\newtheorem{theorem}{Theorem}[section]
\newtheorem{corollary}[theorem]{Corollary}
\newtheorem{lemma}[theorem]{Lemma}
\newtheorem{conj}[theorem]{Conjecture}
\theoremstyle{definition}
\newtheorem{remark}[theorem]{Remark}
\numberwithin{equation}{section}
\newcommand{\RR}{\mathbb{R}}
\newcommand{\NN}{\mathbb{N}}
\newcommand{\ZZ}{\mathbb{Z}}
\newcommand{\cR}{\Pi}
\newcommand{\cV}{\mathcal{V}}
\newcommand{\cN}{\mathcal{N}}
\newcommand{\cW}{\mathcal{W}}
\newcommand{\sB}{\mathscr{B}}
\newcommand{\Bad}{\mathbf{Bad}}
\newcommand{\bx}{\mathbf{x}}
\newcommand{\bv}{\mathbf{v}}
\newcommand{\bw}{\mathbf{w}}
\newcommand{\bP}{\mathbf{P}}
\newcommand{\diag}{\mathrm{diag}}
\newcommand{\dist}{\mathrm{dist}}
\newcommand{\SL}{\mathrm{SL}}
\newcommand{\SO}{\mathrm{SO}}
\newcommand{\Ad}{\mathrm{Ad}}
\newcommand{\Lie}{\mathrm{Lie}}
\newcommand{\fg}{\mathfrak{g}}
\newcommand{\fa}{\mathfrak{a}}
\newcommand{\fk}{\mathfrak{k}}
\newcommand{\fp}{\mathfrak{p}}
\newcommand{\fr}{\mathfrak{r}}
\newcommand{\fl}{\mathfrak{l}}
\newcommand {\ignore}[1]  {}
\newif\ifdraft\drafttrue
\newcommand\hs{homogeneous space}
\newcommand{\rr}{\lambda}
\renewcommand{\emptyset}{\varnothing}
\renewcommand{\setminus}{\smallsetminus}
\begin{document}
	
	\title{Bounded Geodesics on Locally Symmetric Spaces}
	\author{Lifan Guan}
\address{Institute for Theoretical Sciences, School of Science, Westlake University,
600 Dunyu Road, Sandun town, Xihu district, Hangzhou 310030, Zhejiang Province, China.
}
\email{guanlifan@westlake.edu.cn}
\date{}
\author{Chengyang Wu}
\address{School of Mathematical Sciences, Peking University, Beijing, 100871, China
}
\email{chengyangwu@stu.pku.edu.cn}

	\begin{abstract}
    Let $\Gamma$ be a torsion-free subgroup of $\SL_3(\RR)$ commensurable with $\SL_3(\ZZ)$, and $Y=\SO_3(\RR)\backslash \SL_3(\RR)/\Gamma$ be endowed with the natural locally symmetric space structure. We prove that for any point $y\in Y$, the set of directions in which the geodesic ray starting from $y$ is bounded in $Y$, is hyperplane absolute winning.
	\end{abstract}

    \maketitle
	
	\section{Introduction}\label{S:intro}
	
	\subsection{Bounded geodesics on complete Riemannian manifolds}
	
	 Let $M$ be a complete Riemannian manifold, and let $S(M)$ denote its unit tangent bundle, whose fiber $S_p(M)$ over a point $p\in M$ is the unit sphere in $T_p(M)$ centered at the origin. For any $p\in M$ and any $\xi\in S_p(M)$, let $\gamma(p,\xi)$ denote the geodesic ray through the base point $p$ in the direction $\xi$. Of particular interest is the following subset of $S(M)$:
	 $$
	 E^+(\infty):=\{(p,\xi)\in S(M): \gamma(p,\xi) \text{ is bounded in } M\}
	 $$
	and its section over a point $p\in M$:
	$$
	E^+(p,\infty):=E^+(\infty)\cap S_p(M)=\{\xi\in S_p(M): \gamma(p,\xi) \text{ is bounded in } M\}.
	$$
	It is trivial to see that $E^+(\infty)=S(M)$ when $M$ itself is compact. But for a general noncompact complete Riemannian manifold $M$, it is even not clear whether $E^+(\infty)$ is nonempty. One can find in \cite{Sch00} an affirmative answer for a complete Riemannian manifold with $\dim(M)\geq 3$, curvature bounded by $[-b^2,-1]$ and finite volume.  
	
	In fact, one can say more for the special case when $M$ is a locally symmetric space of noncompact type with finite Riemannian volume. Here and hereafter we say that a subset $E$ in a metric space $S$ is \textit{thick} if $\dim(E\cap U)=\dim(U)$ for any open subset $U$ of $S$, where $\dim$ stands for the Hausdorff dimension on $S$. Dani \cite{Da1} proved in 1980s that $E^+(\infty)$ is thick in $S(M)$ for $M$ as above with a constant negative curvature (in particular of rank $1$). A decade later, Aravinda and Leuzinger \cite{Ara1,Ara2} proved that each $E^+(p,\infty)$ has winning property (see $\S2$) when restricted to a nonconstant $C^1$ curve, and hence is thick in $S_p(M)$ for $M$ as above of rank $1$. Similar results were also proved for the Teichmuller geodesic flow on the moduli space of unit-area holomorphic quadratic differentials by Kleinbock and Weiss \cite{KW04} and by Chaika, Cheung and Masur \cite{CCM11}.

    In this paper we first generalize above results as follows:

    \begin{theorem}\label{T:Gen1}
        Let $M$ be a locally symmetric space of non-compact type with finite Riemannian volume. Then for any $p\in M$, the set $E^+(p,\infty)$ is thick in $S_p(M)$.
    \end{theorem}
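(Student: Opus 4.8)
The plan is to convert boundedness of a geodesic ray into precompactness of a forward orbit of a fixed $\RR$‑diagonalizable one‑parameter flow on a homogeneous space, to establish thickness of the corresponding bounded set along a single $K$‑orbit by the Kleinbock--Margulis non‑divergence machinery combined with a Schmidt game, and finally to recover the whole unit sphere $S_p(M)$ by slicing over the directions in a Weyl chamber.

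First I would set up the standard dictionary. Since $M$ has finite volume and is of non‑compact type, its universal cover is a symmetric space $X=K\backslash G$ with $G$ connected semisimple without compact factors and $K$ maximal compact, and $M=K\backslash G/\Gamma$ for a lattice $\Gamma\subset G$. The argument is uniform in $p$, so for notational ease I describe it for $p=o$, the image of $Ke$ (for a general $p$ one simply replaces $e\Gamma$ throughout by $g_0\Gamma$, where $g_0$ is a lift of $p$). Writing $\fg=\fk\oplus\fp$, identify $T_oX$ with $\fp$ and $S_o(M)$ with the unit sphere $S:=\{v\in\fp:\|v\|=1\}$; the geodesic ray from $o$ in a unit direction $v$ is the image of $t\mapsto K\exp(tv)$, and since the projection $\ggm\to M$ is proper, it is bounded in $M$ iff $\{\exp(tv)\Gamma:t\ge 0\}$ is precompact in $\ggm$. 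By Cartan's theorem $v=\Ad(k)H$ with $k\in K$ and $H$ a unit vector in the closed positive Weyl chamber $\overline{\fa^+}$; since $\exp(tv)=k\exp(tH)k^{-1}$ and $K$ is compact, this is equivalent to precompactness of the forward $\exp(tH)$‑orbit of the point $k^{-1}\Gamma\in\ggm$. In particular whether $v=\Ad(k)H$ lies in $E:=E^+(o,\infty)$ depends only on the pair $\big(kZ_K(\fa),H\big)$, and over regular directions the map $(kZ_K(\fa),H)\mapsto\Ad(k)H$ is a diffeomorphism from an open subset of $\big(K/Z_K(\fa)\big)\times(\fa^+\cap S)$ onto an open subset of $S_o(M)$.

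Next I would reduce thickness to a fibrewise statement. Since regular directions are open and dense, any open $U\subseteq S_o(M)$ contains, in the chart above, a product box $V_1\times V_2$ with $V_1$ open in $K/Z_K(\fa)$ and $V_2$ open in $\fa^+\cap S$. I claim that for every regular $H$ the slice $\{\,kZ_K(\fa)\in V_1:\ \Ad(k)H\in E\,\}$ is thick in $V_1$. Granting this, every vertical fibre of $E\cap(V_1\times V_2)$ over a point of $V_2$ has full Hausdorff dimension $\dim V_1$, so by the standard slicing inequality for Hausdorff dimension (if the slices $\{y:(x,y)\in F\}$ have dimension $\ge d$ as $x$ runs over a set of dimension $\ge s$, then $\dim F\ge s+d$) we obtain $\dim\big(E\cap(V_1\times V_2)\big)\ge\dim V_1+\dim V_2=\dim U$; since the reverse inequality is trivial, $\dim(E\cap U)=\dim U$, which is thickness. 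To prove the claim, fix a regular $H$ and write $a^t=\exp(tH)$, acting on $\ggm$ by left translation, with stable horospherical subgroup $U^-=\exp\!\big(\bigoplus_{\alpha(H)<0}\fg_\alpha\big)$ (the $\fg_\alpha$ being the restricted root spaces). By the first paragraph, the set in question is, up to the local diffeomorphism $k\mapsto k^{-1}\Gamma$ and the quotient by $Z_K(\fa)$, the trace on the compact submanifold $Kx_0=\{k\Gamma:k\in K\}\subset\ggm$ (with $x_0=e\Gamma$) of the set of points with precompact forward $a^t$‑orbit. Now $Kx_0$ has tangent space $\fk$ at $x_0$, and since the Cartan involution interchanges $\fg_\alpha$ and $\fg_{-\alpha}$ one has $\fk\cap\Lie(U^-)=0$; hence $Kx_0$ is everywhere transverse to the stable foliation of $a^t$, and the forward flow expands it in the unstable direction. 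For a submanifold of this type, quantitative non‑divergence in the sense of Kleinbock and Margulis together with a Schmidt game played along the unstable direction — exactly as in the rank‑one arguments of Dani \cite{Da1} and of Aravinda and Leuzinger \cite{Ara1,Ara2} — shows that the set of starting points on $Kx_0$ with bounded forward $a^t$‑orbit is winning, hence thick, in $Kx_0$; pulling this back to $K$ and using that it is right $Z_K(\fa)$‑invariant, it descends to a thick subset of $K/Z_K(\fa)$, whose intersection with $V_1$ is thick since thickness is a local property.

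The step I expect to be the main obstacle is precisely this dynamical input in higher rank: that bounded forward orbits of a fixed regular $\exp(tH)$ form a winning (or at least thick) set along a $K$‑orbit. A $K$‑orbit is not an unstable horospherical orbit but only a ``twisted'' copy of one — under $\Ad(\exp(-tH))$ the summand $\fk_\alpha\subset\fk$ is carried onto $\fg_{-\alpha}$ — so one must run the Schmidt game using only the transversality and expansion noted above, which is the technical content already present in the rank‑one case; the non‑divergence side is unproblematic, since $\exp(tH)$ is non‑quasiunipotent for every $H\ne 0$. The remaining points are routine: passing to a connected $M$ and a torsion‑free $\Gamma$, observing that no splitting of $G$ into simple factors is needed because the argument is uniform in $\Gamma$, and invoking the elementary Hausdorff‑dimension slicing inequality.
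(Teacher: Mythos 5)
You have the correct general setup: passing to $K\backslash G/\Gamma$, identifying $S_p(M)$ with $\fp^1$, parametrizing regular directions by $(kZ_K(\fa),H)$ with $H\in\fa^+$, and reducing by Hausdorff slicing to showing that, for each regular $H$, the set of $k$ with $\exp(tH)k g\Gamma$ bounded is thick in $K$. This agrees with the paper up to that point.

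However, there is a genuine gap at the dynamical heart of the argument, and you have flagged it yourself: you assert that bounded forward $\exp(tH)$--orbits along the compact transversal $Kx_0$ form a winning, hence thick, set ``exactly as in the rank-one arguments of Dani and of Aravinda--Leuzinger,'' but you do not carry this out, and it is not a routine transfer. The rank-one Schmidt-game arguments rely on specific structure (a single expansion rate, essentially one unstable direction), and running a Schmidt game on the twisted $K$-orbit in higher rank while controlling non-divergence is precisely the hard part; asserting it follows ``exactly'' does not discharge the burden. The paper sidesteps this entirely: it does \emph{not} try to prove thickness of bounded orbits along the $K$-orbit directly. Instead, using the Iwasawa action of $K$ on $G/B$ and the Bruhat decomposition, it shows that off a finite union of lower-dimensional submanifolds of $K$ there is a projection $\sigma_2\colon K'\to H$ (where $H$ is the unstable horospherical subgroup) defined by $kg=\sigma_1(k)\sigma_2(k)$ with $\sigma_1(k)\in B^-$, and observes that $F^+kg\Gamma$ is bounded iff $F^+\sigma_2(k)\Gamma$ is bounded (because $F^+ B^- F^-$ is bounded). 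This reduces the $K$-slice statement to the statement that $E_H^+([1_G\Gamma],\infty)$ is thick in $H$, which is exactly Corollary 5.5 of Kleinbock--Margulis \cite{KM} (cited in the paper as Theorem~\ref{T:KM1.5}), and then one finishes by Marstrand slicing. In short, you need the reduction from the $K$-orbit to the unstable horospherical subgroup via $\sigma_2$, together with the already-available Kleinbock--Margulis theorem for $H$-orbits, rather than a fresh Schmidt-game argument on the $K$-orbit that you leave unproven.
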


    We remark here that Theorem \ref{T:Gen1} easily follows from Corollary 5.5 in \cite{KM} and Marstrand slicing theorem. See Section \ref{S:pre} for details of the proof. 
    
    The main point of this paper is to consider a stronger property of sets than thickness. In 1960s, Schmidt \cite{Sc1} introduced a certain game played on any metric space, where a winning set for this game is proved to be thick. Later Schmidt's winning property was upgraded to an even stronger hyperplane absolute winning property (abbreviated as HAW) by Broderick, Fishman, Kleinbock, Reich and Weiss \cite{BFKRW} on any Euclidean space, and by Kleinbock and Weiss \cite{KW3} on any $C^1$ manifold. It should be stressed here that for any winning property mentioned above, it has the advantage that any countable intersection of winning sets is still winning. See Section \ref{S:2} for all definitions and discussions, and \cite{An1,An2,ABV,AGGL,AGK,AGK2,BPV,Be,BNY1,BNY2,BBFKW,BFK,BFS,KW2,KW1,NS} for other recent results involving winning properties of exceptional sets in Diophantine approximations and dynamical systems.

    In this terminology we raise a stronger conjecture than Theorem \ref{T:Gen1}:

    \begin{conj}\label{C:Gen1}
        Let $M$ be a locally symmetric space of non-compact type with finite Riemannian volume. Then for any $p\in M$, the set $E^+(p,\infty)$ is HAW on $S_p(M)$.
    \end{conj}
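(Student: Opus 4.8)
The plan is to recast the conjecture in homogeneous dynamics and then run the hyperplane absolute game on the sphere $S_p(M)$. Write $M=\Gamma\backslash X$ with $X=G/K$ the symmetric space, $G$ semisimple of non-compact type, $K$ maximal compact, $\Gamma\le G$ a lattice; identify $T_oX\cong\fp$ at $o=eK$, so after lifting $p$ to a fixed $x_0\in\Gamma\backslash G$ the space $S_p(M)$ becomes a round sphere $S(\fp)$. Every geodesic ray is an orbit of a one-parameter diagonalizable flow: for $\xi\in S(\fp)$ write $\xi=\Ad(k)H$ with $k\in K$ and $H$ in the intersection of the closed positive Weyl chamber $\overline{\fa^+}$ with the unit sphere, and then $\gamma(p,\xi)$ is bounded in $M$ iff $\{x_0\,k\exp(tH):t\ge 0\}$ is bounded in $\Gamma\backslash G$ (right translation by the fixed $k^{-1}$ does not affect boundedness). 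By reduction theory $\Gamma\backslash G$ has finitely many ends, each carrying a proper ``height'' function $\Delta_i$ — in the arithmetic case the representation-theoretic heights attached to the $\QQ$-parabolic subgroups — and the orbit is bounded iff $\sup_{t\ge 0}\Delta_i\!\left(x_0\,k\exp(tH)\right)<\infty$ for every $i$. Thus, in the coordinates $(k,H)$, the set $E^+(p,\infty)$ is a countable intersection, over $i$ and over a discrete set of scales, of explicit sets.

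By the standard formalism of the game — locality and invariance under $C^1$ diffeomorphisms, stability under countable intersections, and the fact that proper $C^1$ submanifolds are HAW-negligible \cite{BFKRW,KW3} — two preliminary reductions are available. First, the singular directions, i.e. those $\xi=\Ad(k)H$ with $H$ on a wall, form $\bigcup_F \Ad(K)\cdot\mathrm{relint}(F)$ over the finitely many proper faces $F$ of the chamber, a finite union of $C^1$ submanifolds of positive codimension; hence it suffices to prove that $E^+(p,\infty)$ meets the open dense regular locus $S^{\mathrm{reg}}(\fp)$ in a set that is HAW on $S^{\mathrm{reg}}(\fp)$. Second, over $S^{\mathrm{reg}}(\fp)$ the map $(k,H)\mapsto\Ad(k)H$ is a submersion (a local diffeomorphism after quotienting the $M'$-stabilizer, since for regular $H$ the orbit directions $\Ad(\fk)H$ together with $T_H$ of the sphere in $\fa$ span $T_\xi S(\fp)$), so, working in adapted $C^1$ charts, one reduces to the following local statement: for $H_0$ in the open chamber and the fixed $x_0$, the set of parameters $(k,H)$ near $(e,H_0)$ for which $\{x_0\,k\exp(tH):t\ge 0\}$ is bounded is HAW in a Euclidean ball of dimension $d=\dim S(\fp)$, with all constants uniform as $H_0$ ranges over compacta in the open chamber. (If $G$ has rank-one factors, their contribution is governed by the winning/HAW phenomena for bounded geodesics on rank-one spaces, cf. \cite{Ara1,Ara2,BFKRW}; the essentially new content is a factor of real rank $\ge 2$, where by the Margulis arithmeticity theorem the ambient lattice is arithmetic and the $\Delta_i$ are the representation-theoretic heights.)

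The heart of the argument is an inductive strategy for Alice in the hyperplane absolute game on the $d$-ball, in the style of \cite{KW3} and of the rank-two argument underlying the main theorem of this paper, and of its homogeneous analogues \cite{AGK,AGGL}. At level $n$ Bob presents a ball $B_n$ and removes a neighborhood of a hyperplane of relative width $\beta\rho_n$; Alice must produce a sub-ball $B_{n+1}\subset B_n$ of controlled relative radius, disjoint from Bob's neighborhood and from the ``level-$n$ bad set'' — the parameters $(k,H)$ for which some $\Delta_i\!\left(x_0\,k\exp(tH)\right)$ already exceeds a threshold $R_n$ at some time $t\le T_n$. The key analytic input is that each level-$n$ bad set is a union, over boundedly many times $t$, of a sublevel set of a function of $(k,H)$ that is $(C,\alpha)$-good in the sense of Kleinbock–Margulis with $C,\alpha$ uniform, and that these functions do not concentrate near any affine hyperplane; this non-concentration is exactly where regularity of $H$ enters, via the fact that then the expanding horospherical subgroup of $\exp(tH)$ is the full unstable one and the orbit map $t\mapsto x_0\,k\exp(tH)$ depends on $(k,H)$ transversally enough to all hyperplanes. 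Granted such goodness and non-concentration, a ball minus a hyperplane neighborhood cannot be swallowed by the level-$n$ bad set, so Alice can always move; iterating and using stability of HAW under the countable intersection over $i$ and $n$ gives the local statement, hence the conjecture.

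The step I expect to be the genuine obstacle is the uniform higher-rank non-concentration/goodness estimate invoked in the previous paragraph. In real rank $\ge 2$ one must control several heights $\Delta_i$ simultaneously, and as the regular direction $H$ moves inside the open chamber toward a wall the decomposition of the unstable horospherical subgroup into $\exp(tH)$-eigenspaces — and the corresponding expansion rates — degenerates; keeping the $(C,\alpha)$-good constants and, above all, the transversality-to-hyperplanes estimates uniform through this degeneration, and coupling them across all $\Delta_i$ at once, is precisely what is not currently available beyond the rank-one case and the rank-two ($\SL_3$) case established here. A resolution would presumably proceed through a ``hyperplane-quantitative'' strengthening of the nondivergence estimates of \cite{KM} fused with the $(C,\alpha)$-good formalism and carried out uniformly over the Weyl chamber; this technical gap is what keeps the statement a conjecture.
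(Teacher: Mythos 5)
You have not proved the statement, and you say as much yourself: the final paragraph concedes that the uniform higher-rank non-concentration/goodness estimate on which your game strategy rests ``is precisely what is not currently available.'' That is indeed the gap, and it is why the statement appears in the paper as Conjecture~\ref{C:Gen1} rather than a theorem: the paper proves it only for $G=\SL_3(\RR)$ with $\Gamma$ commensurable with $\SL_3(\ZZ)$ (Theorem~\ref{T:main1}), and in general proves only thickness (Theorem~\ref{T:Gen1}, via \cite{KM} and Marstrand slicing). Your reductions up to that point are broadly the same as the paper's in the special case --- pass to $G/\Gamma$, discard a finite union of lower-dimensional submanifolds of singular directions (cf.\ Lemma~\ref{L:HAW-open}), and work in coordinates (direction parameter, expanding horospherical coordinate), so they buy nothing new; the entire content of the conjecture is concentrated in the step you leave open.

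Two further cautions about the program as you state it. First, the missing ingredient is not merely a quantitative uniformity of $(C,\alpha)$-good/non-divergence estimates over compacta in the open Weyl chamber: as Remark~\ref{r:slice} emphasizes, HAW of every slice (known for $\SL_3$ from \cite{AGK}) does not formally imply HAW of the set fibered over the direction parameter. What the paper actually does in the $\SL_3$ case is re-open the strategy of \cite{AGK} and show that Alice's hyperplane choices can be taken locally constant in the weight $\lambda$ (Lemmas~\ref{L:main1} and~\ref{L:main2}, feeding into Theorem~\ref{T:ru}); this is a structural statement about the strategy itself, and your sketch contains no mechanism producing such parameter-independent hyperplanes in general rank. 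Second, your disposal of rank-one factors by citing \cite{Ara1,Ara2} overreaches: those papers give Schmidt winning (on curves), not the HAW property, so even that reduction step is not currently covered by the literature; likewise your ``bounded iff all heights bounded'' step presupposes arithmeticity, which fails for some rank-one and reducible lattices and would need a separate treatment.
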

    
	Now we briefly review some basic facts concerning locally symmetric spaces. Let $M$ be a locally symmetric space of non-compact type, and let $\widetilde{M}$ denote its universal cover. The isometry group $G$ of $\widetilde{M}$ has finitely many connected components, whose identity component is a connected semisimple Lie group without compact factors and with trivial center. Fix a point $p_0\in M$, and let $\widetilde{p_0}\in\widetilde{M}$ be a preimage of $p_0$. The stabilizer $K:=\mathrm{Stab}_G(\widetilde{p_0})$ is a maximal compact subgroup of $G$. We identify the globally symmetric space $\widetilde{M}$ with $K\backslash G$, and view the fundamental group $\Gamma:=\pi_1(M)$ as a torsion-free discrete subgroup of $G$ via deck transformations. Then $M$ can be identified with $K\backslash G/\Gamma$. Conversely, given any semisimple Lie group $G$ without compact factors and with finite center, any maximal compact subgroup $K\subset G$ and any torsion-free discrete subgroup $\Gamma\subset G$, there exists a unique locally symmetric space structure on $K\backslash G/\Gamma$. Moreover, $M$ has finite Riemannnian volume if and only if $\Gamma$ is a lattice in $G$, and the rank of $M$ is the rank of $G$.
	

	In this paper we verify Conjecture \ref{C:Gen1} for a special kind of locally symmetric space of non-compact type with finite Riemannian volume and of rank $2$.

	\begin{theorem}\label{T:main1}
    Let $G=\SL_{3}(\RR),K=\SO_3(\RR)$, $\Gamma\leq G$ be a torsion-free subgroup commensurable with $\SL_3(\ZZ)$, and $Y=K\backslash G/\Gamma$ be endowed with the above natural locally symmetric space structure. Then Conjecture \ref{C:Gen1} holds for $Y$, that is, for any $y\in Y$, the set $E^+(y,\infty)$ is HAW on $S_y(Y)$.
	\end{theorem}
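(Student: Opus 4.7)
The plan is to parametrize $S_y(Y)\cong S^4$ via a Cartan decomposition of the tangent space, establish a fiberwise HAW statement for each fixed direction $H_0$ in the Weyl chamber, and then combine these slices into the desired global HAW property on $S_y(Y)$, exploiting the one-dimensionality of the Weyl chamber arc.

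\emph{Setup.} Lift $y$ to $g_0\Gamma\in X:=G/\Gamma$ and identify $T_yY$ with $\fp$, the space of symmetric trace-zero $3\times 3$ real matrices. Each $\xi\in S_y(Y)\subset\fp$ admits a Cartan decomposition $\xi=\Ad(k)H_0$ with $k\in K$ and $H_0$ a unit vector in the closed positive Weyl chamber $\overline{\fa^+}\subset\fa$. Write $C:=\overline{\fa^+}\cap S^1$ for the resulting closed arc in the two-dimensional space $\fa$. The surjection $\Phi:K\times C\to S_y(Y)$, $(k,H_0)\mapsto\Ad(k)H_0$, is a submersion on the regular locus. The geodesic ray $\gamma(y,\xi)$ equals the projection to $Y$ of $\{g_0 k\exp(tH_0)\Gamma\}_{t\geq 0}\subset X$; since $K$ is compact, it is bounded in $Y$ if and only if this $X$-orbit is relatively compact.

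\emph{Fiberwise HAW.} For each $H_0\in C$ set
$$E_{H_0}:=\bigl\{k\in K:\{g_0 k\exp(tH_0)\Gamma\}_{t\geq 0}\text{ is relatively compact in }X\bigr\}.$$
For regular $H_0$, Dani's correspondence translates relative compactness into a weighted badly-approximable condition on the horospherical coordinates of $g_0k$ in the Iwasawa decomposition, with weights determined by $H_0$. Adapting the HAW results for weighted badly-approximable sets (of An--Guan--Kleinbock type) together with the $C^1$-manifold HAW framework of Kleinbock--Weiss, I would prove $E_{H_0}$ is HAW on the compact Lie group $K=\SO_3(\RR)$, with winning strategies that vary continuously in $H_0$ over compact subsets of the interior of $C$. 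For the two endpoints of $C$ (Weyl walls), the flow $\exp(tH_0)$ reduces to a one-parameter diagonal subgroup of a proper Levi, and HAW then follows from classical rank-one techniques in the spirit of Aravinda--Leuzinger, combined with a fibration argument over the remaining diagonal coordinate.

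\emph{Assembly, and the main obstacle.} To upgrade the fiberwise HAW to HAW of $E^+(y,\infty)$ on $S_y(Y)$, I would design Alice's strategy in the HAW game on $S_y(Y)\cong S^4$ to interleave two phases. In the first phase, using the one-dimensionality of $C$ and finitely many hyperplane moves, she localizes Bob's ball in the $H_0$-coordinate to a short sub-arc around some $H_0^\ast\in C$. In the second phase, she deploys the HAW strategy for $E_{H_0^\ast}$ in the $K$-coordinate, absorbing the small residual $H_0$-variation via uniformity of the fiber strategies. Singular directions, which form a closed positive-codimension submanifold of $S_y(Y)$, are treated by an auxiliary game adapted to the degenerate flow and then merged with the regular case using the closure of HAW under countable intersections and $C^1$ diffeomorphisms. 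The hardest step is this assembly: HAW is not preserved under arbitrary fibrations, so Alice must simultaneously control base and fiber coordinates, which in turn requires a truly uniform-in-$H_0$ HAW statement for $E_{H_0}$ across the interior of the Weyl chamber. A secondary technical obstacle is extending weighted HAW theorems uniformly to arbitrary $H_0\in C$, since the Diophantine weights vary continuously over the chamber and existing results are typically phrased for a single fixed weight.
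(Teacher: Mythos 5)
Your outline reproduces the paper's high-level architecture: regular directions are parametrized by a weight in the open Weyl chamber times $K$ (the paper uses $(\lambda,k)\in(1/2,1)\times K\mapsto \Ad(k^{-1})\diag(\lambda,1-\lambda,-1)$, then passes from $K$ to the unipotent group $U$ via Bruhat/Iwasawa), the fiberwise input is the An--Guan--Kleinbock HAW theorem for a fixed weight, and the whole difficulty is uniformity in the weight. Two side remarks: singular directions need no ``auxiliary game'' at all --- they form a finite union of lower-dimensional submanifolds and are discarded by Lemma \ref{L:HAW-open} (and note that closure under countable \emph{intersections} shrinks sets, so it cannot ``merge'' the regular and singular cases); also, Aravinda--Leuzinger give Schmidt winning, not HAW, so the claim for the walls would be shaky even if it were needed.

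The genuine gap is the assembly step, which you yourself identify as the hardest point but do not supply. In the hyperplane game the weight coordinate is never pinned down: at every stage Bob's ball has $H_0$-extent comparable to its radius, so a ``first localize, then play the strategy for a fixed $H_0^\ast$'' scheme cannot work literally, and ``strategies varying continuously in $H_0$'' is neither a precise statement nor sufficient, since the weighted Diophantine sets (with exponents $q^{1+\lambda}$, $q^{2-\lambda}$) move with $\lambda$ and Alice's single hyperplane-neighborhood move must kill the dangerous sets for \emph{every} weight in the current interval. The paper's Remark \ref{r:slice} stresses precisely that slicewise HAW does not imply HAW of the total set; the real content is Theorem \ref{T:ru}, proved via the potential game and Lemmas \ref{L:main1}--\ref{L:main2}, where for each ball $\Omega$ in the product space $\Pi\times\RR^3$ one constructs a \emph{single} affine plane $L_k(\Omega)$ (built from integer vectors $\bw(\Omega,\bv)$ chosen by a Minkowski-type minimization) whose neighborhood covers $\Delta(\bv,\epsilon)\cap\Omega'$ for all relevant rationals $\bv$ and all weights in $I_\Omega$ --- i.e.\ the strategy is shown to be locally constant in the parameter, using quantitative comparisons such as $q^{|\lambda-\lambda_\Omega|}\le 3$ at the relevant scales, the orthogonality and cross-product arguments of Lemmas \ref{L:inequ}--\ref{L:const}, and a separate degenerate case in which all dangerous vectors coincide and the plane is adapted to $\Delta(\bv_0,\epsilon)$ itself. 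None of this uniform-in-weight mechanism is present in your proposal, so the proof is incomplete exactly at its decisive step.
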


	
	
	\subsection{Relations between homogeneous dynamics and geometry}
	\label{S: homo and geo}
	Theorem \ref{T:main1} can also be reinterpreted from the view of homogeneous dynamics. Let $G$ be a semisimple Lie group without compact factors and with finite center, $\Gamma\subseteq G$ be a nonuniform torsion-free lattice, and $X=G/\Gamma$ be the corresponding homogeneous space. Then for any maximal compact subgroup $K$ of $G$, there exists a proper projection $$
	X=G/\Gamma\to Y=K\backslash G/\Gamma,
	$$
	which realizes a locally symmetric space as the quotient of a homogeneous space with compact fiber. 
	
	Now write $\fg=\Lie(G)$. There exists a Cartan involution $\theta$ with respect to the Killing form of $\fg$, which induces a Cartan decomposition $\fg=\fk\oplus\fp$. Here $\fk$ and $\fp$ are the $1$ and $-1$ eigenspaces of $\theta$ respectively. The involution $\theta$ also extends to an involutive automorphism $\widetilde{\theta}$ of $G$. Let $K=G_{\widetilde{\theta}}$ denote the subgroup of all $\widetilde{\theta}$-fixed elements in $G$. Then $K$ is a maximal compact subgroup of $G$, and there is a diffeomorphism $$
	K\times \fp\to G,\quad (k,X)\mapsto k\cdot\exp(X).
	$$
	It follows that the differential at $1_G$ of the projection $G\to K\backslash G$ gives a linear isomorphism $$
	\fp\to T_{[K 1_G]}(K\backslash G).
	$$
	By homogeneity the tangent space of each point $[Kg]\in K\backslash G$ is isomorphic to $\fp$. Moreover, the Riemannian metric on it can be seen as the $\Ad(K)$-invariant inner product on $\fp$. Then the unit tangent space equals $\fp^1:=\{\bv\in\fp: \|\bv\|=1\}$.
	
	For each base point $[Kg]\in K\backslash G$ and each direction $\xi=(\mathrm{d}R_g)(\bv)\in T_{[Kg]}(K\backslash G)$ with $\bv\in\fp^1$, the associated geodesic ray on $K\backslash G$ is given by $$
	\widetilde{\gamma}([Kg],\xi)=\{K\exp(t\bv)g: t\geq 0\}.
	$$
	Under the canonical covering map $K\backslash G\to K\backslash G/\Gamma$, the tangent space of each point $y=[Kg\Gamma]\in Y$ can be identified with that of $[Kg]\in K\backslash G$, and each geodesic ray in $Y$ through the base point $y$ in the direction $\xi$ can be written as $$
	\gamma(y,\xi)=\{K\exp(t\bv)g\Gamma: t\geq 0\}.
	$$
	It follows that 
	\begin{equation*}
		\begin{aligned}
			E^+(y,\infty)&=\{\xi\in S_y(Y): \gamma(y,\xi) \text{ is bounded in }Y\}\\
			&\cong\{\bv\in\fp^1: \{K\exp(t\bv)g\Gamma: t\geq 0\} \text{ is bounded in }Y\}\\
			&=\{\bv\in\fp^1: \{\exp(t\bv)g\Gamma: t\geq 0\} \text{ is bounded in }X\}.\\
		\end{aligned}
	\end{equation*}
	This reduces Theorem \ref{T:main1} into the framework of homogeneous dynamics. Since any countable intersection of HAW sets is still HAW and hence thick in the ambient space, we immediately get the following corollary of Theorem \ref{T:main1}:

    \begin{corollary}\label{C:main1}
    Let $G,\Gamma,K,\fp,Y$ be as in Theorem \ref{T:main1}. Then for any countably many points $\{y_n=[Kg_n\Gamma]\}_{n\in\NN}$ in $Y$, the set 
    $$\{\bv\in\fp^1:\{K\exp(t\bv)g_n\Gamma:t\geq 0\} \text{ is bounded in }Y \text{ for all }n\in\NN\}$$	
    is thick in $\fp^1$.
    \end{corollary}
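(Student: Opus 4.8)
The plan is to deduce Corollary~\ref{C:main1} directly from Theorem~\ref{T:main1}, exploiting the two structural features of the HAW property recalled in Section~\ref{S:2}: that it is preserved under $C^1$ diffeomorphisms (cf.\ \cite{KW3}) and that it is stable under countable intersections.

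First I would transport everything to the fixed model sphere $\fp^1$. Fix $n\in\NN$. As explained in Section~\ref{S: homo and geo}, the representative $g_n$ of $y_n=[Kg_n\Gamma]$ determines a smooth diffeomorphism $\iota_n\colon\fp^1\to S_{y_n}(Y)$ — namely $\bv\mapsto(\mathrm{d}R_{g_n})(\bv)$, followed by the identification $T_{[Kg_n]}(K\backslash G)\cong T_{y_n}(Y)$ coming from the covering map $K\backslash G\to Y$ — and under $\iota_n$ one has
\[
\iota_n^{-1}\bigl(E^+(y_n,\infty)\bigr)=W_n:=\bigl\{\bv\in\fp^1:\{\exp(t\bv)g_n\Gamma:t\geq 0\}\text{ is bounded in }X\bigr\}.
\]
Since the projection $X=G/\Gamma\to Y=K\backslash G/\Gamma$ is proper, the orbit segment $\{K\exp(t\bv)g_n\Gamma:t\ge 0\}$ is bounded in $Y$ if and only if $\{\exp(t\bv)g_n\Gamma:t\ge 0\}$ is bounded in $X$; hence the set whose thickness is asserted in the corollary is exactly $W:=\bigcap_{n\in\NN}W_n\subseteq\fp^1$.

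Next I would invoke Theorem~\ref{T:main1}: $E^+(y_n,\infty)$ is HAW on the ($C^1$, indeed real-analytic) manifold $S_{y_n}(Y)$. Transporting along $\iota_n$ and using the diffeomorphism-invariance of HAW shows that $W_n$ is HAW on $\fp^1$ for every $n\in\NN$. The countable-intersection property of HAW sets then yields that $W=\bigcap_n W_n$ is HAW on $\fp^1$, and since every HAW set is winning and therefore thick, $W$ is thick in $\fp^1$, which is the claim.

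The argument is essentially formal once Theorem~\ref{T:main1} is in hand; the one point deserving care is that the identifications $S_{y_n}(Y)\cong\fp^1$ depend on the representatives $g_n$, so a priori the various $W_n$ are pulled back to $\fp^1$ through different diffeomorphisms $\iota_n$. This is harmless precisely because HAW is an intrinsic, diffeomorphism-invariant feature of the $C^1$-structure on $\fp^1\cong S^4$ that is moreover closed under countable intersections — the two properties highlighted as the main virtues of the HAW framework — so I do not expect any real obstacle beyond invoking these facts correctly.
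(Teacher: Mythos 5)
Your argument is correct and coincides with the paper's own: for each $n$, pull back $E^+(y_n,\infty)$ to $\fp^1$ via the $g_n$-dependent identification $S_{y_n}(Y)\cong\fp^1$, invoke Theorem~\ref{T:main1} together with the $C^1$-invariance of HAW to see that each $W_n$ is HAW on $\fp^1$, and then use closure of HAW under countable intersections and the fact that HAW implies thick. This is precisely the one-line deduction indicated in the paper at the end of Section~\ref{S: homo and geo}; your write-up simply makes the intermediate diffeomorphisms explicit.
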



	One can also consider all bounded diagonal rays through a point on the homogeneous space $X=G/\Gamma$. 
    Let $$
    \bP^+(\fg):=(\mathfrak{g}\setminus \{0\})/\sim,\quad \text{ where } \bv\sim\bv' \text{ if and only if } \bv\in \RR^+\bv'
    $$ denote the positive projective space of $\fg$. For each point $x\in X$, we write 
	$$ E^+(x,\infty):=\{[\bv]\in\bP^+(\fg): \{\exp(t\bv)x: t\geq 0\} \text{ is bounded in }X\}.
		$$
    The counterparts of Theorem \ref{T:Gen1}, Conjecture \ref{C:Gen1}, Theorem \ref{T:main1} and Corollary \ref{C:main1} are as follows:

    \begin{theorem}\label{T:Gen2}
        Let $G$ be a Lie group, $\Gamma\subseteq G$ be a lattice, and $X=G/\Gamma$ be the corresponding homogeneous space. Then for any $x\in X$, the set $E^+(x,\infty)$ is thick in $\bP^+(\fg)$.
    \end{theorem}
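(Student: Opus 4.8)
The plan is to pull the question back to the Lie algebra and feed it to the same Kleinbock--Margulis estimate that underlies Theorem~\ref{T:Gen1}, the one new feature being that, working with all of $\fg$ rather than only with $\fp$, one must also accommodate the quasiunipotent directions. For $x\in X$ set
\[
\mathcal{C}(x):=\bigl\{\bv\in\fg\setminus\{0\}:\ \{\exp(t\bv)x:t\ge0\}\ \text{is bounded in}\ X\bigr\}.
\]
Replacing $\bv$ by $s\bv$ with $s>0$ only reparametrizes the trajectory, so $\mathcal{C}(x)$ is a positive cone and $E^+(x,\infty)$ is exactly its image under the projection $\pi\colon\fg\setminus\{0\}\to\bP^+(\fg)$. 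Identifying $\fg\setminus\{0\}$ radially with $(0,\infty)\times\bP^+(\fg)$ carries $\mathcal{C}(x)$ to $(0,\infty)\times E^+(x,\infty)$; hence, by the product formula for Hausdorff dimension (the analogue here of the appeal to Marstrand's slicing theorem in the proof of Theorem~\ref{T:Gen1}), $E^+(x,\infty)$ is thick in $\bP^+(\fg)$ if and only if $\mathcal{C}(x)$ is thick in $\fg$. So it suffices to show $\dim(\mathcal{C}(x)\cap U)=\dim\fg$ for every nonempty open $U\subseteq\fg$.

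Let $\mathcal{B}\subseteq\fg$ be the set of non-quasiunipotent directions, i.e.\ those $\bv$ for which $\Ad(\exp\bv)$ has an eigenvalue off the unit circle, equivalently $\mathrm{ad}(\bv)$ has an eigenvalue of nonzero real part; then $\mathcal{B}$ is open and is a positive cone. Fix a nonempty open $U\subseteq\fg$. If $U\cap\mathcal{B}\neq\emptyset$, apply Corollary~5.5 of \cite{KM} to the non-quasiunipotent one-parameter subgroups $\{\exp(t\bv)\}$ as $\bv$ ranges over the nonempty open set $U\cap\mathcal{B}$ — this is precisely the input used for Theorem~\ref{T:Gen1}, now with $\fg$ in place of $\fp$ — to conclude that $\{\bv\in U\cap\mathcal{B}:\{\exp(t\bv)x:t\ge0\}\ \text{bounded}\}$ has full Hausdorff dimension $\dim\fg$, so that $\dim(\mathcal{C}(x)\cap U)=\dim\fg$. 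This case is an application of known machinery and presents no difficulty.

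The remaining case is $U\cap\mathcal{B}=\emptyset$; since $\mathcal{B}$ is open this forces $U\cap\overline{\mathcal{B}}=\emptyset$, so $\mathrm{ad}(\bv)$ has purely imaginary spectrum for every $\bv\in U$, and this persists on the open set $U$. I would argue that then $\{\exp(t\bv)x:t\ge0\}$ is relatively compact in $X$ for \emph{every} $\bv\in U$, whence $U\subseteq\mathcal{C}(x)$ and the required bound is trivial. The mechanism: because the spectral condition holds on an open set and $G$ is unimodular (it admits the lattice $\Gamma$), $\mathrm{ad}(\bv)$ is in fact semisimple, hence $\Ad\bigl(\overline{\exp(\RR\bv)}\bigr)$ is relatively compact; consequently $\overline{\exp(\RR\bv)}$ is a relatively compact extension of a subgroup of the connected centre $Z(G)^{\circ}$, and since $\Gamma\cap Z(G)^{\circ}$ is a lattice in $Z(G)^{\circ}$ (standard structure theory for lattices), the orbit stays in a compact subset of $X$. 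One could alternatively invoke the general form of \cite[Cor.~5.5]{KM}, which should already subsume this ``distal'' regime. Together with the previous paragraph this exhausts all $U$, so $\mathcal{C}(x)$ is thick in $\fg$ and therefore $E^+(x,\infty)$ is thick in $\bP^+(\fg)$.

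The hard part, and the only step that goes beyond the cone reduction and the citation, is the quasiunipotent case: off the non-quasiunipotent locus there is no contracting direction along which to run a Schmidt/Kleinbock--Margulis game, so boundedness of the orbit cannot come from hyperbolic dynamics and must instead be read off from the structure of $X=G/\Gamma$ — finiteness of volume, unimodularity, relative compactness of the elliptic one-parameter subgroups, and cocompactness of $\Gamma$ in the centre. Making this structural bookkeeping airtight — or, more economically, checking that it is already contained in the precise statement of \cite[Cor.~5.5]{KM} — is where the actual care is needed; everything else mirrors the proof of Theorem~\ref{T:Gen1}.
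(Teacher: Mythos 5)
The cone reduction in your first paragraph is essentially correct and matches the start of the paper's argument. But the substance of your proof is the sentence \emph{``apply Corollary~5.5 of \cite{KM} \ldots to conclude that $\{\bv\in U\cap\mathcal{B}:\ldots\}$ has full Hausdorff dimension''}, and this step does not follow from the cited result. Theorem~\ref{T:KM1.5} (i.e.\ KM Cor.~5.5 as used in the paper) is a statement about a \emph{fixed} one-parameter $\Ad$-diagonalizable flow $F$: it says the set $\{h\in H: F^+hx \text{ bounded}\}$ is thick in the unstable horospherical subgroup $H$, for varying starting point but fixed direction. It says nothing, as stated, about a set of \emph{directions} $\bv$ with a fixed base point having full dimension. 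Converting between the two is precisely the content of the paper's proof and is far from a black-box application. What is actually done there: (a) a Levi-decomposition reduction to the case where $G$ is connected semisimple with trivial center and no compact factors, using that the projection $X\to X_{nc}=L_{nc}/q(\Gamma)$ has compact fiber, so boundedness upstairs is equivalent to boundedness downstairs, and then Marstrand's slicing theorem descends the dimension count to $\fl_{nc}$; (b) a parametrization of a dense open subset of $\fg$ by $\theta:\fa_+\times H\times L\times H^-\to\fg$, $(\bv,h_1,h_2,h_3)\mapsto\Ad(h_1^{-1}h_2^{-1}h_3^{-1})\bv$, via the Bruhat decomposition, where $\fa_+$ is a fixed open Weyl chamber and $H$, $H^-$, $L$ are the corresponding horospherical data (so all the relevant directions are $\Ad$-diagonalizable as the theorem requires); (c) the orbit identity showing that $F_{\Ad(g^{-1})\bv}^+\Gamma$ is bounded iff $F_\bv^+g\Gamma$ is bounded, and then the horospherical cancellation $F_\bv^+h_3h_2h_1\Gamma$ bounded iff $F_\bv^+h_1\Gamma$ bounded; and (d) Theorem~\ref{T:KM1.5} on the $H$-factor plus Marstrand slicing on the product. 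All of (a)--(d) is missing from your account, and it is exactly where the work is.

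Your handling of the complementary case ($U$ disjoint from the non-quasiunipotent cone $\mathcal{B}$) also contains a false inference: purely imaginary $\mathrm{ad}(\bv)$-spectrum for all $\bv$ in an open set does not imply that each $\mathrm{ad}(\bv)$ is semisimple (any nilpotent Lie algebra is a counterexample; unimodularity of $G$ does not rescue this), so the chain ``semisimple $\Rightarrow$ relatively compact $\Ad$-image $\Rightarrow$ bounded orbit'' has a broken first link. The paper sidesteps this case entirely: after the Levi reduction, the compactness of the fiber of $X\to X_{nc}$ absorbs the quasiunipotent and compact directions, and within $\fl_{nc}$ one only ever works on the parametrized open set $\mathrm{Im}\,\theta$, where everything is conjugate into the Weyl chamber and Theorem~\ref{T:KM1.5} applies. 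In short: your outline is right that KM Cor.~5.5 is the engine, but the reduction from ``set of directions'' to ``set of horospherical starting points,'' together with the Levi/Cartan/Bruhat bookkeeping that makes it legitimate, is the proof, not a routine appeal.
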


    \begin{conj}\label{C:Gen2}
        Let $G$ be a Lie group, $\Gamma\subseteq G$ be a lattice, and $X=G/\Gamma$ be the corresponding homogeneous space. Then for any $x\in X$, the set $E^+(x,\infty)$ is HAW on $\bP^+(\fg)$.
    \end{conj}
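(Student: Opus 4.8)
The plan is to extend the method behind Theorem~\ref{T:main1} --- a blend of Kleinbock--Margulis height-function estimates with the hyperplane absolute game \cite{BFKRW,KW3} --- and to peel the problem down, by successive reductions, to a statement about bounded trajectories of $\RR$-split one-parameter flows. The first reduction is to the case that $G$ is semisimple: by the Levi decomposition $\fg=\frad\oplus\fl$ and the structure theory of lattices in Lie groups, $X=G/\Gamma$ should fibre properly over a homogeneous space of the semisimple quotient with compact fibre, so that boundedness of $\{\exp(t\bv)x\}$ in $X$ is governed by the projected trajectory; the content of the conjecture thus lives in the semisimple case, and one may take $G$ semisimple with trivial centre and no compact factors.

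The second reduction concerns the set of directions. Writing the Jordan decomposition $\bv=\bv_{e}+\bv_{h}+\bv_{n}$ of $\bv\in\fg$ into commuting elliptic, $\RR$-split and nilpotent parts, the locus $\{\bv:\bv_{n}\neq 0\}$ is a proper subvariety of $\fg$, hence contained in a countable union of $C^{1}$ hypersurfaces, whose complement is HAW \cite{BFKRW,KW3}. Since HAW is stable under countable intersections and under passage to supersets, it suffices to prove that the set
$$
D(x):=\bigl\{\,[\bv]\in\bP^{+}(\fg):\ \{\exp(t\bv_{h})x:t\geq 0\}\ \text{is bounded in }X\,\bigr\},
$$
with $\bv_{h}$ the $\RR$-split part of $\bv$, is HAW on $\bP^{+}(\fg)$: indeed, for semisimple $\bv=\bv_{e}+\bv_{h}$ the elliptic factor $\exp(t\bv_{e})$ stays in a compact subgroup, so $\{\exp(t\bv)x\}$ is bounded exactly when $\{\exp(t\bv_{h})x\}$ is, whence $D(x)$ intersected with the complement of $\{\bv:\bv_{n}\neq 0\}$ is an HAW subset of $E^{+}(x,\infty)$ (and purely elliptic directions, having $\bv_{h}=0$, belong to $D(x)$ automatically).

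For this core statement the plan is to run the Kleinbock--Weiss scheme. One fixes a proper height function $u\colon X\to[1,\infty)$ of Kleinbock--Margulis type, obeying a contraction inequality along every $\RR$-split flow, and plays the hyperplane absolute game on the $C^{1}$ manifold $\bP^{+}(\fg)$, for which HAW is defined in \cite{KW3}. When Bob hands Alice a small ball $B\subset\bP^{+}(\fg)$, one invokes the contraction inequality at the time-scale dictated by the radius of $B$: this forces those directions $[\bv]\in B$ whose trajectory has climbed above a prescribed height by that time to lie within a thin neighbourhood of a single hyperplane of $\bP^{+}(\fg)$ --- determined by the unstable horospherical directions of $\exp(t\bv_{h})$ --- which Alice then removes. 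A Borel--Cantelli-type bookkeeping over all scales and all height thresholds then shows that the direction surviving the game has a bounded trajectory, so $D(x)$ is HAW. For arithmetic $\Gamma$, say $G=\SL_{n}(\RR)$ and $\Gamma=\SL_{n}(\ZZ)$, boundedness of the $\RR$-split ray translates into a weighted badly approximable condition on the lattice $g\ZZ^{n}$ with weights read off from $\bv_{h}$, and one may instead try to assemble the HAW property of $D(x)$ from the known HAW results for badly approximable systems of linear forms with fixed weights, patched over the Weyl chamber and over the conjugation parameter.

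The main obstacle --- and the reason Conjecture~\ref{C:Gen2} remains open past the present range --- is the uniformity of this concentration step as the $\RR$-split direction $\bv_{h}$ degenerates. On the walls of the Weyl chamber the decomposition of $\fg$ into stable, unstable and neutral subspaces for $\exp(t\bv_{h})$ jumps, eigenvalue multiplicities collapse, and the hyperplane meant to capture the escaping set can move discontinuously or fail to do so with the codimension-one efficiency the game demands; the mixed directions $\bv=\bv_{e}+\bv_{h}$ and the passage between the elliptic and the hyperbolic loci compound the difficulty, as does the absence, for non-arithmetic $\Gamma$, of any Diophantine dictionary in which to cast the estimates. In rank one these degenerations are absent, which is why the rank-one locally symmetric case yielded to classical methods; already the rank-two case $G=\SL_{3}(\RR)$ treated in Theorem~\ref{T:main1} calls for a case analysis according to the face of the Weyl chamber containing $\bv_{h}$, and pushing such an analysis through for higher-rank $G$ --- and then removing the arithmeticity assumption on $\Gamma$ --- is exactly what remains. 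A reasonable intermediate target is therefore $G=\SL_{n}(\RR)$, $\Gamma=\SL_{n}(\ZZ)$.
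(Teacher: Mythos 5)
This item is stated in the paper as a \emph{conjecture}, not a theorem: the authors only verify it for $G=\SL_3(\RR)$, $\Gamma=\SL_3(\ZZ)$ (Theorem~\ref{T:main2}), and for $G=\prod\SL_2(\RR)$ in the remark following it. There is therefore no proof in the paper to compare your attempt against, and you were right not to claim one --- your write-up is explicitly a road map ending with an honest identification of why the general case is open.

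A few remarks on the sketch itself. Your structural reductions are broadly sound and parallel what the paper does for the \emph{thickness} version (Theorem~\ref{T:Gen2}): reduce to $G$ connected, semisimple, centre-free and without compact factors via the Levi decomposition, then isolate the $\RR$-split directions. But be aware that the paper's reduction for Theorem~\ref{T:Gen2} uses Marstrand slicing, which is special to Hausdorff dimension; for HAW you would need to replace it by Lemma~\ref{L:HAW-mnfd}(v) (product structure of preimages under the linear surjection $\mathrm{d}q$) together with Lemma~\ref{L:HAW-open} (to strip away the non-regular/non-semisimple locus, which is a finite union of lower-dimensional strata). Your claim that the complement of the non-semisimple locus ``is HAW'' is fine but is really the statement of Lemma~\ref{L:HAW-open} applied with $S=M\setminus M'$; and the Jordan map $\bv\mapsto\bv_h$ is smooth only away from eigenvalue collisions, so the object $D(x)$ has to be handled on the regular stratum and patched.

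Where your sketch diverges materially from the paper's strategy for the special case is the core argument. You propose a Kleinbock--Margulis height-function/contraction argument with a Borel--Cantelli bookkeeping; the paper instead proves Theorem~\ref{T:ru} by a Dani-correspondence/Diophantine argument in the spirit of \cite{AGK}: the bounded-trajectory condition is encoded by avoidance of the sets $\Delta_\lambda(\bv,\epsilon)$, Minkowski's linear forms theorem supplies the dual vectors $\bw(\Omega,\bv)$ defining the hyperplanes Alice removes, and the new content is showing that this winning strategy can be chosen \emph{uniformly in the weight parameter $\lambda$} (Lemmas~\ref{L:main1}--\ref{L:main2}). Your ``main obstacle'' paragraph correctly identifies exactly this uniformity --- the behaviour of the strategy as $\bv_h$ approaches a wall of the Weyl chamber --- as the crux, and this is precisely where the paper's Theorem~\ref{T:ru} is engineered to win in the $\SL_3$ case. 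In short: your plan is a reasonable high-level description of the shape of any future proof, you correctly do not claim to close the gap, and the gap you name is the real one; but the mechanism you sketch (height function, Borel--Cantelli) is not the one the paper uses even in the case it does handle.
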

	
	\begin{theorem}\label{T:main2}
		Let $G=\SL_{3}(\RR),\Gamma=\SL_{3}(\ZZ)$, and $X= G/\Gamma$ be the corresponding homogeneous space. Then Conjecture \ref{C:Gen2} holds for $X$, that is, for any $x\in X$, the set $E^+(x,\infty)$ is HAW on $\bP^+(\fg)$.
	\end{theorem}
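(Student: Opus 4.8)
The plan is to translate boundedness of the ray $\{\exp(t\bv)x:t\ge0\}$ into a Diophantine condition on the direction $\bv$ via a Dani-type correspondence, and then to recognise the set of good directions --- locally near any $[\bv_0]\in\bP^+(\fg)$ --- as the preimage, under a $C^1$ submersion, of a (possibly weighted) set of badly approximable points, which is known to be HAW; the theorem then follows from the stability properties of HAW sets.

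For the reduction, write $x=g\,\SL_3(\ZZ)$ and identify $X=\SL_3(\RR)/\SL_3(\ZZ)$ with the space of unimodular lattices in $\RR^3$. By Mahler's criterion the ray is bounded if and only if $\inf_{t\ge0}\inf_{v}\|\exp(t\bv)gv\|>0$, where $v$ ranges over primitive vectors of $\ZZ^3$ (the shortest vector of a lattice being primitive). Hence, setting $\mathcal{O}_{v,\delta}=\{\bv:\inf_{t\ge0}\|\exp(t\bv)gv\|<\delta\}$ (a cone, so a subset of $\bP^+(\fg)$), we have
$$
E^+(x,\infty)=\bigcup_{\delta>0}\ \bigcap_{v}\ \mathcal{O}_{v,\delta}^{\,c}.
$$
Since HAW is a $C^1$-invariant, local property of subsets of a $C^1$ manifold, it suffices to show, for each $[\bv_0]$ and a small coordinate chart around it, that $E^+(x,\infty)$ is HAW there; and since HAW sets are stable under countable intersections and passage to supersets, it is enough to exhibit, locally near $[\bv_0]$, a HAW set contained in $\bigcap_v\mathcal{O}_{v,\delta}^{\,c}$ for a single small $\delta>0$ (the value of $\delta$ being chosen by the player in the course of the game, as is standard for badly approximable targets).

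Now fix $[\bv_0]$ and analyse $\mathcal{O}_{v,\delta}$ near it. Passing to the Jordan form of $\bv$, one writes $\|\exp(t\bv)gv\|$ in terms of the components of $gv$ along the generalized eigenspaces and the (real parts of the) corresponding eigenvalues, and minimizes over $t\ge0$: the minimum is small precisely when the components of $gv$ along the expanding part of $\bv$ are anomalously small relative to $\|v\|$, which amounts to $gv$ being well approximated --- at a rate dictated by the eigenvalue data of $\bv$ --- by the most contracting subspace of $\bv$. When that subspace is a plane in $\RR^3$ (the generic ``one expanding eigenvalue'' regime), this is a single linear condition, and the good directions form, near $[\bv_0]$, the preimage of a weighted badly approximable set in a chart of $\RR P^2$ under the $C^1$ submersion $\bv\mapsto(\text{normal line of the plane})$; when it is a line (the ``two expanding eigenvalues'' regime), one gets similarly a weighted badly approximable set in $\RR P^2$ pulled back along $\bv\mapsto(\text{that line})$; on the intermediate wall $\mathrm{Re}\,\lambda_2=0$ and on the Weyl walls the contracting datum degenerates and one instead obtains a one-dimensional badly approximable set (with an auxiliary threshold condition that is harmless because $\delta$ is fixed); the complex-eigenvalue and nilpotent strata are treated the same way, the latter using the polynomial version of the Dani correspondence and, for rank-one-nilpotent directions, the observation that nearby directions concentrate near the wall $\lambda_1:\lambda_2:\lambda_3=1:0:-1$. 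The arithmetic of $\SL_3$ --- traceless eigenvalues, so e.g.\ $|\lambda_3|\le\lambda_1+\lambda_2$ on the Weyl chamber --- is exactly what keeps all the relevant weights nondegenerate enough (bounded away from $0$ and below the Dirichlet exponent) for these approximation sets to be nonempty and of full measure.

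It remains to invoke the known facts: badly approximable vectors, and their weighted analogues, are HAW \cite{BFKRW,KW3}, hence so are their preimages under $C^1$ submersions (a $C^1$ submersion is locally a linear projection, and $S\times\RR^a$ is HAW whenever $S$ is, by the usual fibering argument); countable intersections and supersets of HAW sets are HAW; combining these with the local description of the previous paragraph gives that $E^+(x,\infty)$ is HAW in a neighbourhood of every point of $\bP^+(\fg)$, hence HAW. The step I expect to be the main obstacle is precisely the Dani correspondence of the third paragraph: setting it up uniformly over all of $\bP^+(\fg)$ --- in particular pinning down the correct approximation target, with its dimension and weights, across the Weyl walls, the wall $\mathrm{Re}\,\lambda_2=0$ separating the two generic regimes, the complex-eigenvalue stratum, and above all the nilpotent cone, where $\exp(t\bv)$ grows polynomially and both the target and the submersion degenerate --- and then checking that these exceptional (lower-dimensional) strata do not spoil the local HAW conclusion. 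A secondary, more bookkeeping-type difficulty is making the choice of $\delta$ and the weighted-$\mathbf{BA}$ strategy mesh cleanly with the chart changes.
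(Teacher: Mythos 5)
Your overall framework---Mahler's criterion, a local Dani-type correspondence, and an attempt to realize $E^+(x,\infty)$ locally as the preimage of a known HAW set---is in the right spirit, and the initial reductions (countable intersections, passage to supersets, fixing the threshold $\delta$ during the game) are sound. But the central step is missing, and it is exactly the step the paper isolates as its main technical contribution. When you say that near $[\bv_0]$ the good directions are ``the preimage of a weighted badly approximable set in a chart of $\RR P^2$ under the $C^1$ submersion $\bv\mapsto(\text{normal line})$,'' the weight of that badly approximable target is \emph{not} constant: it is a function of $\bv$, determined by the eigenvalue ratios of $\bv$, and it varies continuously as $\bv$ ranges over the chart. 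So your target is not a fixed HAW subset of a projective plane pulled back along a submersion; it is a bundle whose fiber over the weight parameter $\rr$ is the $\rr$-weighted badly approximable set. The HAW property of each slice (which is the content of \cite{AGK}) does \emph{not} formally imply HAW of the total space, and the paper explicitly warns against precisely this inference in Remark~\ref{r:slice}. The needed uniformity statement is Theorem~\ref{T:ru}---that $S=\{(\rr,u)\in\cR\times U:u\in E_U(F_\rr^+)\}$ is HAW on $\cR\times U$---and it is proved in Sections~5--7 by constructing a hyperplane-potential strategy whose chosen planes \eqref{E:plane} are \emph{locally constant in $\rr$}; your proposal neither contains that argument nor explains why the weight variation is harmless, and your framing of this as a ``bookkeeping-type difficulty'' of meshing the weighted-$\mathbf{BA}$ strategy with chart changes understates what is actually the heart of the problem.

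A secondary point: you do not need to wrestle with the complex-eigenvalue, nilpotent, repeated-eigenvalue and Weyl-wall strata separately. The paper restricts to the single open chart parametrized by $\theta:\cR\times U\times U^-\to\fg$, $(\rr,u_1,u_2)\mapsto\Ad(u_1^{-1}u_2^{-1})\bv_\rr$ (together with its image under the outer automorphism $g\mapsto(g^T)^{-1}$) and discards the lower-dimensional complement via Lemma~\ref{L:HAW-open}; inside that chart the $U^-$-factor is inert, so everything reduces cleanly to the target set $S$ of Theorem~\ref{T:ru}. Your stratum-by-stratum case analysis is therefore both more laborious and, on its own, insufficient.
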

   
    \begin{corollary}\label{C:main2}
    Let $G,\Gamma,\fg,X$ be as in Theorem \ref{T:main2}. Then for any countably many points $\{x_n\}_{n\in\NN}$ in $X$, the set 
    $$\{[\bv]\in\bP^+(\fg): \{\exp(t\bv)x_n: t\geq 0\} \text{ is bounded in }X\text{ for all }n\in\NN\}$$	
    is thick in $\bP^+(\fg)$.
    \end{corollary}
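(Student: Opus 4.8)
The plan is to deduce Corollary \ref{C:main2} directly from Theorem \ref{T:main2} together with two standard structural properties of the hyperplane absolute winning notion on $C^1$ manifolds, recalled in Section \ref{S:2}: that a countable intersection of HAW sets is again HAW, and that every HAW subset (being in particular winning) of a $C^1$ manifold is thick. Since $\fg=\mathfrak{sl}_3(\RR)$, the positive projective space $\bP^+(\fg)=(\fg\setminus\{0\})/\RR^+$ is a smooth, hence $C^1$, manifold (diffeomorphic to $S^7$), so both properties apply to it verbatim, in the intrinsic sense of \cite{KW3} which is independent of the choice of coordinate charts.

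First I would note that, by the very definition of $E^+(x,\infty)$, the set whose thickness is asserted is precisely the countable intersection
\[
\bigcap_{n\in\NN}E^+(x_n,\infty)=\{[\bv]\in\bP^+(\fg): \{\exp(t\bv)x_n: t\ge 0\}\text{ is bounded in }X\text{ for all }n\in\NN\}.
\]
By Theorem \ref{T:main2}, each $E^+(x_n,\infty)$ is HAW on $\bP^+(\fg)$. Since the family of HAW subsets of a fixed $C^1$ manifold is closed under countable intersections, the displayed set is itself HAW on $\bP^+(\fg)$. Finally, an HAW set is winning for Schmidt's game with suitable parameters, and a winning subset of a $C^1$ manifold is thick (its intersection with any nonempty open subset $U$ is winning in $U$, hence of full Hausdorff dimension $\dim U$); therefore $\bigcap_n E^+(x_n,\infty)$ is thick in $\bP^+(\fg)$, as claimed.

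There is essentially no obstacle here: the corollary is a formal consequence of Theorem \ref{T:main2} and the countable-stability and thickness properties of HAW sets on $C^1$ manifolds. The only point requiring (minor) care is to make sure we invoke the intrinsic HAW formalism of \cite{KW3} on $\bP^+(\fg)$ so that these two properties are genuinely available and chart-independent; this is immediate since $\bP^+(\fg)$ is a smooth manifold. (An entirely parallel argument, already indicated in Section \ref{S: homo and geo}, deduces Corollary \ref{C:main1} from Theorem \ref{T:main1}, with $\bP^+(\fg)$ replaced by the unit sphere $\fp^1$ in $\fp$.)
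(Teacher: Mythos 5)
Your argument is correct and is exactly the paper's (implicit) deduction: the paper states this corollary as an immediate consequence of Theorem \ref{T:main2} together with Lemma \ref{L:HAW-mnfd}(i) and (ii), precisely the countable-intersection stability and thickness of HAW sets on a $C^1$ manifold that you invoke. No further comment is needed.
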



    \begin{remark}
        Moreover, Conjecture \ref{C:Gen1} and \ref{C:Gen2} also hold for $G=\prod_{i=1}^s\SL_2(\RR)$ with a lattice $\Gamma$ and a maximal compact subgroup $K$ in $G$. This can be done by combining the arguments in this paper and \cite{AGGL}.
    \end{remark}


    

	\subsection{Organization of the paper and strategy of the proof}

	The paper is organized as follows. In Section \ref{S:2} we recall the definitions and properties of Schmidt's game and its variants--the hyperplane absolute game and the hyperplane potential game. In Section \ref{S:pre} we directly prove Theorems \ref{T:Gen1} and \ref{T:Gen2} by combining Corollary 5.5 in \cite{KM} and Marstrand slicing theorem. In Section \ref{S:main} we state our key result Theorem \ref{T:ru}, from which we deduce Theorems \ref{T:main1} and \ref{T:main2}. The proof of Theorem \ref{T:ru}, which forms the most technical part of this paper, is given in Sections 5-7.

    We highlight here that two of our main theorems, Theorems \ref{T:Gen1} and \ref{T:main1}, are formulated in a purely geometric way. They can be viewed as applications of the corresponding results in the field of homogeneous dynamics.

    Concerning the proof of Theorem \ref{T:ru}, the target set (\ref{defs}) can be regarded as a fiber bundle over an open interval of weight parameters, where each fiber is proved to be HAW on its ambient space (see \cite{AGK}). We manage to show that the winning strategy is ``locally constant'' with respect to the parameters, which is reflected in the choices of planes (\ref{E:plane}) in the proof of Lemma \ref{L:main2}. This requires a detailed analysis of how the strategy used in \cite{AGK} depends on the weight parameter.


    
	\section{Preliminaries on Schmidt games}\label{S:2}
	
	\subsection{Schmidt's $(\alpha,\beta)$-game}\label{alphabeta}
	
	We first recall Schmidt's $(\alpha,\beta)$-game introduced in \cite{Sc1}. It
	involves two parameters $\alpha,\beta\in(0,1)$  and is played by two players Alice and Bob on a complete metric space $(X,\dist)$ with a target set $S\subset X$. Bob starts the game by choosing a closed ball $B_0=B(x_0,\rho_0)$ in $X$ with center $x_0$ and radius $\rho_0$. After Bob chooses a closed ball $B_i = B(x_i,{\rho}_i) $, Alice chooses  $A_i = B(x_i', {\rho}_i')$ with $${\rho}'_i=\alpha {\rho}_i\text{ and }
	\dist(x'_i, x_i) \leq (1-\alpha){\rho}_i\,,$$
	and then Bob chooses  $B_{i+1} = B(x_{i+1},{\rho}_{i+1}) $ with $${\rho}_{i+1}=\beta {\rho}'_{i} \text{ and }  \dist(x_{i+1}, x'_i) \leq (1-\beta){\rho}'_i \,,$$
	etc. This implies that
	the balls are nested:
	$$
	B_0 \supset A_0 \supset B_1 \supset\cdots ;$$
	Alice wins the game if the unique point $\bigcap_{i=0}^\infty A_i=\bigcap_{i=0}^\infty B_i$ belongs to $S$, and Bob wins otherwise. The set $S$ is \textsl{$(\alpha,\beta)$-winning} if Alice has a winning strategy, is \textsl{$\alpha$-winning} if it is $(\alpha,\beta)$-winning for any $\beta\in(0,1)$, and is \textsl{winning} if it is $\alpha$-winning for some $\alpha$. Schmidt \cite{Sc1} proved that:
	
	\begin{itemize}
		\item[$\bullet$]  winning subsets of
		Riemannian manifolds
		are thick;
		\item[$\bullet$]  if $S$ is $\alpha$-winning and $\varphi:X\to X$ is bi-Lipschitz, then $\varphi(S)$ is   $\alpha'$-winning, where $\alpha'$ depends on $\alpha$ and the bi-Lipschitz constant of $\varphi$;
		\item[$\bullet$]  a countable intersection of $\alpha$-winning sets is again $\alpha$-winning.
	\end{itemize}
	
	As such, Schmidt's game has been a powerful tool for proving 
	thickness of intersections of certain countable families of sets, see e.g.\ \cite{An1, An2,  BBFKW, BFK, BFKRW, Da1, Da2, Da3}. However, for a fixed $\alpha$, the class of $\alpha$-winning subsets of a Riemannian manifold depends on the choice of the metric, and is not known to be preserved by diffeomorphisms.

	\subsection{Hyperplane absolute game on $\RR^d$}\label{haw}
	
	Inspired by ideas of McMullen \cite{Mc}, the hyperplane absolute game on the Euclidean space $\RR^d$ was introduced in \cite{BFKRW}.
	It has the advantage that the family of its winning sets is preserved by $C^1$ diffeomorphisms.
	Let $S \subset \RR^d$ be a target set and let
	$\beta \in \left(0, \frac13 \right)$.
	As before Bob begins by choosing a closed ball $B_0$ of radius ${\rho}_0$.
	For an affine hyperplane $L\subset\RR^d$ and $\rho>0$, we denote the ${\rho}$-neighborhood of $L$ by
	$$L^{({\rho})}:=\{\bx\in\RR^d:\mathrm{dist}(\bx,L)<{\rho}\}.$$
	Now, after Bob chooses a closed ball $B_i$ of radius ${\rho}_i$, Alice chooses a hyperplane neighborhood $L_i^{({\rho}'_i)}$ with ${\rho}'_i\le\beta {\rho}_i$,
	and then Bob chooses a closed ball $B_{i+1}\subset B_i\setminus L_i^{({\rho}'_i)}$ of radius ${\rho}_{i+1}\ge\beta {\rho}_i$. Alice wins the game if and only if
	$$\bigcap_{i=0}^\infty B_i\cap S\ne\emptyset.$$ The set $S$ is \textsl{$\beta$-hyperplane absolute
		winning} (\textsl{$\beta$-HAW} for short)
	if Alice has a winning strategy, and is \textsl{hyperplane absolute winning} (\textsl{HAW} for short)
	if it is $\beta$-HAW for any $\beta\in(0,\frac{1}{3})$.

	\begin{lemma}[\cite{BFKRW}]\label{L:HAW-Rd}
		\begin{itemize}
			\item[(i)] HAW subsets   are winning, and hence thick.
			\item[(ii)] A countable intersection of HAW subsets   is again HAW.
			\item[(iii)]   The image of an HAW set under a $C^1$ diffeomorphism $\RR^d \to \RR^d$ is HAW.
		\end{itemize}
	\end{lemma}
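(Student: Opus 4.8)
I would prove the three assertions by the standard device of letting Alice, while playing the game in question, secretly run one or several auxiliary hyperplane absolute games and translate moves back and forth.

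\medskip
\noindent\textbf{(i).} Fix $\beta\in(0,1)$ and put $\alpha=\tfrac14$; it suffices to give Alice a winning strategy in Schmidt's $(\alpha,\beta)$-game with target $S$, since then $S$ is $\alpha$-winning (as $\alpha$ does not depend on $\beta$), hence winning, hence thick by Schmidt's theorem quoted above. Note $\tfrac\beta4\in(0,\tfrac13)$, so $S$ is $\tfrac\beta4$-HAW; Alice secretly plays this $\tfrac\beta4$-HAW game for $S$, arranging that the ball Bob plays on the $i$-th move there is exactly the ball $B_i=B(x_i,\rho_i)$ Bob plays in the $(\alpha,\beta)$-game. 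When her HAW strategy returns a hyperplane neighbourhood $L_i^{(\rho_i')}$ with $\rho_i'\le\tfrac\beta4\rho_i\le\tfrac14\rho_i$, an elementary computation shows that the larger of the two spherical caps of $B_i$ cut off by $L_i^{(\rho_i')}$ contains a closed ball $A_i$ of radius $\tfrac14\rho_i=\alpha\rho_i$ whose centre lies within $\tfrac12\rho_i\le(1-\alpha)\rho_i$ of $x_i$; Alice plays this $A_i$. Bob's reply $B_{i+1}\subseteq A_i$ has radius $\alpha\beta\rho_i=\tfrac\beta4\rho_i$, so it is a legal HAW move for Bob (it lies in $B_i\setminus L_i^{(\rho_i')}$ and its radius is $\ge\tfrac\beta4\,\mathrm{radius}(B_i)$), and Alice feeds it to the auxiliary game. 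Since $B_i\supseteq A_i\supseteq B_{i+1}$ with radii $(\alpha\beta)^i\rho_0\to0$, the outcome of Schmidt's game is the single point $\bigcap_i B_i$, which equals the outcome of the auxiliary HAW game and therefore lies in $S$.

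\medskip
\noindent\textbf{(ii).} Write $S=\bigcap_{n\ge1}S_n$, fix $\beta\in(0,\tfrac13)$, and fix a schedule $\nu:\NN\to\NN\cup\{\infty\}$ in which $\{i:\nu(i)=\infty\}$ has positive density and every $n\in\NN$ occurs with gaps bounded by some $D_n$ (for instance, reserve the even turns for $\infty$ and let $n$ occur on those odd turns that are $\equiv 2^{n-1}\pmod{2^n}$, so $D_n=2^{n+1}$). For each $n$ the set $S_n$ is $\beta^{D_n}$-HAW, so Alice fixes a winning strategy $\sigma_n$ for that game with target $S_n$. In the $\beta$-HAW game for $S$ she plays as follows: on a turn with $\nu(i)=\infty$ she plays a hyperplane through the centre of the current ball (which forces Bob's radius to at least halve); on a turn with $\nu(i)=n$, if this is the $k$-th occurrence of $n$, she applies $\sigma_n$ to the list $B_{i_1^{(n)}},\dots,B_{i_k^{(n)}}$ of balls Bob played at the previous occurrences of $n$ (together with her own previous $\sigma_n$-moves) and plays the hyperplane neighbourhood it returns. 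This list is a legal partial $\beta^{D_n}$-HAW play for $\sigma_n$: consecutive balls are nested and avoid the hyperplane $\sigma_n$ chose previously, and their radii drop by a factor $\ge\beta^{D_n}$ since at most $D_n$ turns of the real game separate them; also the returned thickenings are $\le\beta^{D_n}\,\mathrm{radius}(B_{i_k^{(n)}})\le\beta\,\mathrm{radius}(B_{i_k^{(n)}})$, so they are legal real moves for Alice. Hence for each $n$ the set $\bigcap_k B_{i_k^{(n)}}=\bigcap_i B_i$ meets $S_n$; and the $\infty$-turns force $\mathrm{radius}(B_i)\to0$, so $\bigcap_i B_i$ is a single point, which therefore lies in every $S_n$ and hence in $S$.

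\medskip
\noindent\textbf{(iii).} Let $f:\RR^d\to\RR^d$ be a $C^1$ diffeomorphism, $S$ HAW, and fix $\beta\in(0,\tfrac13)$; Alice plays the $\beta$-HAW game for $f(S)$. The two facts used are: on a ball of radius $r$ small enough (within a fixed compact neighbourhood of Bob's first ball), $f$ and $f^{-1}$ are $(1+\eta)$-bi-Lipschitz with $\eta$ as small as we wish; and — crucially — $f$ carries the part of any affine hyperplane $L$ lying in $B(x_0,r)$ into the $\varepsilon r$-neighbourhood of the affine hyperplane tangent at $f(x_0)$ to the $C^1$ hypersurface $f(L)$, with $\varepsilon$ as small as we wish once $r$ is small. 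Alice first plays hyperplanes through the centres of Bob's balls until the radius drops below a threshold on which these estimates hold with $\eta$ and $\varepsilon$ small relative to $\beta$; from then on she runs an auxiliary $\gamma$-HAW game for $S$, for a fixed $\gamma\in(0,\beta)$ chosen below, maintaining balls $\widetilde B_i\subseteq f^{-1}(B_i)$. Given $\widetilde B_i$ she feeds it to her winning $S$-strategy, which returns a hyperplane neighbourhood $\widetilde L_i^{(\widetilde\rho_i')}$ with $\widetilde\rho_i'\le\gamma\,\mathrm{radius}(\widetilde B_i)$; she pushes it forward by $f$ and enlarges it to a genuine affine hyperplane neighbourhood $\widehat L_i^{(\rho_i')}$ containing $f\big(\widetilde L_i^{(\widetilde\rho_i')}\cap\widetilde B_i\big)$, with $\rho_i'\le\big((1+\eta)\gamma+\varepsilon\big)\,\mathrm{radius}(B_i)\le\beta\,\mathrm{radius}(B_i)$ by the choice of threshold, and plays that in the real game. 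When Bob answers with $B_{i+1}\subseteq B_i\setminus\widehat L_i^{(\rho_i')}$, she inscribes in $f^{-1}(B_{i+1})\cap\widetilde B_i$ a ball $\widetilde B_{i+1}$ of radius $\ge\gamma\,\mathrm{radius}(\widetilde B_i)$ (possible, with $\gamma$ a small enough fixed multiple of $\beta$, because $f^{-1}$ distorts by at most $1+\eta$ and $\mathrm{radius}(B_{i+1})\ge\beta\,\mathrm{radius}(B_i)$); this $\widetilde B_{i+1}$ avoids $\widetilde L_i^{(\widetilde\rho_i')}$ since $f(\widetilde B_{i+1})\subseteq B_{i+1}$ is disjoint from $\widehat L_i^{(\rho_i')}$, whose $f$-preimage contains $\widetilde L_i^{(\widetilde\rho_i')}\cap\widetilde B_i$. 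Interspersing centrally located hyperplanes to keep the radii shrinking, $\bigcap_i\widetilde B_i$ is a single point $z\in S$, and $f(z)=\bigcap_i B_i$ is the outcome of the real game, so it lies in $f(S)$ and Alice wins.

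\medskip
The main obstacle is in (iii): one must control quantitatively how the $C^1$ map $f$ deforms hyperplanes and balls and verify that the pushed-forward hyperplane neighbourhood and the pulled-back balls remain admissible moves with only a uniform, bounded loss in the parameter. This is exactly the point where the flatness of hyperplanes is essential — a nonlinear map takes an affine hyperplane into a negligibly thin neighbourhood of another affine hyperplane — and where the analogous statement for Schmidt's ball game fails, since there the comparison would have to be carried out between genuinely distorted balls.
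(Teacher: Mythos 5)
The paper offers no proof of this lemma: it is quoted, with attribution, from \cite{BFKRW}, so there is no in-paper argument to compare yours against, and I assess the blind reconstruction on its own terms. Your parts (i) and (ii) are correct and follow the standard lines. One small slip in (ii): for $n\ge 2$ the residue $2^{n-1}\pmod{2^n}$ is even and therefore picks out no odd turn at all; what you want is to index the odd turns by $m$ and assign $\nu=n$ to the $m$-th odd turn exactly when $m\equiv 2^{n-1}\pmod{2^n}$, which does give $D_n=2^{n+1}$ and makes the rest of your scheduling argument go through unchanged.

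Part (iii) has a genuine gap at the step where you ``inscribe in $f^{-1}(B_{i+1})\cap\widetilde B_i$ a ball $\widetilde B_{i+1}$ of radius $\ge\gamma\,\mathrm{radius}(\widetilde B_i)$''. The only invariant you propagate is the inclusion $\widetilde B_i\subset f^{-1}(B_i)$, with no lower bound on the ratio $\mathrm{radius}(\widetilde B_i)/\mathrm{radius}(B_i)$; in fact your own recursion $\mathrm{radius}(\widetilde B_{i+1})\ge\gamma\,\mathrm{radius}(\widetilde B_i)$ with $\gamma<\beta$ permits this ratio to tend to $0$. Once it is small, $f(\widetilde B_i)$ occupies only a small corner of $B_i$, and since your real-game move $\widehat L_i^{(\rho_i')}$ was built only to block $f(\widetilde L_i^{(\widetilde\rho_i')}\cap\widetilde B_i)$ and nothing else, Bob is entirely free to place $B_{i+1}$ inside $B_i\setminus f(\widetilde B_i)$; then $f^{-1}(B_{i+1})\cap\widetilde B_i=\emptyset$ and there is nothing to inscribe. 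Repairing this requires the stronger invariant $\mathrm{radius}(\widetilde B_i)\ge c\,\mathrm{radius}(B_i)$ for a fixed $c>0$, but each step of the translation costs a loss of order the bi-Lipschitz defect of $f$ at scale $\rho_i$, and for a map that is merely $C^1$ (as opposed to $C^{1,\alpha}$) these defects are $o(1)$ yet need not be summable, so the invariant does not survive a naive induction either. Controlling this accumulation of distortion errors is precisely the technical heart of the $C^1$-invariance theorem in \cite{BFKRW}, and it requires a more careful organization of the auxiliary game than the one you sketch; as written your induction does not close.
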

	
	\subsection{HAW subsets of a manifold}\label{hawmfld}
	
	The notion of HAW sets has been extended to subsets of $C^1$ manifolds in \cite{KW3}.
	This is done in two steps. First one defines the absolute hyperplane game on an open subset $W
	\subset \RR^d$. It is defined just as the absolute hyperplane game on
	$\RR^d$, except for requiring that Bob's first move $B_0$ be
	contained in $W$. If Alice has a winning strategy, we say that $S$ is
	{\sl HAW on $W$}.
	Now let $M$ be a $d$-dimensional $C^1$ manifold, and let $\{(U_\alpha, \varphi_\alpha)\}$ be a $C^1$ atlas, that is, $\{U_\alpha\}$ is an open cover of $M$, and each $\varphi_\alpha$ is a $C^1$ diffeomorphism from $U_\alpha$ onto the open subset $\varphi_\alpha(U_\alpha)$ of $\RR^d$. A subset $S\subset M$ is said to be \textsl{HAW on $M$} if for each $\alpha$, $\varphi_\alpha(S\cap U_\alpha)$ is HAW on $\varphi_\alpha(U_\alpha)$. Note that Lemma \ref{L:HAW-Rd} (iii) implies that the definition is independent of the choice of the atlas (see \cite{KW3} for details).
	
	\begin{lemma}[\cite{KW3}]\label{L:HAW-mnfd}
		\begin{itemize}
			\item[(i)]  HAW subsets of a $C^1$ manifold are thick.
			\item[(ii)]  A countable intersection of HAW subsets of a $C^1$ manifold is again HAW.
			\item[(iii)] Let $\varphi:M\to N$ be a diffeomorphism between $C^1$ manifolds, and let $S\subset M$ be an HAW subset of $M$. Then $\varphi(S)$ is an HAW subset of $N$.
			\item[(iv)] Let $M$ be a $C^1$ manifold with an open cover $\{U_\alpha\}$. Then a subset $S\subset M$ is HAW on $M$ if and only if $S\cap U_\alpha$ is HAW on $U_\alpha$ for each $\alpha$.
			\item[(v)] Let $M, N$ be $C^1$ manifolds, and let $S\subset M$ be an HAW subset of $M$. Then $S\times N$ is an HAW subset of $M\times N$.
		\end{itemize}
	\end{lemma}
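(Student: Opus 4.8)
All five statements are local in nature, and the plan is to deduce each of them from the Euclidean results in Lemma~\ref{L:HAW-Rd} by passing to coordinate charts, using the already-noted fact that ``$S$ is HAW on $M$'' is independent of the chosen $C^1$ atlas (this, in turn, rests on Lemma~\ref{L:HAW-Rd}(iii), and is the one point I would simply cite from \cite{KW3}). First I would record the evident relative versions of Lemma~\ref{L:HAW-Rd} for an open set $W\subseteq\RR^d$: (a) if $W'\subseteq W$ is open, then any set HAW on $W$ is HAW on $W'$, since Bob's opening move in the game on $W'$ is a legal opening move in the game on $W$, so Alice's strategy transfers; (b) a countable intersection of sets HAW on $W$ is HAW on $W$, by interleaving Alice's strategies exactly as in the proof of Lemma~\ref{L:HAW-Rd}(ii); (c) a set HAW on $W$ is thick in $W$, which I would obtain by restricting to a ball $B\subseteq W$, noting that HAW on $B$ implies $\alpha$-winning on $B$ for the classical Schmidt game, and applying Schmidt's theorem that winning sets are thick.

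With (a)--(c) available, parts (i), (ii) and (iv) are chart bookkeeping. Fixing a $C^1$ atlas $\{(U_\alpha,\varphi_\alpha)\}$: for (i), each $\varphi_\alpha(S\cap U_\alpha)$ is thick in $\varphi_\alpha(U_\alpha)$ by (c), hence $S\cap U_\alpha$ is thick in $U_\alpha$ since $\varphi_\alpha^{-1}$ is locally bi-Lipschitz and Hausdorff dimension is a local bi-Lipschitz invariant; as thickness is a local property, this gives thickness of $S$ in $M$. For (ii), intersect chartwise and invoke (b). For (iv), if $S$ is HAW on $M$ and $U\subseteq M$ is open, then restricting the atlas to charts contained in $U$ shows $S\cap U$ is HAW on $U$; conversely, given an open cover $\{U_\alpha\}$ with $S\cap U_\alpha$ HAW on $U_\alpha$, I would choose a $C^1$ atlas $\{(W_\gamma,\psi_\gamma)\}$ of $M$ refining the cover, say $W_\gamma\subseteq U_{\alpha(\gamma)}$, and then apply atlas-independence of ``HAW on $U_{\alpha(\gamma)}$'' to an atlas of $U_{\alpha(\gamma)}$ containing the chart $(W_\gamma,\psi_\gamma)$ to conclude that $\psi_\gamma(S\cap W_\gamma)$ is HAW on $\psi_\gamma(W_\gamma)$, i.e.\ $S$ is HAW on $M$. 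For (iii), given a $C^1$ diffeomorphism $\varphi\colon M\to N$ and a $C^1$ atlas $\{(V_\beta,\psi_\beta)\}$ of $N$, I would pull it back to the atlas $\{(\varphi^{-1}(V_\beta),\psi_\beta\circ\varphi)\}$ of $M$; the hypothesis then says that each $\psi_\beta(\varphi(S)\cap V_\beta)$ is HAW on $\psi_\beta(V_\beta)$, which, by atlas-independence, is precisely the assertion that $\varphi(S)$ is HAW on $N$.

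The genuinely new ingredient is (v). Taking atlases of $M$ and $N$ and forming their product atlas of $M\times N$, I would reduce to the Euclidean claim: if $A\subseteq\RR^d$ is HAW on an open $W\subseteq\RR^d$, then $A\times\RR^e$ is HAW on $W\times\RR^e$ (afterwards restricting to an open box via (a)). The strategy for Alice in the game on $W\times\RR^e$ with target $A\times\RR^e$ is to ``ignore the last $e$ coordinates'': given Bob's ball $B_i\subseteq\RR^{d+e}$, project it orthogonally to a ball $\widehat B_i\subseteq\RR^d$ of the same radius (with $\widehat B_0\subseteq W$), run Alice's winning $\RR^d$-strategy on the sequence $\widehat B_i$ to produce a hyperplane $\widehat L_i\subseteq\RR^d$ with $\rho_i'\le\beta\rho_i$, and have Alice play the affine hyperplane $L_i:=\widehat L_i\times\RR^e$, whose $\rho_i'$-neighborhood is $\widehat L_i^{(\rho_i')}\times\RR^e$. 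One checks that when Bob responds with $B_{i+1}\subseteq B_i\setminus L_i^{(\rho_i')}$ of radius $\ge\beta\rho_i$, its projection is a legal Bob move in the $\RR^d$-game, so Alice wins there and there is a point $\widehat p\in A\cap\bigcap_i\widehat B_i$. Finally, the fibers $(\{\widehat p\}\times\RR^e)\cap B_i$ are nonempty compact sets, all contained in the bounded set $B_0$ and nested, so they have a common point $q$, and then $(\widehat p,q)\in(A\times\RR^e)\cap\bigcap_i B_i$.

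I expect the main obstacle to be not any of the chart manipulations above but the background fact that the manifold definition of HAW is atlas-independent — essentially the only place where one genuinely needs the $C^1$-invariance of HAW in $\RR^d$ rather than mere bi-Lipschitz invariance — which I would take as given from \cite{KW3}; with that in hand, everything reduces routinely to Lemma~\ref{L:HAW-Rd}.
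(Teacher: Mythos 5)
Your proposal is correct and, since the paper's "proof" of this lemma is essentially a citation to \cite{KW3} (with (iv) declared "clear from the definition"), what you have written is the genuine content: you reconstruct the arguments one would find in the reference. The chart-bookkeeping for (i)--(iv) is exactly right, and the one place where one needs more than bi-Lipschitz invariance --- atlas-independence, resting on Lemma~\ref{L:HAW-Rd}(iii) --- is correctly isolated and cited as the black box. Your proof of (v) via the "ignore the last $e$ coordinates" strategy (project Bob's ball orthogonally to $\RR^d$, lift Alice's $\RR^d$-hyperplane $\widehat L_i$ to $\widehat L_i\times\RR^e$, observe that the $\rho'_i$-neighborhood splits as $\widehat L_i^{(\rho'_i)}\times\RR^e$ so legality transfers both ways, and finish with nested compact fibers over the limit point $\widehat p$) is the standard argument and almost certainly the one in \cite[Proof of Theorem 3.6(a)]{KW3}. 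Two small remarks: in (iv) you need, as you note, to apply atlas-independence to the refined cover, so it is not quite "clear from the definition" as the paper asserts without that lemma in hand; and in the last line of (v) the element you call $q$ already lies in $\{\widehat p\}\times\RR^e\subset\RR^{d+e}$, so you should conclude $q\in(A\times\RR^e)\cap\bigcap_iB_i$ rather than writing the pair $(\widehat p,q)$ --- a notational slip, not a gap.
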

	
	\begin{proof}
		(i)--(iii) appeared as \cite[Proposition 3.5]{KW3} and are clear from Lemma \ref{L:HAW-Rd}. (iv) is clear from the definition. (v) is proved in \cite[Proof of Theorem 3.6(a)]{KW3}
	\end{proof}
	
	We shall also need the following lemma,
	\begin{lemma}\label{L:HAW-open}
		Let $M$ be a $C^1$ manifold of dimension $d$, $M'\subset M$ be a finite union of $C^1$ submanifolds of strict lower dimensions. Then $S\subset M\setminus M'$ is HAW on $M\setminus M'$ if and only if it is HAW on $M$.
	\end{lemma}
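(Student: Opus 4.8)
The plan is to reduce everything to the Euclidean setting via charts and then exploit the fact that lower-dimensional submanifolds are locally contained in affine hyperplanes, so Alice can dispose of them with a single opening move. First I would use Lemma \ref{L:HAW-mnfd}(iv): since HAW on a manifold is a local property detected on any open cover, it suffices to check the equivalence on the domain of each chart. Working in a fixed chart $(U,\varphi)$ with $W:=\varphi(U)\subset\RR^d$ open, write $W':=\varphi(M'\cap U)$, which is a finite union of $C^1$ submanifolds of $W$ of dimension at most $d-1$. Thus the whole statement comes down to the Euclidean claim: for $S\subset W\setminus W'$, $S$ is HAW on $W\setminus W'$ if and only if $S$ is HAW on $W$.

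For one direction, suppose $S$ is HAW on $W$. Given Bob's first move $B_0\subset W\setminus W'$ and a parameter $\beta\in(0,\tfrac13)$, Alice plays her winning strategy for the HAW game on $W$ starting from $B_0$; the resulting point lies in $S\subset W\setminus W'$, so the same strategy wins on $W\setminus W'$. The substantive direction is the converse. Assume $S$ is HAW on $W\setminus W'$, fix $\beta\in(0,\tfrac13)$ and Bob's first move $B_0\subset W$. If $B_0\cap W'=\emptyset$ we are immediately reduced to the previous game, so assume not. The idea is that Alice should spend finitely many initial moves shrinking Bob's ball into $W\setminus W'$, after which she switches to her winning strategy on $W\setminus W'$. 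The key geometric input: each piece of $W'$ being a $C^1$ submanifold of dimension $\le d-1$, after passing to a small enough ball it is contained in a $\delta$-neighborhood of an affine hyperplane, with $\delta$ as small as we like relative to the ball's radius. Hence in a single move Alice can remove a hyperplane neighborhood $L^{(\rho')}$ with $\rho'\le\beta\rho$ that covers the relevant piece of $W'$, forcing Bob's next ball to avoid it.

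The main obstacle is that $W'$ is a \emph{finite union} of submanifolds, and a single hyperplane move handles only one affine-hyperplane-like piece at a time; moreover a submanifold need not be flat, so "$W'\cap B$ lies near a hyperplane" only holds after Bob's ball has been shrunk small enough. I would handle this by an inductive/quantitative argument: enumerate the submanifolds $W'=\bigcup_{j=1}^N N_j$; at stage $j$, given Bob's current ball $B$, either $N_j\cap B=\emptyset$ and Alice makes a trivial legal move, or by $C^1$-regularity there is a radius $r_j>0$ such that any ball of radius $\le r_j$ meeting $N_j$ lies in the $(\beta\cdot\text{radius})$-neighborhood of some affine hyperplane tangent to $N_j$; Alice first plays a bounded number of "free" centered moves to force the radius below $\min_j r_j$ while keeping the ball inside $W$, then plays, for each $j$ in turn, one hyperplane move eliminating $N_j$. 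Since each hyperplane move shrinks the radius by at most a factor $\beta$ and there are only $N+O(1)$ such moves, after finitely many rounds Bob's ball is a legal HAW-move-ball contained in $W\setminus W'$, and Alice invokes her winning strategy there; the resulting intersection point lies in $S$. One should also invoke Lemma \ref{L:HAW-mnfd}(iii) (invariance under $C^1$ diffeomorphisms) to pass the conclusion back from charts to $M$ and $M\setminus M'$, noting $M\setminus M'$ inherits a $C^1$ atlas from $M$.
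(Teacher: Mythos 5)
Your proposal is correct in spirit but takes a genuinely different route from the paper's, and leaves two points that would need to be filled in. The paper first reduces by induction to the case where $M'$ is a single $C^1$ submanifold, and then chooses a special $C^1$ atlas $\{(U_\alpha,\phi_\alpha)\}$ in which each $\phi_\alpha(U_\alpha\cap M')$ is \emph{literally contained} in an affine hyperplane $L_\alpha\subset\RR^d$ (the standard submanifold flattening chart). With that atlas Alice's very first move $L_\alpha^{(\beta\rho(B_0))}$ removes $M'$ entirely, after which she simply runs her winning strategy for $\phi_\alpha\big(S\cap U_\alpha\cap(M\setminus M')\big)$ on $\phi_\alpha\big(U_\alpha\cap(M\setminus M')\big)$; there is no shrinking phase and no tangent-plane estimate. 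You instead work in an arbitrary chart and replace the flattening by a quantitative $C^1$ tangent-plane approximation combined with a forced-shrinking preamble. For that to actually close up you need to justify: (1) the uniform radius $r_j$ --- the assertion that every sufficiently small ball $B$ meeting $N_j$ has $N_j\cap B$ inside a $\beta\rho(B)$-neighborhood of a single hyperplane is a \emph{uniform} modulus-of-$C^1$-continuity statement; it follows from compactness of $N_j\cap\overline{B_0}$, which in turn requires each $N_j$ to be (locally) closed in the chart domain, since otherwise a small ball can contain several ``sheets'' of $N_j$ with incompatible tangent directions; and (2) the claim that Alice can ``force the radius below $\min_j r_j$'' --- she does not control Bob's radius directly, but removing a hyperplane slab of full allowed width $\beta\rho_i$ passing through the center of $B_i$ forces $\rho_{i+1}\le(1-\beta)\rho_i/2<\rho_i/2$, giving geometric decay, so finitely many such moves suffice. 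With these two points made precise your argument works, but it is appreciably longer and more delicate than the paper's; what it buys you is that you never need to construct a flattening atlas, which could be convenient in situations where producing one is awkward, whereas the paper's one-move argument is shorter exactly because the flattening chart does all the geometric work up front.
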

	
	\begin{proof}
		The if direction is a direct consequence of the definition. For the only if direction, by using an induction argument, we may assume that $M'$ itself is an $C^1$ submanifold of strict lower dimension. We choose and fix a $C^1$-altas $\{(U_\alpha,\phi_\alpha)\}$ of $M$ such that, for any $\alpha$, there exists a hyperplane $L_{\alpha}\subset \RR^d$ satisfying
		$$\phi_\alpha(U_\alpha\cap M')\subset L_{\alpha}\subset \RR^d.$$
		Since the HAW property is independent of choices of the atlas, to prove $S$ is HAW on $M$, it suffices to prove $\phi_{\alpha}(S\cap U_\alpha)$ is HAW on $\phi_{\alpha}(U_{\alpha})$ for all $U_\alpha$ in this specific $C^1$-altas. As $\left\{\left(U_\alpha\cap\left(M\setminus M'\right),\phi_\alpha\right)\right\}$ is a $C^1$-altas of $M\setminus M'$, $\phi_{\alpha}\left(S\cap U_\alpha\cap\left(M\setminus M'\right)\right)$ is HAW on $\phi_{\alpha}\left(U_{\alpha}\cap \left(M\setminus M'\right)\right)$. In view of these properties, the winning strategy for Alice is clear: for any $\beta$-hyperplane absolute game played on $\phi_{\alpha}(U_{\alpha})$, after Bob choosing his $B_0$, Alice chooses neighborhood $L_{\alpha}^{\beta\rho(B_0)}$ as her first move. Then the rest game is played on $\phi_{\alpha}\left(U_{\alpha}\cap \left(M\setminus M'\right)\right)$, so Alice can play according to her winning strategy on this set. This completes the proof.
	\end{proof}

	\subsection{Hyperplane potential game}
	
	Finally, we recall the hyperplane potential game introduced in \cite{FSU}. Being played  on $\RR^d$, it has the same winning sets as the hyperplane absolute game. This allows one to prove the HAW property of a set $S\subset\RR^d$ by showing that it is winning for the hyperplane potential game (see \cite{NS}).
	
	Let $S\subset\RR^d$ be a target set, and let $\beta\in(0,1)$, $\gamma>0$. The \textsl{$(\beta,\gamma)$-hyperplane potential game} is defined as follows: Bob begins by choosing a closed ball $B_0\subset\RR^d$. After Bob chooses a closed ball $B_i$ of radius ${\rho}_i$, Alice chooses a countable family of hyperplane neighborhoods $\{L_{i,k}^{({\rho}_{i,k})} : k\in\NN\}$ such that
	$$\sum_{k=1}^\infty {\rho}_{i,k}^\gamma\le(\beta {\rho}_i)^\gamma,$$
	and then Bob chooses a closed ball $B_{i+1}\subset B_i$ of radius $\rho_{i+1}\ge\beta \rho_i$. Alice wins the game if and only if
	$$\bigcap_{i=0}^\infty B_i\cap\Big(S\cup\bigcup_{i=0}^\infty\bigcup_{k=1}^\infty L_{i,k}^{(\rho_{i,k})}\Big)\ne\emptyset.$$
	The set $S$ is \textsl{$(\beta,\gamma)$-hyperplane potential winning} (\textsl{$(\beta,\gamma)$-HPW} for short) if Alice has a winning
	strategy, and is \textsl{hyperplane potential winning} (\textsl{HPW} for short) if it is $(\beta,\gamma)$-HPW for any $\beta\in(0,1)$ and $\gamma>0$. The following lemma is a special case of \cite[Theorem C.8]{FSU}.
	
	\begin{lemma}\label{L:HPW}
		A subset $S\subset\RR^d$ is HPW if and only if it is HAW.
	\end{lemma}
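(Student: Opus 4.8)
Since the final statement is recorded in the literature — it is a special case of \cite[Theorem C.8]{FSU} — the most economical ``proof'' is that reference. To prove it from scratch, my plan would be to establish the two implications separately, in each case letting Alice play one of the two games while privately simulating a play of the other; the implication from HAW to HPW is routine, while the converse is the substantial one.

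For HAW $\Rightarrow$ HPW: fix $\beta\in(0,1)$ and $\gamma>0$, pick an auxiliary parameter $\beta_0\in(0,1/3)$ small in terms of $\beta$, and fix a winning strategy for Alice in the $\beta_0$-hyperplane absolute game (available since $S$ is HAW). In the $(\beta,\gamma)$-potential game Alice would maintain a virtual $\beta_0$-absolute game, submitting at each round, as a one-element family, the hyperplane neighbourhood $L_i^{(\rho_i')}$ dictated by her absolute strategy; this is legal since $\rho_i'\le\beta_0\rho_i\le\beta\rho_i$. If Bob's next ball misses $L_i^{(\rho_i')}$, this is a legal move of the virtual absolute game and Alice advances it; if Bob's ball meets $L_i^{(\rho_i')}$, Alice keeps the virtual game in place and re-submits a rescaled copy of that neighbourhood on subsequent rounds. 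Then either Bob eventually produces a ball disjoint from it, so the virtual game proceeds, or he meets it forever, in which case the nested balls collapse into that hyperplane neighbourhood, hence into the enlarged target $S\cup\bigcup L_{i,k}^{(\rho_{i,k})}$, and Alice wins; and if the virtual absolute game runs for infinitely many effective rounds, it is won by Alice, so the limit point lies in $S$. The only care needed is the potential bookkeeping for the re-submissions — arranged by taking $\beta_0$ small enough — and a mild enlargement of radii to force the limit into an open hyperplane neighbourhood rather than merely its boundary.

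For HPW $\Rightarrow$ HAW: fix $\beta_0\in(0,1/3)$; now Alice would run, inside the $\beta_0$-absolute game, a virtual $(\beta,\gamma)$-potential game for suitably chosen $\beta,\gamma$. The obstacle — and this is where I expect the real difficulty to lie — is that the potential strategy may demand deletion of a whole countable family of hyperplane neighbourhoods in a single round, while the absolute game permits Alice only one neighbourhood per round, and a single fat absolute deletion around a hyperplane $L$ absorbs only those family members supported on $L$, so distinct hyperplanes require distinct rounds. The way out exploits the potential constraint $\sum_k\rho_{i,k}^\gamma\le(\beta\rho_i)^\gamma$, which at any given scale bounds the number of family members that are large relative to that scale: Alice processes the demanded neighbourhoods in decreasing order of size, realising the currently relevant ones with her single absolute moves, deferring the smaller ones until Bob's forced shrinkage has brought the playing ball down to their scale, and discarding any neighbourhood that the shrinking ball has left behind. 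The heart of the matter is a scale-by-scale accounting showing that this schedule never falls irrecoverably behind; this is precisely the content of \cite[Appendix C]{FSU}, to which I would defer for the details. Granting both implications, Lemma \ref{L:HPW} follows.
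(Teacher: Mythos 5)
The paper offers no independent proof of this lemma: it is stated verbatim as a special case of \cite[Theorem C.8]{FSU}, which is precisely your first sentence, so your approach coincides with the paper's. Your additional sketch of the two implications is a reasonable outline of the ideas behind the FSU argument (your worry about the re-submission bookkeeping in the HAW\,$\Rightarrow$\,HPW direction and the scale-by-scale scheduling in the converse are indeed where the technical content lies), but since you explicitly defer the details to the same reference, this is supplementary rather than a different route.
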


	\section{Proof of Theorems \ref{T:Gen1} and \ref{T:Gen2}}\label{S:pre}

     In this section we prove Theorems \ref{T:Gen1} and \ref{T:Gen2} by reducing them to Corollary 5.5 in \cite{KM}. For the sake of convenience we state one version of the aforementioned result here.

    \begin{theorem}[Corollary 5.5 in \cite{KM}]\label{T:KM1.5} Let $G$ be a Lie group, $\Gamma$ be a lattice in $G$, $F$ be a one-parameter $\Ad$-diagonalizable subgroup of $G$, $H=H(F^+)$ be the unstable horospherical subgroup with respect to $F^+$, and $X=G/\Gamma$. Then for any $x\in X$, the set $$
    E_H^+(x,\infty):=\{h\in H:F^+hx\text{ is bounded in }X\}
    $$
    is thick in $H$.        
    \end{theorem}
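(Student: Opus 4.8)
We may assume $F$ is nontrivial, so that $H$ is a nontrivial expanding horospherical subgroup; otherwise there is nothing to prove. The plan is to run a Cantor-type construction inside $H$, in the spirit of Schmidt's proof that badly approximable numbers are thick, using the flow $F^+$ to renormalize at every scale and feeding in the quantitative non-divergence estimates on $X=G/\Gamma$. First I would observe that, fixing an exhaustion $K_1\subseteq K_2\subseteq\cdots$ of $X$ by compact sets, one has $E_H^+(x,\infty)=\bigcup_n\{h\in H:F^+hx\subseteq K_n\}$, and a countable union does not decrease Hausdorff dimension; hence it suffices to produce, for every ball $B\subseteq H$ and every $\epsilon>0$, a compact set $K\subseteq X$ and a subset of $\{h\in B:F^+hx\subseteq K\}$ of Hausdorff dimension at least $\dim H-\epsilon$.

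Fix a time step $T>0$. Since $F$ is $\Ad$-diagonalizable with expanding horospherical subgroup $H$, all eigenvalues of $\Ad(g_T)$ on $\Lie(H)$ exceed $1$; let $e^{\lambda}>1$ denote the smallest of them. Then conjugation by $g_{kT}$ carries a ball of radius $r_k:=r_0 e^{-\lambda k}$ in $H$ (for a suitable fixed $r_0>0$) onto a box, adapted to the eigenspace decomposition of $\Lie(H)$, whose inner radius is bounded below by a fixed constant. I would use the quantitative non-divergence estimates on $X$ in the following uniform form: there are $C,\alpha>0$ depending only on $G$ and $\Gamma$, and for each small $\delta>0$ a compact set $K_\delta\subseteq X$ (increasing to $X$ as $\delta\to0$), such that for every $y\in X$ and every box $D\subseteq H$ as above,
\[
\Vol\big(\{\,\tilde h\in D:\ g_t\tilde h y\notin K_\delta\ \text{for some}\ 0\le t\le T\,\}\big)\ \le\ C\,\delta^{\alpha}\,\Vol(D).
\]

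Granting this, I would build by induction on $k\ge0$ a finite family $\mathcal{B}_k$ of pairwise disjoint closed balls in $H$ of radius $r_k$, each contained in a member of $\mathcal{B}_{k-1}$, such that $\{g_thx:0\le t\le kT\}$ stays in a fixed compact set $K$ for every $h$ lying in a member of $\mathcal{B}_k$; here $K$ is a fixed bounded enlargement of $K_\delta$, with $\delta$ chosen once and for all so small that $C\delta^{\alpha}$ is tiny (the enlargement serving only to absorb the bounded initial orbit segment). For the inductive step, given $B'\in\mathcal{B}_k$ with center $h'$, writing $h=h''h'$ with $h''=h(h')^{-1}$ ranging over a ball of radius $\asymp r_k$ about the identity identifies $\{g_{kT}hx:h\in B'\}$ with $\{\tilde h\cdot(g_{kT}h'x):\tilde h\in D\}$ for the box $D:=g_{kT}\big(B'(h')^{-1}\big)g_{-kT}\subseteq H$; applying the displayed estimate with $y=g_{kT}h'x$ and pulling back through $g_{kT}$ shows that, among the $\asymp(r_k/r_{k+1})^{\dim H}$ subballs of radius $r_{k+1}$ tiling $B'$, all but a proportion $\lesssim\delta^{\alpha}$ consist entirely of $h$ with $\{g_thx:kT\le t\le(k+1)T\}\subseteq K_\delta$; retain those in $\mathcal{B}_{k+1}$. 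The limit set $\mathcal{C}=\bigcap_k\bigcup_{B'\in\mathcal{B}_k}B'$ then lies in $\{h\in B:F^+hx\subseteq K\}$, and the standard mass-distribution bound for such self-similar Cantor sets gives $\dim\mathcal{C}\ge\dim H-\epsilon$ once $\delta$ is chosen small enough in terms of $\epsilon$. Letting $\epsilon\to0$ completes the argument.

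The crux is the uniform non-divergence estimate displayed above -- uniform over the base point $y\in X$ and over the boxes in $H$ produced by renormalization. This is precisely the technical heart of \cite{KM}: it amounts to showing that the cusp-distance functions restricted to $H$-orbits are $(C,\alpha)$-good with constants independent of $y$, and satisfy the non-contraction hypothesis of the Kleinbock--Margulis non-divergence theorem (for arithmetic $\Gamma$ this rests on reduction theory; the general case reduces to it, e.g.\ via products and Margulis arithmeticity in higher rank). Granting that input, the renormalization bookkeeping and the Hausdorff-dimension count sketched above are routine.
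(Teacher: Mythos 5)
Note first that the paper does not prove this theorem; it is quoted as Corollary 5.5 of Kleinbock--Margulis \cite{KM} and used as a black box, so there is no in-paper proof to compare your sketch against. Your sketch does follow the overall strategy of \cite{KM}: renormalize along the flow, feed quantitative nondivergence into a self-similar Cantor construction, and bound the Hausdorff dimension from below. However, two of the steps you flag as routine are exactly where the substance of \cite{KM} lies, and one of them is stated in a form that is actually false. The displayed nondivergence estimate cannot hold ``for every $y\in X$'': if $y$ lies deep enough in the cusp, then already at $t=0$ every $\tilde h\in D$ has $\tilde h y\notin K_\delta$, so the left-hand side equals $\Vol(D)$, not $\le C\delta^\alpha\Vol(D)$. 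The correct statement carries a non-contraction hypothesis (no single $\Gamma$-rational subgroup obstructs the whole box); your induction, which keeps $g_{kT}h'x$ in a fixed compact set, is the right way to ensure this, but the estimate must be stated with that hypothesis and its propagation must be verified.

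More seriously, converting ``the bad set has measure $\lesssim\delta^\alpha\Vol(D)$'' into ``all but a $\lesssim\delta^\alpha$ fraction of the radius-$r_{k+1}$ subballs consist entirely of good points'' is not a mass-distribution formality. Unless all the positive eigenvalues of $\Ad(g_T)$ on $\Lie(H)$ coincide, $\Ad(g_{kT})$ expands different eigenspaces at different rates, so a Euclidean subball of radius $r_{k+1}$ is carried to an increasingly eccentric box; a low-dimensional bad set of tiny measure can then meet a fraction of such tiles that does not shrink as $\delta\to0$. Getting the tile count right requires working with boxes and quasinorms adapted to the eigenvalue filtration and exploiting structure of the bad set beyond its measure---precisely what the $(C,\alpha)$-good framework of \cite{KM} delivers. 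As a minor point, the aside invoking Margulis arithmeticity is misplaced: the Dani--Margulis and Kleinbock--Margulis nondivergence estimates apply directly to arbitrary lattices in arbitrary Lie groups, with no reduction to the arithmetic case.
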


    \begin{proof}[Proof of Theorem \ref{T:Gen1}] We first recall some notations in Section \ref{S:intro}. Let $M$ be a locally symmetric space of non-compact type with finite Riemannian volume. Then $M$ can be identified with $Y=K\backslash G/\Gamma$, where $G$ is a semisimple Lie group without compact factors and with finite center, $K$ is a maximal compact subgroup of $G$, and $\Gamma$ is a lattice in $G$. Let $X=G/\Gamma$, $\fg=\Lie(G),\fk=\Lie(K)$, and $\fp\subset \fg$ be the orthogonal complement of $\fk$. Then it suffices to show that: for any point $y=[Kg\Gamma]\in Y$, the set 
    $$
    \{\bv\in\fp^1: \{\exp(t\bv)g\Gamma: t\geq 0\} \text{ is bounded in }X\}
    $$
    is thick in $\fp^1$.

    Let us choose a maximal abelian subspace $\fa\subset\fp$ and an open Weyl chamber $\fa_+\subset \fa$. Let $L=Z_K(\fa)$ and $\fa^1_+=\fa_+\cap \fp^1$. Then the map $$
    \theta: \fa^1_+\times L\backslash K\to \fp^1, \quad (\bv, Lk)\mapsto (\Ad k^{-1})\bv
    $$
    is a diffeomorphism onto an open dense subset of $\fp^1$. Then it suffices to prove that the set $$\{(\bv,Lk)\in \fa^1_+\times L\backslash K:\{\exp(t\bv)kg\Gamma: t\geq 0\} \text{ is bounded in }X\}$$ is thick in $\fa^1_+\times L\backslash K$. Moreover, by Marstrand's slicing and projection theorems, we only need to prove that: for any $\bv\in\fa^1_+$, the set $$
    E_K^+([g\Gamma],\infty):=\{k\in K: \{\exp(t\bv)kg\Gamma: t\geq 0\} \text{ is bounded in }X\}
    $$ 
    is thick in $K$. 

    Let $F_{\bv}^+=\{\exp(t\bv): t\geq 0\}$ be the one-parameter subsemigroup of $G$, and $H=H(F_\bv^+)$ (resp. $H^0=H^0(F_\bv^+),H^-=H^-(F_\bv^+)$)  be the unstable (resp. neutral, stable) horospherical subgroup with respect to $F_\bv^+$. 
    Let $B=H^0H$ and $B^-=H^-H^0$, and consider the action of the subgroup $K$ on the real flag variety $G/B$ by left translation. By Iwasawa decomposition, this action is transitive. It follows that for any $g\in G$, the map $$\phi_g: K\rightarrow G/B,\quad k\mapsto kgB$$ is a projection map with compact fiber. Write $K'=K\cap B^-Bg^{-1}=K\cap B^-Hg^{-1}$. Then $K\setminus K'=\phi_g^{-1}((G\setminus B^{-1}B)/B)$  is a finite union of manifolds of strict lower dimensions. So it suffices to show that $E_K^+([g\Gamma],\infty)\cap K'$ is thick in $K'$. 
    
    For $k\in K'$, we set $\sigma_1(k)\in B^-$ and $\sigma_2(k)\in H$ to be the unique pair satisfying $kg=\sigma_1(k)\sigma_2(k)$. It is easy to check that the map $\sigma_2$ is a projection map. On the other hand,
		\begin{equation*}
			F_{\bv}^+ kg\Gamma \text{ is bounded } \Longleftrightarrow  F_{\bv}^+ \sigma_2(k)\Gamma \text{ is bounded}.
		\end{equation*}
		Consequently, 
        $$E_K^+([g\Gamma],\infty)\cap K'=\sigma_2^{-1}(\{h\in H:F_{\bv}^+h\Gamma \text{ is bounded in }X\})=\sigma_2^{-1}(E_H^+([1_G\Gamma],\infty)).$$ 
        In view of Theorem \ref{T:KM1.5}, we see that $E_H^+([1_G\Gamma],\infty)$ is thick in $H$, and hence by Marstrand's slicing theorem $E_K^+([g\Gamma],\infty)\cap K'$ is thick in $K'$.
        This completes the proof.
    \end{proof}

    \begin{proof}[Proof of Theorem \ref{T:Gen2}] We begin with a simple observation, that is, it suffices to prove Theorem \ref{T:main2} for $x=[1_G\Gamma]\in X$. Indeed, for any $x'=[g\Gamma]\in X$ and one-parameter subgroup $F\subset G$, $F^+x'$ is bounded if and only if $g^{-1}F^+gx$ is bounded, which implies
		$$E^+(x', \infty)=\Ad(g)\left(E^+(x,\infty)\right).$$
		Thus it suffices to prove Theorem \ref{T:Gen2} for $x=[1_G\Gamma]\in X$.

    Let $p:\fg\setminus \{0\}\to \bP^+(\fg)$ denote the projection map. By Marstrand's projection and slicing theorems, to prove Theorem \ref{T:Gen2} is equivalent to show that the set $$
    \{\bv\in\fg: F_{\bv}^+\Gamma\text{ is bounded in }X\}
    $$
    is thick in $\fg$. We claim that we may reduce to the case when $G$ is a connected semisimple Lie group with trivial center and without compact factors for reasons below:

    First, let $G^0$ denote the identity component of $G$. It is clear that $\Lie(G^0)=\fg$ and that $X^0=G^0/(G^0\cap \Gamma)$ is a connected component of $X$ containing $[1_G\Gamma]$. In particular, $$
    F_{\bv}^+\Gamma\text{ is bounded in }X\Longleftrightarrow 
    F_{\bv}^+\Gamma\text{ is bounded in }X^0.
    $$
    So we may assume that $G=G^0$ is connected without loss of generality.
    
    Second, let $\fg=\fr\rtimes \fl$ be the Levi decomposition, where $\fr$ is the solvable radical of $\fg$ and $\fl$ is a Levi subalgebra of $\fg$. Let $\fl=\fl_c\oplus \fl_{nc}$ be the decomposition into simple ideals where $\fl_c$ is the direct sum of all compact simple ones. The projection map $\mathrm{d}q:\fg\to \fl_{nc}$ is a Lie algebra homomorphism, which induces a Lie group homomorphism $q:G\to L_{nc}$ and a projection map $X=G/\Gamma\to X_{nc}:=L_{nc}/q(\Gamma)$. Note that $q(\Gamma)$ is a lattice in $L_{nc}$, and the fiber of $X\to X_{nc}$ is a compact homogeneous space $RL_c/(RL_c\cap \Gamma)$ (see \cite{Da0}, Lemma 9.1). Then for $\bv\in\fg$, $$
    F_{\bv}^+\Gamma\text{ is bounded in }X\Longleftrightarrow 
    q(F_{\bv})^+q(\Gamma)=F_{\mathrm{d}q(\bv)}^+q(\Gamma) \text{ is bounded in }X_{nc}.
    $$
    It follows that 
    $$
    \{\bv\in\fg: F_{\bv}^+\Gamma\text{ is bounded in }X\}
    =(\mathrm{d}q)^{-1}\{\bw\in\fl_{nc}: F_{\bw}^+q(\Gamma)\text{ is bounded in }X_{nc}\}.
    $$
    By Marstrand's slicing theorem, it suffices to show that $$
    \{\bw\in\fl_{nc}: F_{\bw}^+q(\Gamma)\text{ is bounded in }X_{nc}\}
    $$
    is thick in $\fl_{nc}$. So we may assume that $G$ is a semisimple Lie group without compact factors.
    
    Finally, since $X=G/\Gamma$ is a finite cover of $G/(Z(G)\cdot \Gamma)$, we may replace $G$ by $G/Z(G)$ and $\Gamma$ by $(\Gamma\cdot Z(G))/Z(G)$ without loss of generality. 
    These verify the claim.
    
    Now let $G$ be a connected semisimple Lie group with trivial center and without compact factors. Let $G=K\cdot \exp(\fp)$ be the global Cartan decomposition, $\fa\subset\fp$ be a maximal abelian subspace, and $L=Z_K(\fa)$. We choose an open Weyl chamber $\fa_+\subset\fa$, and for any $\bv\in \fa_+$, let $H=H(F_\bv^+)$ (resp. $H^0=H^0(F_\bv^+),H^-=H^-(F_\bv^+)$)  be the unstable (resp. neutral, stable) horospherical subgroup with respect to $F_\bv^+$. Note that the subgroups $H,H^0,H^-$ only depend on the choice of the open Weyl chamber $\fa_+$. 

    Consider the map $$\theta_0: \fa_+\times Z_G(\fa_+)\backslash G \rightarrow \fg, \quad (\bv,Z_G(\fa_+)g)\mapsto \Ad(g^{-1})\bv.$$
    It is easy to check that $\theta$ is a diffeomorphism onto $\fg$. Using Bruhat decomposition, there is a diffeomorphism onto its image: $$
    H\times L\times H^-\to Z_G(\fa_+)\backslash G, \quad (h_1,h_2,h_3)\mapsto Z_G(\fa_+)h_3h_2h_1.
    $$
    It follows that the composition map
		$$\theta: \fa_+\times H\times L\times H^- \rightarrow \fg, \quad (\bv, h_1,h_2,h_3)\mapsto\Ad(h_1^{-1}h_2^{-1}h_3^{-1})\bv$$
     is also a diffeomorphism onto its image. Moreover, we see that $p$ is a projection map when restricted to $\mathrm{Im}(\theta)$, and that $\fg\setminus \mathrm{Im}\theta$ is a finite union of submanifolds of strict lower dimensions. Thus it suffices to prove that $$
    \{(\bv,h_1,h_2,h_3)\in\fa_+\times H\times L\times H^-: F_{\bv}^+h_3h_2h_1\Gamma\text{ is bounded in }X\}
    $$
    is thick on $\fa_+\times H\times L\times H^-$. But we have for any $\bv\in\fa_+$ $$
    F_{\bv}^+h_3h_2h_1\Gamma\text{ is bounded in }X\Longleftrightarrow F_{\bv}^+h_1\Gamma\text{ is bounded in }X.
    $$
    So the conclusion follows from Marstrand's slicing theorem and Theorem \ref{T:KM1.5}. This completes the proof.
    \end{proof}

	\section{Proof of Theorems \ref{T:main1} and \ref{T:main2}}\label{S:main}


In this section we prove Theorems \ref{T:main1} and \ref{T:main2} by assuming the truth of Theorem \ref{T:ru} below. The proof of Theorem \ref{T:ru} will be postponed to the next section.
 
	For any $\rr \in \cR:=(1/2,1)$, set
	\begin{equation*}\label{sldgt}
		\bv_\rr=\diag(\lambda,1-\lambda,-1)\in \fg,  F_{\rr}^+ = \{\exp(t\bv_{\rr}): t \ge 0\}\text{ and } F_{\rr}^- = \{\exp(-t\bv_{\rr}): t \ge 0\}.
	\end{equation*}
	Let $ U\subset G$ (resp. $U^- \subset G$) be the unipotent subgroup of upper triangular (resp. lower triangular) matrices and $A\subset G$ be the subgroup of diagonal matrices with positive entries.
	Then $U$ (resp. $U^-$) is the expanding (resp. contracting) subgroup of $F_{\rr}^+$ for any $\rr\in \cR$. Define
	\begin{equation*}\label{def-r-u}
		E_U(F_{\rr}^+):=\{u\in U: F_{\rr}^+u\Gamma \text{ is bounded in }X\}.
	\end{equation*}
	
	Both Theorem \ref{T:main1} and Theorem \ref{T:main2} will be deduced from the following theorem.
	\begin{theorem}\label{T:ru}
		The set
		\begin{equation}\label{defs}
			S:=\{(\rr,u)\in \cR\times U: u \in E_U(F_{\rr}^+)\}
		\end{equation}
		is HAW on $\cR\times U$.
	\end{theorem}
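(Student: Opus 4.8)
The plan is to show that $S$ is winning for the hyperplane potential game on the open subset $\cR\times U$ of $\RR^4$, using the global coordinates $u=u(\bx)$, $\bx\in\RR^3$, on the unipotent group $U$, and then to conclude that $S$ is HAW by Lemma~\ref{L:HPW} together with the manifold formalism of Section~\ref{hawmfld} and Lemma~\ref{L:HAW-mnfd}. By Mahler's compactness criterion, $F_\rr^+u(\bx)\Gamma$ is bounded in $X$ if and only if there is $\delta>0$ with $\|\exp(t\bv_\rr)u(\bx)\bw\|\ge\delta$ for all $t\ge0$ and all primitive $\bw\in\ZZ^3$. Since $\exp(t\bv_\rr)=\diag(e^{t\rr},e^{t(1-\rr)},e^{-t})$ and $u(\bx)$ is upper unitriangular, a short value of $\exp(t\bv_\rr)u(\bx)\bw$ forces $|w_3|\le e^t\delta$, a constraint on $(\bw,t)$ only, together with $|w_2+x_2w_3|\le e^{-t(1-\rr)}\delta$ and $|w_1+x_1w_2+x_3w_3|\le e^{-t\rr}\delta$. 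The latter two conditions place $\bx$ in a small neighborhood of one of the two \emph{$\rr$- and $t$-independent} affine hyperplanes $\{x_2=-w_2/w_3\}$ and $\{w_1+x_1w_2+x_3w_3=0\}$; only the admissible radius of the neighborhood depends on $\rr$, $t$ and $\delta$. Thus the complement of $S$ is a countable union of hyperplane neighborhoods in $\RR^4$, and Alice's task is to avoid them within the potential budget.

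The fiberwise input is that for each fixed $\rr\in\cR$ the set $E_U(F_\rr^+)$ is HAW on $U$, hence HPW on $U$ by Lemma~\ref{L:HPW}: this is the specialization of the main theorem of \cite{AGK} to $G=\SL_3(\RR)$, $\Gamma=\SL_3(\ZZ)$ and the semigroup $F_\rr^+$ (equivalently, the corresponding set of weighted badly approximable vectors is HAW). From its proof I would extract the quantitative strategy at a stage where the current ball has radius $\rho$: the primitive vectors $\bw$ that first become dangerous at scale $\rho$ form a height window $|w_3|\asymp\rho^{-c(\rr)}$ with return time $t_\bw\asymp\log(1/\rho)$, the corresponding dangerous set is covered by $O(1)$ hyperplane neighborhoods of radius $\asymp|w_3|^{-c'(\rr)}$, and the sum of the $\gamma$-th powers of these radii over all such $\bw$ is $\ll(\beta\rho)^\gamma$ by the lattice-point count $\#\{\bw:\|\bw\|\le T\}\ll T^3$; moreover a neighborhood of the right size removed at the first-danger scale traps all later dangerous sets of the same $\bw$, as these are sub-neighborhoods of the same hyperplane. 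The implied constants and the exponents $c(\rr),c'(\rr)$ are uniform --- indeed Lipschitz --- in $\rr$ over the closure of $\cR$.

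I would then prove the parametrized statement, which is Lemma~\ref{L:main2}. Any ball $B_i$ produced in the game on $\cR\times U$ projects to an interval $I_i\subset\cR$ of length $2\rho_i$ around its center parameter $\rr_i$. The key point is that moving $\rr$ across $I_i$ multiplies the factors $e^{-t(1-\rr)}$ and $e^{-t\rr}$ by $e^{O(t|I_i|)}$, and for a vector active at scale $\rho_i$ one has $t_\bw|I_i|\asymp\rho_i\log(1/\rho_i)\to 0$ as the game proceeds; consequently the set of vectors active at scale $\rho_i$ for \emph{some} $\rr\in I_i$ is comparable to that for $\rr_i$, and each associated dangerous set for any $\rr\in I_i$ is contained in a fixed dilate of the one for $\rr_i$. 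Therefore Alice plays at stage $i$ the ``vertical'' hyperplane neighborhoods $\{(\rr,u)\in\cR\times U:u\in\widetilde L^{(\rho')}\}$, where $\widetilde L\subset U$ runs over the affine hyperplanes prescribed by the fiberwise strategy at $\rr_i$ --- these are genuine hyperplane neighborhoods in $\RR^4$ whose radii are the fiberwise ones up to a bounded factor, which is absorbed by running the fiberwise strategy with a smaller parameter $\beta$; this realizes the choice of planes~(\ref{E:plane}). Any $(\rr_\infty,u_\infty)\in\bigcap_iB_i$ then has $u_\infty$ avoiding every dangerous set for $\rr_\infty$, so $u_\infty\in E_U(F_{\rr_\infty}^+)$ and $(\rr_\infty,u_\infty)\in S$; hence $S$ is HPW, and by Lemmas~\ref{L:HPW} and~\ref{L:HAW-mnfd} it is HAW on $\cR\times U$.

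The main obstacle is exactly this uniformity: one must verify that a single family of hyperplane neighborhoods, selected for the midpoint $\rr_i$ of $I_i$, simultaneously defeats the boundedness obstructions for \emph{every} $\rr\in I_i$ with radii enlarged by at most a bounded factor, and that the per-scale count of active vectors --- and hence the potential cost --- is uniform in $\rr\in\cR$. This demands a quantitative re-reading of the argument of \cite{AGK}, tracking the $\rr$-dependence of the height windows, the return times, and the small-value geometry of $\bw\mapsto\exp(t\bv_\rr)u(\bx)\bw$; once this ``locally constant in $\rr$'' behavior of the strategy is established, the remainder is bookkeeping.
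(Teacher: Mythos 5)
Your high-level outline matches the paper's: reduce to the hyperplane potential game in the global coordinates $\Psi:\Pi\times\RR^3\to\cR\times U$, observe that the boundedness obstructions live in $\rr$- and $t$-independent hyperplane directions, and argue that a single family of neighborhoods chosen at the midpoint $\rr_i$ of the current $\rr$-interval defeats the obstructions for \emph{all} $\rr$ in that interval. You also correctly diagnose the crux: establishing that the fiberwise strategy of \cite{AGK} is locally constant in $\rr$. So far so good.

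The gap is that you treat this crux as ``bookkeeping'', but it is not, and in particular your proposed mechanism for achieving the locally-constant strategy is incomplete. You propose to cover the dangerous set of a primitive $\bw$ by neighborhoods of the two ``obvious'' hyperplanes $\{x_2=-w_2/w_3\}$ and $\{w_1+x_1w_2+x_3w_3=0\}$. But the dangerous set $\Delta_\lambda(\bv,\epsilon)$ is a slab of very different widths in those two directions, and the potential budget does not close if you simply throw in both hyperplanes for every active $\bv$: the count of primitive vectors active at a given scale is too large. What the paper actually does (inherited and sharpened from \cite{AGK}) is to attach, to each pair $(\Omega,\bv)$, a \emph{single} dual integral vector $\bw(\Omega,\bv)=(a,b,c)$ via Minkowski's linear forms theorem, minimizing $\max\{|a|,|b+z_\Omega a|\}$, and to use the plane $ax+by+c=0$; the slab $\Delta_{\lambda_\Omega}(\bv,3\epsilon)$ is shown to lie in a thin neighborhood of this single plane (display \ref{E:final}). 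This Minkowski construction appears nowhere in your proposal.

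More importantly, your argument for why one plane per scale suffices does not appear. The paper's Lemma~\ref{L:const} proves that whenever two dangerous vectors $\bv_1,\bv_2$ at the same scale both meet $\Omega$, the inner products $\bv_1\cdot\bw_2$ and $\bv_2\cdot\bw_1$ are integers of absolute value strictly less than $1$ (Lemma~\ref{L:inequ}), hence zero, hence $\bw_1\parallel\bw_2$ and the planes coincide; a separate elementary analysis handles the degenerate case. This collinearity-by-integrality argument is the heart of the matter and cannot be replaced by your Lipschitz-in-$\rr$ remark, which only controls how the \emph{widths} of the slabs drift as $\rr$ moves across $I_i$ (that part corresponds to the paper's Lemmas~\ref{L:Rvalue} and~\ref{L:inc} and is correctly anticipated by you). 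Without the dual-vector construction and the integrality/collinearity step, the proposal does not yield Lemma~\ref{L:main2}, and with it the strategy does not close. You also have no analogue of Lemma~\ref{L:main1}, which is what lets Alice win in Case~(1) of the final argument (every primitive $\bv$ is eventually handled) rather than merely deferring obstructions indefinitely.
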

	\begin{remark}\label{r:slice}
		In \cite{AGK}, it is proved that $E_U(F_{\rr}^+)$ is HAW on $U$ for any $\rr\in \cR$. But Theorem \ref{T:ru} does not follow from this result. It is generally believed that for a general Borel subset in a product space, winning properties of all its slices do not necessarily imply the winning property of itself.
	\end{remark}

\begin{proof}[Proof of Theorem \ref{T:main1} modulo Theorem \ref{T:ru}.]
		We first recall basic facts about geodesic flows on $Y$ from \cite{Mau}.
		Let $\fk=\Lie(K)$ and $\fp\subset \fg$ be its orthogonal complement. Then for any $y=[Kg\Gamma]\in Y$, $S_y(Y)$ can be identified with $\fp^1=\{\bv\in \fp: \|\bv\|=1\}$. 
        Moreover, for any $y=[Kg\Gamma]\in Y$ and $\bv\in \fp^1$, the associated geodesic flow is given by
		\begin{equation*}\label{E:geodesic}\gamma(y,\bv)=\{K\exp(t\bv)g\Gamma: t\geq 0\}.\end{equation*}
		Since $K$ is compact, 
        we have 
        $$E^+(y,\infty)=\{\bv\in \fp^1: F_{\bv}^+g\Gamma \text{ is bounded in }X\}.$$

		The proof of Theorem \ref{T:main1} will be given into two steps. To begin, we consider the map
		\begin{equation*}
			\theta: \cR\times K\rightarrow \fp^1, \quad \theta(\rr, k)=\Ad(k^{-1})\bv_\rr',
			\text{ where } \bv_\rr'=\|\bv_\rr\|^{-1}\bv_\rr\in \fp^1.
		\end{equation*}
		It is easy to check that $\fp^1\setminus \big(\mathrm{Im}\theta\cup -\mathrm{Im}\theta\big)$ is a finite union of manifolds of strict lower dimension. Hence in view of Lemma \ref{L:HAW-open}, it suffices to prove that $E^+(y,\infty)\cap \pm \mathrm{Im}\theta$ is HAW on $\pm \mathrm{Im}\theta$. By applying $\overline{\tau}:X\rightarrow X$ induced by the outer automorphism $\tau(g)=(g^T)^{-1}$ defined on $G$, we get
		$$E^+(y,\infty)\cap -\mathrm{Im}\theta=\mathrm{d}\tau(E^+(y',\infty)\cap \mathrm{Im}\theta), \text{ where }y'=(g^T)^{-1}\Gamma.$$
		Thus it suffices to show $E^+(y,\infty)\cap \mathrm{Im}\theta$ is HAW on $\mathrm{Im}\theta$ for any $y\in Y$. On the other hand, it is also easy to check that $\theta$ is a cover map. Hence in view of Lemma \ref{L:HAW-mnfd}(iii), it suffices to show that
		$$E':=\theta^{-1}(E^+(y,\infty))=\{(\rr,k)\in \cR\times K: F_\rr^+ kg\Gamma \text{ is bounded in }X\}.$$
		is HAW on $\cR\times K$.

		Let $ B\subset G$ (resp. $B^- \subset G$) be the subgroup of upper triangular (resp. lower triangular) matrices, and consider the action of the subgroup $K$ on the real flag variety $G/B$ by left translations. By Iwasawa's decomposition, this action is transitive and has finite stabilizer. It follows that for any $g\in G$, the map $\phi_g: K\rightarrow G/B$ defined by $\phi_g(k)= kgB$ is a cover map. 
        Write $K'=K\cap B^-Bg^{-1}=K\cap B^-Ug^{-1}$. Then $K\setminus K'=\phi_g^{-1}((G\setminus B^{-1}B)/B)$  is a finite union of manifolds of strict lower dimensions. Using Lemma \ref{L:HAW-open} again, it suffices to show that $E'':=E'\cap (\cR\times K')$ is HAW on $\cR\times K'$. For $k\in K'$, we set $\sigma_1(k)\in B^-$ and $\sigma_2(k)\in U$ to be the unique pair satisfying $kg=\sigma_1(k)\sigma_2(k)$. It is easy to check that the map $\sigma_2$ is a cover map. On the other hand,
		\begin{equation*}
			F_\rr^+ kg\Gamma \text{ is bounded in }X \Longleftrightarrow  F_\rr^+ \sigma_2(k)\Gamma \text{ is bounded in }X.
		\end{equation*}
		Consequently, $E''=(\mathrm{Id}\times \sigma_2)^{-1}S$. Thus in view of Lemma \ref{L:HAW-mnfd}(iii) and Theorem \ref{T:ru}, we see that $E''$ is HAW on $\cR\times K'$.
        This completes the proof.
	\end{proof}

	\begin{proof}[Proof of Theorem \ref{T:main2} modulo Theorem \ref{T:ru}.]
		We begin with a simple observation, that is, it suffices to prove Theorem \ref{T:main2} for $x=[1_G\Gamma]\in X$. Indeed, for any $x'=[g\Gamma]\in X$ and one-parameter subgroup $F\subset G$, $Fx'$ is bounded if and only if $g^{-1}Fgx$ is bounded, which implies
		$$E^+(x', \infty)=\Ad(g)\left(E^+(x,\infty)\right).$$
		Hence in view of Lemma \ref{L:HAW-mnfd} (iii), it suffices to prove Theorem \ref{T:main2} for $x=[1_G\Gamma]\in X$.
		
		Consider the map
        $$\theta: \cR\times U\times U^- \rightarrow \fg, \quad (\rr, u_1, u_2)\mapsto\Ad(u_1^{-1}u_2^{-1})\bv_\rr$$
        It is straightforward to check that the map $\theta$ is a diffeomorphism onto its image. Let $p$ be the projection from $\fg\setminus\{0\}$ to $\bP^+(\fg)$. Then it can be checked that $p$ is a diffeomorphism onto its image when restricted on $\mathrm{Im}\theta\cup-\mathrm{Im\theta}$ and $\bP^+(\fg)\setminus (p(\mathrm{Im}\theta)\cup -p(\mathrm{Im}\theta))$ is a finite union of submanifolds of strict lower dimensions. In view of Lemma \ref{L:HAW-mnfd}(iv) and Lemma \ref{L:HAW-open}, it suffices to show that $E^+([1_G\Gamma],\infty)\cap \pm p(\mathrm{Im}\theta)$ is HAW on $\pm p(\mathrm{Im}\theta)$, respectively.      
        Set
		$$E^+(\infty)=\left\{\bv\in \fg: F_{\bv}^+\Gamma \text{ is bounded in  }  X \right\}.$$ Then $E^+([1_G\Gamma],\infty)=p(E^+(\infty)\setminus \{0\})$. We claim that Theorem \ref{T:main2} can be deduced from the following statement
		\begin{equation}\label{claim1}
			E^+(\infty)\cap \mathrm{Im}\theta \text{ is HAW on }  \mathrm{Im}\theta.
		\end{equation}
        
        Indeed, on one hand, it follows from Lemma \ref{L:HAW-mnfd}(iii) and (\ref{claim1}) that $$
        E^+([1_G\Gamma],\infty)\cap p(\mathrm{Im}\theta)=p(E^+(\infty)\cap\mathrm{Im}\theta) \text{ is HAW on } p(\mathrm{Im}\theta).
        $$
        On the other hand, by applying the map $\overline{\tau}:X\to X$ induced by the outer automorphism $\tau(g)=(g^T)^{-1}$ defined on $G$, we see that $$
        E^+([1_G\Gamma],\infty)\cap -p(\mathrm{Im}\theta)=
        (\mathrm{d}\tau\circ p)(E^+(\infty)\cap\mathrm{Im}\theta).
        $$
        It follows again from Lemma \ref{L:HAW-mnfd}(iii) and (\ref{claim1}) that $$
        E^+([1_G\Gamma],\infty)\cap -p(\mathrm{Im}\theta)\text{ is HAW on } (\mathrm{d}\tau\circ p)(\mathrm{Im}\theta)=-p(\mathrm{Im}\theta).
        $$
        These verify our claim.

        Finally, to prove (\ref{claim1}) it suffices to establish the HAW property of $\theta^{-1} \big(E^+(\infty)\big)$ on $\Pi\times U\times U^-$. But we have
		\begin{align*}
			(\rr, u_1,u_2)\in \theta^{-1} \big(E^+(\infty)\big)
			&\Longleftrightarrow F_\rr^{+}u_2u_1\Gamma \text{ is bounded }\\
			&\Longleftrightarrow (\rr, u_1)\in S,
		\end{align*}
		which means that $\theta^{-1} \big(E^+(\infty)\big)=S\times U^-$. So the conclusion follows from Lemma \ref{L:HAW-mnfd}(v) and Theorem \ref{T:ru}.
        This completes the proof.
	\end{proof}

		\section{Proof of Theorem \ref{T:ru}}

In this section we prove Theorem \ref{T:ru} modulo two key lemmas. 
        
		For $(x,y,z)\in\RR^3$, set
		\begin{equation*}\label{defu}
			u_{x,y,z}:=\begin{pmatrix}
				1&z&x \\&1&y\\&&1
			\end{pmatrix}\in U.
		\end{equation*}
		
		For technical reasons, we will prove Theorem \ref{T:ru} by applying Lemma \ref{L:HAW-mnfd} (iii) to the diffeomorphism $\Psi:\Pi\times \RR^3\to \cR\times U$ defined by
		$$\Psi\big(\rr,(x,y,z)\big)=\left(\lambda, u_{x,y,z}^{-1}\right).$$
		For simplicity, we write $\Sigma:=\Pi\times \RR^3$. It is directly checked that
		$$\Psi^{-1}(S)=\left\{\big(\lambda,(x,y,z)\big)\in \Sigma: F_\lambda^+u_{x,y,z}^{-1} \text{ is bounded}\right\}.$$
		So it suffices to prove $\Psi^{-1}(S)$ is HAW on $\Sigma$.
		
		For $\lambda\in\Pi$, $\epsilon>0$ and $\bv=(p,r,q)\in\ZZ^2\times\NN$, we set
		$$\Delta_\lambda(\bv,\epsilon):=\Big\{(x,y,z)\in\RR^3:\Big|x-\frac{p}{q}-z\Big(y-\frac{r}{q}\Big)\Big|<\frac{\epsilon}{q^{1+\lambda}}, \Big|y-\frac{r}{q}\Big|<\frac{\epsilon}{q^{2-\lambda}}\Big\},$$
		and
		\begin{equation}\label{E:S-epsilon}
			S_\lambda(\epsilon):=\RR^3\setminus\bigcup_{\bv\in\ZZ^2\times\NN}\Delta_\lambda(\bv,\epsilon).
		\end{equation}
		Note that if $\bv'=t\bv $ with $t>1$, then $\Delta_\lambda(\bv',\epsilon)\subset \Delta_\lambda(\bv,\epsilon)$. Thus,
		\begin{equation*}
			S_\lambda(\epsilon)=\RR^3\setminus\bigcup_{\bv\in\Xi}\Delta_\lambda(\bv,\epsilon), \text{ where } \Xi=\{(p,r,q)\in\ZZ^2\times\NN: \gcd(p,r,q)=1\}.
		\end{equation*}
		Moreover, we set
		\begin{equation*}
			\Delta(\bv,\epsilon)=\bigcup_{\lambda\in \Pi} \{\lambda\}\times \Delta_\lambda(\bv,\epsilon),\text{ and }  S(\epsilon):=\Sigma\setminus \bigcup_{\bv\in\Xi}\Delta(\bv,\epsilon)= \bigcup_{\lambda\in \Pi} \{\lambda\}\times S_\lambda(\epsilon).
		\end{equation*}
		
		According to \cite[Lemma 3.2]{AGK}, we know that $\{\lambda\}\times S_\lambda(\epsilon)\subset \Psi^{-1}(S)$ for any
		$\lambda\in \Pi$ and $\epsilon>0$. Hence $S(\epsilon)\subset \Psi^{-1}(S)$ for any $\epsilon>0$. Therefore, Theorem \ref{T:ru} can be deduced from the following lemma:
		\begin{lemma}\label{statement1}
			The set $S'=\bigcup_{\epsilon>0} S(\epsilon)$ is HAW on $\Sigma$.
		\end{lemma}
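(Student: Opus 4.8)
The goal is to show that $S'=\bigcup_{\epsilon>0}S(\epsilon)$ is HAW on $\Sigma=\Pi\times\RR^3$. Since $\Sigma$ is an open subset of $\RR^4$, by Lemma \ref{L:HPW} it suffices to show that $S'$ is HPW, i.e.\ winning for the hyperplane potential game for every $\beta\in(0,1)$ and $\gamma>0$. I plan to fix $\beta,\gamma$ and describe a winning strategy for Alice. Bob opens with a ball $B_0\subset\Sigma$; by shrinking (Alice may pass for finitely many moves, or we may simply assume) we fix once and for all a compact sub-box $\Pi_0\times Q_0$ containing the play, where $\Pi_0=[\lambda_-,\lambda_+]\subset\Pi$ is a closed sub-interval. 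The key point, flagged in the introduction, is that the \emph{same} family of ``dangerous'' affine hyperplanes can be used uniformly for all $\lambda\in\Pi_0$: for each $\bv=(p,r,q)\in\Xi$ the complement of $\Delta_\lambda(\bv,\epsilon)$ in the $(x,y,z)$-slice is cut out by two ``slab'' conditions, and I will enclose $\Delta(\bv,\epsilon)\cap(\Pi_0\times Q_0)$ inside a bounded number of neighborhoods of affine hyperplanes in $\RR^4$ whose defining data depend only on $\bv$ (not on $\lambda$), with total $\gamma$-cost controlled by a constant times $(\epsilon/q^{2-\lambda_+})^\gamma$ plus $(\epsilon/q^{1+\lambda_+})^\gamma$ — this is the ``locally constant in $\lambda$'' phenomenon and is exactly the content one extracts from the analysis in \cite{AGK}.

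**Key steps.** First I would make the reduction to the potential game precise and set up the compact box, noting $q^{-(2-\lambda)}\le q^{-(2-\lambda_+)}$ and $q^{-(1+\lambda)}\le q^{-(1+\lambda_-)}$ hold uniformly on $\Pi_0$, so shrinking $\Delta_\lambda(\bv,\epsilon)$ to a $\lambda$-independent set $\widetilde\Delta(\bv,\epsilon)\supset\Delta_\lambda(\bv,\epsilon)$ for all $\lambda\in\Pi_0$ loses nothing. Second, I would linearize: the set $\{|x-\tfrac pq-z(y-\tfrac rq)|<\delta_1,\ |y-\tfrac rq|<\delta_2\}$ is not an affine slab because of the bilinear term $z(y-\tfrac rq)$, but on the compact box $Q_0$ one has $|z|\le C$, so $\{|y-\tfrac rq|<\delta_2\}$ already forces $|z(y-\tfrac rq)|<C\delta_2$, and hence the first condition is contained in the affine slab $\{|x-\tfrac pq-z_0(y-\tfrac rq)|<\delta_1+C'\delta_2\}$ for any fixed reference value $z_0$ — or, cleanly, $\widetilde\Delta(\bv,\epsilon)$ lies in the intersection of the affine slab around the hyperplane $\{y=r/q\}$ (of width $\asymp\epsilon q^{-(2-\lambda_+)}$) with a $\lambda$-independent neighborhood of the hyperplane $\{x - z_0 y = p/q - z_0 r/q\}$ (of width $\asymp\epsilon q^{-(1+\lambda_-)}$). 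Both hyperplanes in $\RR^4_{(\lambda,x,y,z)}$ are affine with data depending only on $\bv$. Third, Alice's strategy: on her $i$-th move, responding to Bob's ball $B_i$ of radius $\rho_i$, she declares the countable family consisting of these (at most two) hyperplane neighborhoods for every $\bv=(p,r,q)\in\Xi$ with $q$ in a suitable dyadic range relative to a running parameter $\epsilon_i=c\beta^i\rho_0$ — exactly mimicking the single-slice potential-game strategy for $E_U(F_\lambda^+)$ from \cite{AGK}, but now with $\lambda$-uniform hyperplanes. Fourth, the cost bound: the number of $\bv\in\Xi$ with $q$ in a dyadic block and $p/q, r/q$ meeting $Q_0$ is $\lesssim q^{\#}$ for an explicit exponent, and one checks $\sum_k \rho_{i,k}^\gamma\le(\beta\rho_i)^\gamma$ by the same summation that works slice-by-slice in \cite{AGK}, the point being that the exponents $1+\lambda_-$ and $2-\lambda_+$ are still in the admissible range making the Borel–Cantelli-type sum converge (for $\Pi_0$ a proper sub-interval of $(1/2,1)$ this is where properness is used). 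Finally, any point surviving all of Alice's moves and not lying in any declared hyperplane neighborhood avoids every $\widetilde\Delta(\bv,\epsilon_\infty)$ for the limiting $\epsilon_\infty>0$, hence lies in $S(\epsilon_\infty)\subset S'$; combined with the potential-game ``or in some declared hyperplane'' clause this gives Alice the win.

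**Main obstacle.** The crux — and the only genuinely new input beyond \cite{AGK} and the abstract game machinery — is the second step: showing the obstruction sets $\Delta_\lambda(\bv,\epsilon)$ can be covered by neighborhoods of affine hyperplanes \emph{whose normal directions and offsets do not depend on $\lambda$}, with $\gamma$-cost still summable. The $\lambda$-dependence of the \emph{radii} $\epsilon q^{-(2-\lambda)}$, $\epsilon q^{-(1+\lambda)}$ is harmless (absorb into the worst-case exponent on the compact $\Pi_0$), but one must make sure the bilinear term $z(y-r/q)$ does not introduce a $\lambda$-dependent tilt; the compactness of $Q_0$ in the $z$-coordinate is what rescues this, and verifying the cost estimate after this linearization — tracking how the count of relevant $\bv$ and the exponents interact, uniformly over $\Pi_0$ — is the technical heart. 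Everything else (the reduction to HPW, the nesting, the passage to the limit $\epsilon_\infty$) is standard and parallels \cite{AGK,NS,FSU}.
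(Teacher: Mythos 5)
Your proposal has the right framing (reduce to the hyperplane potential game via Lemma~\ref{L:HPW}, look for $\lambda$-independent obstruction hyperplanes, pass through the limit $\epsilon_\infty>0$), and the linearization observation in your second step --- that on a compact $z$-box the bilinear term is harmless and $\Delta_\lambda(\bv,\epsilon)$ can be boxed by affine slabs whose data depend only on $\bv$ --- is genuinely part of the picture. But there is a gap at your fourth step, and it is not a technicality: it is the heart of the matter.

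Your strategy has Alice declare, at stage $i$, one or two hyperplane neighborhoods \emph{for every} $\bv=(p,r,q)\in\Xi$ with $q$ in a dyadic window, each of width roughly $\epsilon q^{-(2-\lambda_+)}$. The number of such $\bv$ with $\Delta_\lambda(\bv,\epsilon)$ meeting $B_i$ grows polynomially in $q$ (even after the constraints that $r/q$ and $p/q$ land near $B_i$), while each declared width is $\lesssim \epsilon q^{-(2-\lambda_+)}$. The sum $\sum \rho_{i,k}^\gamma$ therefore scales like $(\#\bv)\cdot(\epsilon q^{-(2-\lambda_+)})^\gamma$, and for small $\gamma$ this does \emph{not} stay below $(\beta\rho_i)^\gamma$. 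Indeed, the HPW property must hold for all $\gamma>0$, and a per-vector declaration cannot survive the small-$\gamma$ regime. This is exactly why \cite{AGK} does not declare a neighborhood per vector either; your ``exactly mimicking the single-slice potential-game strategy from \cite{AGK}'' assumes a strategy that is not actually what \cite{AGK} uses.

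The missing ingredient is the number-theoretic collapse in the paper's Lemmas~\ref{L:main2} and \ref{L:const}: for a fixed ball $\Omega\in\sB'_n$ and scale offset $k$, \emph{all} dangerous vectors $\bv\in\cV_{\Omega',k}$ with $\Omega'\subset\Omega$ give rise to the \emph{same} affine hyperplane $L_k(\Omega)$. This comes from a Minkowski linear-forms argument: each $\bv$ has a small dual vector $\bw(\Omega',\bv)=(a,b,c)$ with $\bv\cdot\bw=0$, and two different dangerous $\bv_1,\bv_2$ at the same scale satisfy $|\bv_1\cdot\bw_2|,|\bv_2\cdot\bw_1|<1$, forcing these integers to vanish and hence $\bw_1\parallel\bw_2$. (The one exceptional case, $k=1$ with the dangerous $\bv$'s spanning a line, is handled separately with the plane $\{x-p_0/q_0 - z_\Omega(y-r_0/q_0)=0\}$.) Thus Alice declares only $O(n)$ hyperplane neighborhoods of geometrically decaying radii $2R^{-(n+k)}\rho_0$, $k=1,\dots,n$, and the cost bound $\sum_k(2R^{-(n+k)}\rho_0)^\gamma\le(\beta\rho_i)^\gamma$ holds for all $\gamma>0$ once $R$ is large. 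Without this collapse, the $\lambda$-uniformity you correctly aim for does not by itself control the $\gamma$-cost, and the proof does not close. A secondary remark: your appeal to ``properness'' of $\Pi_0$ is not needed; the paper controls $\lambda$-variation purely by $\rho_\Omega\to 0$ (Lemma~\ref{L:Rvalue}), without shrinking the weight interval.
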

		
		The rest of this section is devoted to the proof of Lemma \ref{statement1}. For convenience, we endow $\Sigma$ with the product metric. For a closed ball $\Omega\subset \Sigma$, we denote its radius as $\rho_{\Omega}$ and its center as $(\lambda_\Omega, \omega_\Omega)$  with $\lambda_\Omega\in \Pi,\;\omega_\Omega=(x_\Omega, y_\Omega, z_\Omega)\in \RR^3$. Hence, we have
		$$\Omega=I_{\Omega}\times B_{\Omega}, \text{ where }I_{\Omega}=I(\lambda_\Omega,\rho_\Omega)\text{ and } B_{\Omega}= B(\omega_\Omega, \rho_\Omega).$$
		
		For a closed ball $\Omega\subset \Sigma$ and $\bv=(p,r,q)\in\Xi$, consider the set of integral vectors
		\begin{align*}
			\cW(\Omega,\bv):=\big\{(a,b,c)\in\ZZ^3: \ &  (a,b)\ne(0,0), ap+br+cq=0,\\
			& |a|\le e^{\sqrt{\rho_\Omega}}q^{\lambda_\Omega}, |b+z_\Omega a|\le e^{\sqrt{\rho_\Omega}}q^{1-\lambda_{\Omega}}\big\}.
		\end{align*}
		It follows from Minkowski's linear forms theorem that
		$\cW(\Omega,\bv)\ne\emptyset$. We choose and fix $$\bw(\Omega,\bv)=\big(a(\Omega,\bv),b(\Omega,\bv),c(\Omega,\bv)\big)\in\cW(\Omega,\bv)$$ such that
		\begin{align}
			&\ \max\big\{|a(\Omega,\bv)|,|b(\Omega,\bv)+z_\Omega a(\Omega,\bv)|\big\} \notag\\
			=&\ \min\left\{\max\big\{|a|,|b+z_\Omega a|\big\}:(a,b,c)\in\cW(\Omega,\bv)\right\}, \label{E:max}
		\end{align}
		and define
		$$H_\Omega(\bv):=q\max\big\{|a(\Omega,\bv)|,|b(\Omega,\bv)+z_\Omega a(\Omega,\bv)|\big\}.$$
		
		\begin{lemma}\label{L:q}
			For any closed ball $\Omega\subset\Sigma$ and $\bv=(p,r,q)\in\Xi$, we have
			\begin{equation}\label{E:qq}
				q\le H_\Omega(\bv)\le q^{1+\lambda_\Omega}.
			\end{equation}
		\end{lemma}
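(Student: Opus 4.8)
The plan is to establish the two inequalities in \eqref{E:qq} separately by elementary lattice-point arguments.

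For the lower bound, recall that $\bw(\Omega,\bv)=(a(\Omega,\bv),b(\Omega,\bv),c(\Omega,\bv))\in\cW(\Omega,\bv)$, so $(a(\Omega,\bv),b(\Omega,\bv))\ne(0,0)$. If $a(\Omega,\bv)\ne 0$ then $|a(\Omega,\bv)|\ge 1$; if $a(\Omega,\bv)=0$ then $b(\Omega,\bv)\ne 0$, so $|b(\Omega,\bv)+z_\Omega a(\Omega,\bv)|=|b(\Omega,\bv)|\ge 1$. In either case $\max\{|a(\Omega,\bv)|,|b(\Omega,\bv)+z_\Omega a(\Omega,\bv)|\}\ge 1$, and multiplying by $q$ gives $H_\Omega(\bv)\ge q$.

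For the upper bound I would first produce an element of $\cW(\Omega,\bv)$ meeting the sharp bounds $|a|\le q^{\lambda_\Omega}$ and $|b+z_\Omega a|\le q^{1-\lambda_\Omega}$, i.e. without the factor $e^{\sqrt{\rho_\Omega}}$. Apply Minkowski's linear forms theorem to the three linear forms $b+z_\Omega a$, $a$, $ap+br+cq$ in the variables $(a,b,c)\in\ZZ^3$, whose coefficient matrix has determinant $\pm q$ (expand along the $c$-column), with bounds $q^{1-\lambda_\Omega}$, $q^{\lambda_\Omega}$, $1$ respectively — taking the last one strict — so that the product of the bounds equals $q$. This yields a nonzero $(a,b,c)\in\ZZ^3$ with $|b+z_\Omega a|\le q^{1-\lambda_\Omega}$, $|a|\le q^{\lambda_\Omega}$ and $|ap+br+cq|<1$; since $ap+br+cq\in\ZZ$, this forces $ap+br+cq=0$. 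Moreover $(a,b)\ne(0,0)$, since otherwise $cq=0$ forces $c=0$, contradicting nontriviality; hence $(a,b,c)\in\cW(\Omega,\bv)$. Because $\lambda_\Omega>1/2$ and $q\ge 1$ we have $\max\{q^{\lambda_\Omega},q^{1-\lambda_\Omega}\}=q^{\lambda_\Omega}$, so this vector satisfies $\max\{|a|,|b+z_\Omega a|\}\le q^{\lambda_\Omega}$. The minimality condition \eqref{E:max} defining $\bw(\Omega,\bv)$ then gives $\max\{|a(\Omega,\bv)|,|b(\Omega,\bv)+z_\Omega a(\Omega,\bv)|\}\le q^{\lambda_\Omega}$, and multiplying by $q$ yields $H_\Omega(\bv)\le q^{1+\lambda_\Omega}$.

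I do not expect a genuine obstacle here; the only point needing a little care is invoking the version of Minkowski's linear forms theorem that permits the strict inequality forcing $ap+br+cq=0$. An equivalent and perhaps cleaner route is to apply Minkowski's convex body theorem directly to the rank-$2$ lattice $\{(a,b,c)\in\ZZ^3:ap+br+cq=0\}$, whose image under $(a,b,c)\mapsto(a,b+z_\Omega a)$ is a lattice of covolume $q$ in $\RR^2$ (using $\gcd(p,r,q)=1$), together with the symmetric box $\{|u|\le q^{\lambda_\Omega},\,|v|\le q^{1-\lambda_\Omega}\}$ of area $4q$; this produces the same good vector. The remaining bookkeeping is routine.
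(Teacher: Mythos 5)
Your proof is correct and follows essentially the same route as the paper: the lower bound comes from $(a,b)\neq(0,0)$ being integral, and the upper bound from Minkowski's linear forms theorem producing a vector in $\cW(\Omega,\bv)$ with $|a_0|\le q^{\lambda_\Omega}$, $|b_0+z_\Omega a_0|\le q^{1-\lambda_\Omega}$, then invoking the minimality condition \eqref{E:max} and $\lambda_\Omega>1/2$. You have simply made explicit the three linear forms and the determinant computation that the paper leaves to the reader, and the alternative rank-$2$ convex body argument is a clean equivalent reformulation.
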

		
		\begin{proof}
			The first inequality is obvious. By Minkowski linear forms theorem, $\cW(\Omega,\bv)$ contains a vector $(a_0,b_0,c_0)$ with $|a_0|\le q^{\lambda_\Omega}$ and $|b_0+z_\Omega a_0|\le q^{1-\lambda_\Omega}$. Thus, it follows from \eqref{E:max} that
			$$\max\big\{|a(\Omega,\bv)|,|b(\Omega,\bv)+z_\Omega a(\Omega,\bv)|\big\}\le\max\big\{|a_0|,|b_0+z_\Omega a_0|\big\}\le\max\{q^{\lambda_\Omega},q^{1-\lambda_\Omega}\}=q^{\lambda_\Omega}.$$
			Hence the second inequality.
		\end{proof}

		Let $\Omega_0\subset\Sigma$ be a closed ball of radius $\rho_0$. Then $\rho_0< 1/4$. Let $\kappa>2$ be such that
		\begin{equation}\label{E:kappa}
			\max\{|x|,|y|,|z|\}\le \kappa-1, \quad \forall \ (x,y,z)\in B_{\Omega_0}.
		\end{equation}
		Let $\beta\in(0,1)$, and $R$ and $\epsilon$ be positive numbers such that
		\begin{equation}\label{E:R}
			R\ge\max\{100\beta^{-1},10^4\kappa^4\} \text{ and } \sqrt{R^{-1}}\log R^4\le 1,
		\end{equation}
		and
		\begin{equation}\label{E:epsilon}
			\epsilon\le R^{-24}\rho_0.
		\end{equation}
		Let $\sB_0=\{\Omega_0\}$.
		For $n\ge1$, let $\sB_n$ be the family of closed balls defined by
		$$\sB_n:=\{\Omega\subset \Omega_0:\beta R^{-n}\rho_0<\rho_\Omega\le R^{-n}\rho_0\}.$$
		For a closed ball $\Omega$, if
		\begin{equation}\label{E:condition0}
			\text{$\Omega\in\sB_n$ for some $n\ge0$,}
		\end{equation}
		we define
		$$\cV_\Omega:=\{\bv\in\Xi: H_n\le H_\Omega(\bv)\le 3H_{n+1}\},$$
		where
		$$H_n:=20\epsilon \kappa \rho_0^{-1}R^n, \quad n\ge0.$$
		Note that since $R>\beta^{-1}$, the families $\sB_n$ are pairwise disjoint. So \eqref{E:condition0} is satisfied for at most one integer $n$, and hence $\cV_\Omega$ is well-defined.
		It follows from \eqref{E:qq} that if $\bv=(p,r,q)\in\cV_\Omega$, then
		\begin{equation}\label{E:q}
			H_n^{\frac{1}{1+\lambda_{\Omega}}}\le q\le 3H_{n+1}.
		\end{equation}
		Whenever \eqref{E:condition0} is satisfied, we also define
		$$\cV_{\Omega,1}:=\{(p,r,q)\in\cV_\Omega: H_n^{\frac{1}{1+\lambda_{\Omega}}}\le q\le H_n^{\frac{1}{1+\lambda_{\Omega}}}R^{20}\}$$
		and
		$$\cV_{\Omega,k}:=\{(p,r,q)\in\cV_\Omega: H_n^{\frac{1}{1+\lambda_{\Omega}}}R^{2k+16}\le q\le H_n^{\frac{1}{1+\lambda_{\Omega}}}R^{2k+18}\}, \quad k\ge2.$$
		
		\begin{lemma}\label{L:part}
			If $\Omega\in\sB_n$, then $\cV_\Omega=\bigcup_{k=1}^{[\frac{n}{2}]}\cV_{\Omega,k}$.
		\end{lemma}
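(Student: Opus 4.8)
The plan is to reduce the lemma to a single elementary inequality for the parameter $R$. Since $\bigcup_k\cV_{\Omega,k}\subseteq\cV_\Omega$ is clear from the definitions, only the reverse inclusion needs proof. Put $q_0:=H_n^{\frac1{1+\lambda_\Omega}}$. By \eqref{E:q}, every $\bv=(p,r,q)\in\cV_\Omega$ satisfies $q_0\le q\le 3H_{n+1}$, whereas $\cV_{\Omega,k}$ is the set of $\bv\in\cV_\Omega$ whose $q$ lies in $[q_0,q_0R^{20}]$ when $k=1$, and in $[q_0R^{2k+16},q_0R^{2k+18}]$ when $k\ge2$. These $q$-windows are laid end to end: the right endpoint $q_0R^{2k+18}$ of the $k$-th window is the left endpoint $q_0R^{2(k+1)+16}$ of the $(k+1)$-st, and $q_0R^{20}$ is the left endpoint of the $k=2$ window. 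Hence for any $K\ge1$ the union $\bigcup_{k=1}^{K}\cV_{\Omega,k}$ equals the set of $\bv\in\cV_\Omega$ with $q\in[q_0,q_0R^{2K+18}]$, and (when $n\ge2$, so that $[\frac n2]\ge1$) the lemma comes down to
$$3H_{n+1}\le q_0\,R^{2[\frac n2]+18}.$$

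To establish this inequality I would use $H_{n+1}=RH_n$ and $q_0^{\,1+\lambda_\Omega}=H_n$ to rewrite it as $3R\,H_n^{\lambda_\Omega/(1+\lambda_\Omega)}\le R^{2[\frac n2]+18}$. From \eqref{E:epsilon}, the bound $\kappa<R$ (a consequence of \eqref{E:R}), and $20<R$, one gets $H_n=20\epsilon\kappa\rho_0^{-1}R^n\le 20\kappa R^{n-24}<R^{n-22}$; since $\lambda_\Omega\in\Pi$ forces the exponent $\frac{\lambda_\Omega}{1+\lambda_\Omega}$ into $(0,1)$, this yields $H_n^{\lambda_\Omega/(1+\lambda_\Omega)}\le\max\{1,H_n\}<R^{n}$, so the left-hand side is $<3R^{n+1}<R^{n+2}$. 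Because $2[\frac n2]\ge n-1$, we have $2[\frac n2]+18\ge n+17\ge n+2$, which proves the inequality. Note that $\frac1{1+\lambda_\Omega}$ and $\frac{\lambda_\Omega}{1+\lambda_\Omega}$ enter only through these crude bounds, so the estimate is uniform over $\lambda_\Omega\in\Pi$.

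Finally, in the remaining case $n\in\{0,1\}$ we have $[\frac n2]=0$ and the right-hand union is empty, so I would check directly that $\cV_\Omega=\emptyset$: every $\bv=(p,r,q)\in\Xi$ has $q\ge1$, while $3H_{n+1}\le 3H_2=3R^2H_0\le 60\kappa R^{-22}\le 60R^{-21}<1$ by \eqref{E:epsilon} and \eqref{E:R}.

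I do not expect a genuine obstacle here; once the $q$-windows of the sets $\cV_{\Omega,k}$ are lined up consecutively, the argument is just bookkeeping of powers of $R$. The one point that has to come out right — and the reason the index runs up to exactly $[\frac n2]$ — is that each window advances the exponent of $R$ by $2$ while $H_n$ grows like $R^n$, so on the order of $n/2$ windows are needed to overtake $3H_{n+1}$; the slack that makes $[\frac n2]$ suffice is provided by the additive constants $16,18,20$ and by $\epsilon$ being as small as $R^{-24}\rho_0$.
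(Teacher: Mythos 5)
Your proof is correct and follows essentially the same strategy as the paper's: both arguments reduce the lemma, via the consecutive $q$-window structure of $\cV_{\Omega,k}$ together with \eqref{E:q}, to comparing $3H_{n+1}$ against $H_n^{1/(1+\lambda_\Omega)}$ times a suitable power of $R$. The paper phrases this as showing $\cV_{\Omega,k}=\emptyset$ for $k>[n/2]$ by contradiction, whereas you show directly that the windows up to $k=[n/2]$ already cover the admissible range $[H_n^{1/(1+\lambda_\Omega)},\,3H_{n+1}]$, and you additionally spell out the trivial cases $n\in\{0,1\}$ (where $\cV_\Omega=\emptyset$), which the paper leaves implicit; these differences are cosmetic.
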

		
		\begin{proof}
			In view of \eqref{E:q}, it suffices to show that if $k>[\frac{n}{2}]$ then $\cV_{\Omega,k}=\emptyset$. Suppose to the contrary that $\cV_{\Omega,k}\ne\emptyset$ for some $k>[\frac{n}{2}]$. Let $(p,r,q)\in\cV_{\Omega,k}$. Then
			$$3H_{n+1}\ge q\ge H_n^{\frac{1}{1+\lambda_{\Omega}}}R^{2k+16}> H_n^{\frac{1}{1+\lambda_{\Omega}}}R^{n+15}.$$
			But
			$$ H_n^{-\frac{1}{1+\lambda_{\Omega}}}R^{-n-15}\cdot 3H_{n+1}=3(20\epsilon\kappa \rho_0^{-1})^{\frac{\lambda_{\Omega}}{1+\lambda_{\Omega}}}R^{-\frac{1}{1+\lambda_{\Omega}}n-14}<1,$$
			a contradiction.
		\end{proof}

		Next, we inductively define a subfamily $\sB'_n$ of $\sB_n$ as follows. Let $\sB'_0=\{\Omega_0\}$. If $n\ge1$ and $\sB'_{n-1}$ has been defined, we let
		\begin{equation}\label{E:B'}
			\sB'_n:=\Big\{\Omega\in\sB_n:\Omega\subset \Omega' \text{ for some } \Omega'\in\sB'_{n-1},  \text{ and } \Omega\cap\bigcup_{\bv\in\cV_\Omega}\Delta(\bv, \epsilon)=\emptyset\Big\}.
		\end{equation}
		
		The following two lemmas concerning $\sB'_n$ are key steps in the proof of Theorem \ref{T:ru}. Their proofs are technical and postponed to the next two sections.
		
		\begin{lemma}\label{L:main1}
			Let $n\ge1$, $\Omega\in\sB'_n$ and $\bv=(p,r,q)\in\Xi$. If $q^{1+\lambda_\Omega}\le 3H_{n+1}$, then $\Delta(\bv, \epsilon)\cap \Omega=\emptyset$.
		\end{lemma}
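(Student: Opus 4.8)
The plan is to show that the hypotheses force $\bv \in \cV_\Omega$, so that the defining property of $\sB'_n$ in \eqref{E:B'} immediately gives $\Delta(\bv,\epsilon)\cap\Omega=\emptyset$. Recall $\cV_\Omega = \{\bv\in\Xi: H_n \le H_\Omega(\bv)\le 3H_{n+1}\}$, so I need to establish the two inequalities $H_n\le H_\Omega(\bv)$ and $H_\Omega(\bv)\le 3H_{n+1}$. The second is the easy one: by Lemma \ref{L:q} we have $H_\Omega(\bv)\le q^{1+\lambda_\Omega}$, and the hypothesis $q^{1+\lambda_\Omega}\le 3H_{n+1}$ closes it directly.

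For the first inequality, $H_n \le H_\Omega(\bv)$, I would argue by contradiction: suppose $H_\Omega(\bv) < H_n = 20\epsilon\kappa\rho_0^{-1}R^n$. The idea is that when $H_\Omega(\bv)$ is this small, the slab $\Delta_\lambda(\bv,\epsilon)$ is so wide (in both of its defining coordinates, after accounting for the linear relation $ap+br+cq=0$ from the vector $\bw(\Omega,\bv)$) that it must engulf the entire ball $\Omega$. Concretely: the cross-section of $\Delta_\lambda(\bv,\epsilon)$ transverse to the integer plane it approximates has thickness controlled from below by something like $\epsilon/(q\cdot\max\{|a|,|b+z_\Omega a|\}) = \epsilon/H_\Omega(\bv)$ in appropriate directions, while $\Omega$ has radius only $\rho_\Omega\le R^{-n}\rho_0$. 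The hypothesis $H_\Omega(\bv)<H_n$ makes $\epsilon/H_\Omega(\bv) > \epsilon\rho_0/(20\epsilon\kappa R^n) = \rho_0 R^{-n}/(20\kappa)$, and after tracking the geometry (using \eqref{E:kappa} to bound $z_\Omega$ and the other coordinates, and the variation of $\lambda$ over $I_\Omega$ via \eqref{E:R}) one gets that $\Delta_\lambda(\bv,\epsilon)$ covers $\Omega$ — but $\Omega\in\sB'_n\subset\sB_n$, so in particular $\Omega\subset\Omega_0$ and $\Omega\cap S(\epsilon)$ would have to be empty along every $\lambda$-slice, whereas the point is that $\Delta(\bv,\epsilon)$ covering $\Omega$ for a single $\bv$ with such small height contradicts the way $\sB'$ was built at level $n$ (or at an earlier level), since that $\bv$ would have belonged to some $\cV_{\Omega''}$ for an ancestor $\Omega''$. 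This ancestry bookkeeping — relating the height window $[H_m, 3H_{m+1}]$ of an ancestor at level $m\le n$ to the current height $H_\Omega(\bv)$ — is where the constants $R^{20}$, $R^{2k+16}$ in the definitions of $\cV_{\Omega,k}$ and the partition Lemma \ref{L:part} get used, and it is the step I expect to be the main obstacle: one must verify that if $H_\Omega(\bv)$ were below $H_n$ then $\bv$ lies in the height window of a \emph{unique} earlier level, and that the corresponding deleted slab $\Delta(\bv,\epsilon)$ already removed from an ancestor the region now occupied by $\Omega$, contradicting $\Omega\in\sB'_n$ (which requires $\Omega$ to sit inside a surviving ancestor $\Omega'\in\sB'_{n-1}$).

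A cleaner way to organize the same argument, avoiding the descent through ancestors, is to use Lemma \ref{L:q} in the other direction together with monotonicity of $H_\Omega(\bv)$ in the height window: since $\Omega\in\sB'_n$, by construction $\Omega\cap\bigcup_{\bv'\in\cV_\Omega}\Delta(\bv',\epsilon)=\emptyset$; so it suffices to rule out $H_\Omega(\bv)<H_n$ for a $\bv$ with $\Delta(\bv,\epsilon)\cap\Omega\ne\emptyset$. For such $\bv$, pick a point $(\lambda,\omega)\in\Delta(\bv,\epsilon)\cap\Omega$; then $\omega\in\Delta_\lambda(\bv,\epsilon)$ and a direct estimate — plugging the slab inequalities into the linear relation carried by $\bw(\Omega,\bv)=(a,b,c)$, namely multiplying $x - p/q - z(y-r/q)$ by $a$ and $y - r/q$ by $b+za$ and adding, using $ap+br+cq=0$ to see the rational parts telescope to an integer multiple of $1/q$ — forces $q\,\max\{|a|,|b+z_\Omega a|\}$, i.e. $H_\Omega(\bv)$ up to the $e^{\sqrt{\rho_\Omega}}$ fudge factors permitted by \eqref{E:R}, to be at least a definite multiple of $\epsilon^{-1}$ scaled by the relevant powers, and this lower bound is exactly $\gtrsim \epsilon\kappa\rho_0^{-1}R^n = H_n/20$ once one uses $\rho_\Omega\le R^{-n}\rho_0$ and $\epsilon\le R^{-24}\rho_0$ from \eqref{E:epsilon}. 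Thus $\bv\in\cV_\Omega$, and \eqref{E:B'} gives $\Delta(\bv,\epsilon)\cap\Omega=\emptyset$, contradicting the choice of $\bv$; hence no such $\bv$ with $\Delta(\bv,\epsilon)\cap\Omega\ne\emptyset$ and $q^{1+\lambda_\Omega}\le 3H_{n+1}$ exists, which is the claim. The bookkeeping with the $e^{\sqrt{\rho_\Omega}}$ factors and the variation of $q^{\lambda}$ across the interval $I_\Omega$ (bounded using the second part of \eqref{E:R}) is the only delicate point in this version.
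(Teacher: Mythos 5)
Your ``cleaner'' second route does not work, and that is the core gap. Having a point $(\lambda,(x,y,z))\in\Delta(\bv,\epsilon)\cap\Omega$ gives \emph{no} lower bound on $H_\Omega(\bv)$: writing $\bw(\Omega,\bv)=(a,b,c)$ and using $ap+br+cq=0$, the identity
$ax+by+c = a\bigl(x-\tfrac{p}{q}-z(y-\tfrac{r}{q})\bigr)+(b+za)\bigl(y-\tfrac{r}{q}\bigr)$
produces only the \emph{upper} bound $|ax+by+c|\lesssim \epsilon/q$, with no integrality to exploit since $x,y,z$ are real (they are not lattice points, so ``rational parts telescope to an integer multiple of $1/q$'' is not available). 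Geometrically this is the wrong direction: small $H_\Omega(\bv)$ means a \emph{wider} slab $\Delta_\lambda(\bv,\epsilon)$, which is more likely, not less, to meet $\Omega$. For the same reason the opening plan ``the hypotheses force $\bv\in\cV_\Omega$'' is simply false — $q^{1+\lambda_\Omega}\le 3H_{n+1}$ allows $H_\Omega(\bv)$ to be far below $H_n$, in which case $\bv\notin\cV_\Omega$ and the defining property \eqref{E:B'} of $\sB'_n$ cannot be invoked at level $n$.

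The ancestry argument in your first paragraph is the right idea and is what the paper does, but it is precisely the step you flag as ``the main obstacle'' that cannot be waved away. The difficulty is that $H_{\Omega'}(\bv)$ depends on the ball $\Omega'$, not just on $\bv$: as one passes to a larger ancestor $\Omega_k\supset\Omega_{k+1}$, the center $z_{\Omega_k}$, the weight $\lambda_{\Omega_k}$, and the radius $\rho_{\Omega_k}$ in the definition of $\cW(\Omega_k,\bv)$ all change. The paper handles this by fixing a nested chain $\Omega_n\subset\cdots\subset\Omega_0$ with $\Omega_k\in\sB'_k$, assuming for contradiction $\Delta(\bv,\epsilon)\cap\Omega_k\neq\emptyset$ (hence $\bv\notin\cV_{\Omega_k}$) for all $k$, choosing $m$ with $H_m\le q^{1+\lambda_n}\le H_{m+1}$, and then proving inductively that $H_{\Omega_k}(\bv)<H_k$ for $[\tfrac{m}{2}]\le k\le m$. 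The crux is a continuity-type inequality $H_{\Omega_k}(\bv)\le 3H_{\Omega_{k+1}}(\bv)$, established by the nontrivial verification $\bw(\Omega_{k+1},\bv)\in\cW(\Omega_k,\bv)$ (this is where the $e^{\sqrt{\rho}}$ fudge factors, Lemma~\ref{L:Rvalue}, and \eqref{E:R}, \eqref{E:epsilon} are actually spent). The induction terminates in the contradiction $q\le H_{\Omega_{[m/2]}}(\bv)< H_{[m/2]}\le \sqrt{H_m}\le\sqrt{q^{1+\lambda_n}}<q$. Your sketch omits exactly this estimate, which is the content of the lemma; without it the argument is incomplete.
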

		
		\begin{lemma}\label{L:main2}
			Let $n\ge1$, $\Omega\in\sB'_n$, and $k\in [1,n]$. There exists an affine plane $L_k(\Omega)\subset\Sigma$ such that for any $\Omega'\in\sB_{n+k}$ with $\Omega'\subset \Omega$ and any $\bv\in\cV_{\Omega',k}$, we have $$\Delta(\bv, \epsilon)\cap \Omega'\subset L_k(\Omega)^{(R^{-(n+k)}\rho_0)}.$$
		\end{lemma}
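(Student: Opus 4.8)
The plan is to attach to each pair $(\Omega,k)$ with $\Omega\in\sB'_n$ and $k\in[1,n]$ a single primitive integer relation $\bw_k(\Omega)=(a_k,b_k,c_k)$, depending on $\Omega$ and $k$ alone, and to show that every tube $\Delta(\bv,\epsilon)\cap\Omega'$ coming from a level-$(n+k)$ descendant $\Omega'\subset\Omega$ and a vector $\bv\in\cV_{\Omega',k}$ is confined to the $R^{-(n+k)}\rho_0$-neighborhood of the affine hyperplane of $\Sigma$ cut out by $\bw_k(\Omega)$.

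\textbf{Step 1 (geometry of one tube).} We may assume $\Delta(\bv,\epsilon)\cap\Omega'\neq\emptyset$, since otherwise any hyperplane works. For $\bv=(p,r,q)$ with this property, each slice $\Delta_\lambda(\bv,\epsilon)$ is a thin neighborhood of the line $\{x=p/q,\ y=r/q\}$, so meeting $\Omega'\subset\Omega$ forces $(p/q,r/q)$ to lie within $O(\rho_\Omega)$ of $(x_\Omega,y_\Omega)$, while the defining inequalities of $\cV_{\Omega',k}$ confine $q$ to a window of multiplicative width $R^{O(1)}$ determined by $n$, $k$ and $\lambda_\Omega$ alone. Crucially, since $\Omega$ is small one has $|\lambda_{\Omega'}-\lambda_\Omega|\le 2R^{-n}\rho_0$, so each occurrence of $\lambda_{\Omega'}$ may be replaced by $\lambda_\Omega$ at the cost of a bounded multiplicative factor — this is precisely where the uniformity in the weight parameter is exploited. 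Finally, Lemma \ref{L:main1} applied to $\Omega\in\sB'_n$ gives $q^{1+\lambda_\Omega}>3H_{n+1}$, so $q$ is large. Using also $|z-z_\Omega|\le\rho_\Omega$, one then controls, for $(\lambda,x,y,z)$ ranging over $\Delta(\bv,\epsilon)\cap\Omega'$, the three quantities $\big|x-\tfrac pq-z(y-\tfrac rq)\big|$, $\big|y-\tfrac rq\big|$ and the tilt $z(y-\tfrac rq)$ in terms of $\epsilon$, $q$, $\lambda_\Omega$ and $\rho_\Omega$.

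\textbf{Step 2 (rigidity: the relation is locally constant).} The heart of the argument — and the place where the hypothesis $\Omega\in\sB'_n$ is indispensable — is to show that all the relation vectors $\bw(\Omega',\bv)$ selected by \eqref{E:max} for these $\bv$ coincide, up to sign, with one primitive vector $\bw_k(\Omega)$ that is independent of $\Omega'$ and of $\lambda$. The mechanism is a geometry-of-numbers dichotomy: if two such vectors $\bv_1,\bv_2$ carried linearly independent short relations, then $\bv_1\times\bv_2$ would be a nonzero integer relation for $\bv_1$; since, by Step 1, $\bv_1$ and $\bv_2$ are almost parallel — both nearly proportional to the fixed vector $(x_\Omega,y_\Omega,1)$ — and of comparable size, $\bv_1\times\bv_2$ would be short enough to lie in $\cW(\Omega,\bv_1)$, and would therefore force $H_\Omega(\bv_1)$ below $H_{n+k}$, contradicting $\bv_1\in\cV_{\Omega'}$. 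Carrying this out cleanly is exactly the analysis of how the strategy of \cite{AGK} depends on the weight, and it rests on the constructions of the present section (the sets $\cW$, Lemma \ref{L:q}, the partition Lemma \ref{L:part}) together with the choices of $R$ and $\epsilon$ in \eqref{E:R}--\eqref{E:epsilon}. I expect this step to be the main obstacle, both because the estimates are tight and because one must verify that $\bw_k(\Omega)$ genuinely does not move as $\Omega'$ and $\lambda$ vary over their admissible values.

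\textbf{Step 3 (the plane and the containment).} Granting Step 2, I would set
\begin{equation}\label{E:plane}
L_k(\Omega):=\big\{\,(\lambda,x,y,z)\in\Sigma\ :\ a_k x+b_k y+c_k=0\,\big\},
\end{equation}
the affine hyperplane of $\Sigma$ cut out by $\bw_k(\Omega)=(a_k,b_k,c_k)$ — visibly independent of $\Omega'$, of $\bv$ and of the weight $\lambda$ (this is the locally constant plane referred to in the introduction; the point is that the $z$-tilt, being governed by $|z-z_\Omega|\le\rho_\Omega$, contributes only a lower-order error and does not enter the equation of the plane). For $(\lambda,x,y,z)\in\Delta(\bv,\epsilon)\cap\Omega'$ and $\bv=(p,r,q)\in\cV_{\Omega',k}$, the relation $a_k p+b_k r+c_k q=0$ yields
$$a_k x+b_k y+c_k=a_k\Big(x-\tfrac pq-z\big(y-\tfrac rq\big)\Big)+\big(a_k z+b_k\big)\Big(y-\tfrac rq\Big),$$
and one bounds the first summand by $|a_k|$ times the $\Delta_\lambda$-inequality and the second by $|a_k z_\Omega+b_k|$ (from the size control in $\cW(\Omega,\bv)$, together with the $|z-z_\Omega|\le\rho_\Omega$ correction) times the bound on $|y-r/q|$. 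Feeding in the lower bound on $q$ afforded by $\bv\in\cV_{\Omega',k}$ and the upper bound $H_\Omega(\bv)\le 3H_{n+k+1}$, a somewhat delicate computation using \eqref{E:R}--\eqref{E:epsilon} shows the total is smaller than $R^{-(n+k)}\rho_0$; since $\bw_k(\Omega)$ is a nonzero integer vector, the Euclidean distance from $(\lambda,x,y,z)$ to $L_k(\Omega)$ is bounded by the same quantity. Hence $\Delta(\bv,\epsilon)\cap\Omega'\subset L_k(\Omega)^{(R^{-(n+k)}\rho_0)}$, which is the assertion of Lemma \ref{L:main2}.
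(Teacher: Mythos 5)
Your Steps 1 and 3 match the structure of the paper's argument: Step 1 is essentially Lemma \ref{L:inc} plus the geometric input of Lemma \ref{L:inequ}, and Step 3 reproduces the computation proving \eqref{E:final} in the paper's case (i), where a single normal vector $\bw_k(\Omega)=(a_k,b_k,c_k)$ gives a hyperplane $\{a_kx+b_ky+c_k=0\}$ not involving $z$.

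The gap is in Step 2. You assert that the selected relations $\bw(\Omega',\bv)$ all coincide up to sign with a single $\bw_k(\Omega)$, and you propose to prove this by a dichotomy: if two admissible $\bv_1,\bv_2$ carry independent short relations, use $\bv_1\times\bv_2$ to get a contradiction. This misses the degenerate case where the set of admissible $\bv$'s is one-dimensional, i.e.\ all equal to a single vector $\bv_0$. In that case $\bv_1\times\bv_2=0$ identically and your rigidity mechanism gives nothing, yet two different sub-balls $\Omega'$ can still select non-proportional minimizers $\bw(\Omega',\bv_0)$ in the two-dimensional relation lattice $\{\bw:\bw\cdot\bv_0=0\}$, since the minimization criterion \eqref{E:max} depends on $z_{\Omega'}$ and $\lambda_{\Omega'}$. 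The paper's Lemma \ref{L:const} isolates exactly this situation as case (ii) (which can only occur for $k=1$) and handles it with a completely different plane,
\[
L_1(\Omega)=\Big\{(\lambda,x,y,z):\ x-\tfrac{p_0}{q_0}-z_\Omega\big(y-\tfrac{r_0}{q_0}\big)=0\Big\},
\]
built from $\bv_0$ itself rather than from a dual vector $\bw$. Note this plane does depend on $z_\Omega$, contrary to your parenthetical claim that the $z$-tilt contributes only a lower-order error and cannot enter the equation of the plane; the containment $(I_\Omega\times\Delta_\lambda(\bv_0,3\epsilon))\subset L_1(\Omega)^{(R^{-(n+1)}\rho_0)}$ then needs the separate computation the paper gives, using $\rho q_0\le 60\epsilon\kappa R^2$ and the lower bound $q_0\ge\tfrac13 H_{n+1}^{1/(1+\lambda_\Omega)}$.

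A secondary concern: even in the non-degenerate case, your sketch diverges from the paper's mechanism. The paper never forms $\bv_1\times\bv_2$; for $k=1$ it shows $\bv_1\cdot\bw_2$ and $\bv_2\cdot\bw_1$ are integers of absolute value $<1$ (Lemma \ref{L:inequ}), hence zero, forcing $\bw_1\parallel\bw_2$; for $k\ge2$ it considers $\bw_1\times\bw_2$ and rules out it being nonzero via Lemma \ref{L:main1}. Your claim that $\bv_1\times\bv_2$ is short enough to lie in $\cW(\Omega,\bv_1)$ needs verification and is not obviously true: its first component is roughly $q_1q_2|r_1/q_1-r_2/q_2|\approx 2\rho q^2$, and comparing this with the allowed size $e^{\sqrt{\rho}}q^{\lambda_\Omega}$ requires $\rho q^{2-\lambda_\Omega}\lesssim1$, which fails for $\lambda_\Omega$ near $\tfrac12$ given the size of $q$ dictated by $\bv\in\cV_{\Omega',k}$. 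So even the non-degenerate branch of Step 2 needs to be reorganized along the lines of the paper's Lemmas \ref{L:inequ} and \ref{L:const}, not the $\bv_1\times\bv_2$ shortcut.
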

		
		We now prove Lemma \ref{statement1} assuming the truth of Lemmas \ref{L:main1} and \ref{L:main2}.
		
		\begin{proof}[Proof of Lemma \ref{statement1}]
			By Lemma \ref{L:HPW}, it suffices to prove that $S'$ is HPW on $\Sigma$. Let $\beta\in(0,1)$, $\gamma>0$. We prove that Alice has a strategy to win the $(\beta,\gamma)$-hyperplane potential game on $\Sigma$ with target set $S'$. In the first round of the game, Bob chooses a closed ball $\Omega_0\subset\Sigma$. By \cite[Remark 2.4]{AGK}, we may without loss of generality assume that Bob will play so that $\rho_i:=\rho(\Omega_i)\to0$.  Let $\kappa$ and $R$ be positive numbers satisfying \eqref{E:kappa} and \eqref{E:R}. We also require that
			\begin{equation}\label{E:R2}
				(R^\gamma-1)^{-1}\le (\beta^2/2)^\gamma.
			\end{equation}
			Let $\epsilon, \sB_n, \cV_\Omega, \cV_{\Omega,k}$ and $\sB'_n$ be as above. Then Lemmas \ref{L:part}--\ref{L:main2} hold.
			Let Alice play according to the following strategy: Suppose that $i\ge0$, and Bob has chosen the closed ball $\Omega_i$. If there is $n\ge0$ such that
			\begin{equation}\label{E:condition}
				\text{$\Omega_i\in\sB'_n$, and $i$ is the smallest nonnegative integer with $\Omega_i\in\sB_n$,}
			\end{equation}
			then Alice chooses the family of neighborhoods $\{L_k(\Omega_i)^{(2R^{-(n+k)}\rho_0)}:k\in\NN\}$, where $L_k(\Omega_i)$ are the planes given in Lemma \ref{L:main2}. Otherwise, Alice makes an arbitrary move. Since $\Omega_i\in\sB_n$, we have that $\rho_i>\beta R^{-n}\rho_0$. This, together with \eqref{E:R2}, implies that
			$$\sum_{k=1}^\infty(2R^{-(n+k)}\rho_0)^\gamma=(2R^{-n}\rho_0)^\gamma(R^\gamma-1)^{-1}\le(\beta \rho_i)^\gamma.$$
			So Alice's move is legal. We prove that this guarantees a win for Alice, that is, the unique point $\bx_\infty=(\lambda_\infty, \omega_\infty)$ in $\bigcap_{i=0}^\infty \Omega_i$ lies in $$S'\cup\bigcup_{n\in\cN}\bigcup_{k=1}^n L_k(\Omega_{i_n})^{(2R^{-(n+k)}\rho_0)},$$ where
			$$\cN:=\{n\ge0:\text{ there exists } i=i_n \text{ such that \eqref{E:condition} holds}\}.$$ There are two cases.
			
			\medskip
			
			\textbf{Case (1).} Assume $\cN=\NN\cup\{0\}$. For any $\bv=(p,r,q)\in\Xi$, there is $n$ such that $q^{2}\le 3H_{n+1}$. Then it follows that  $q^{1+\lambda_n}\le  q^2\le 3H_{n+1}.$
			Since $n\in\cN$, we have that $\Omega_{i_n}\in\sB'_n$. Then Lemma \ref{L:main1} implies that $\Delta(\bv,\epsilon)\cap \Omega_{i_n}=\emptyset$. Thus $\bx_\infty\notin\Delta(\bv, \epsilon)$. It then follows from \eqref{E:S-epsilon} and the arbitrariness of $\bv$ that $\bx_\infty\in S(\epsilon)\subset S'$. Hence Alice wins.
			
			\medskip
			
			\textbf{Case (2).} Assume $\cN\ne\NN\cup\{0\}$. Let $n$ be the smallest nonnegative integer with $n\notin\cN$. Since $B_0\in\sB'_0$, we have $0\in\cN$. Hence $n\ge1$. Since $\rho_i\to0$ and $\rho_{i+1}\ge\beta \rho_i$, there must exist $i\ge1$ with $\Omega_i\in\sB_n$ and $\Omega_{i-1}\notin\sB_n$. It follows that $\Omega_i\notin\sB'_n$. Since $n-1\in\cN$, we have $\Omega_{i_{n-1}}\in\sB'_{n-1}$. Thus it follows from \eqref{E:B'} that  $\Omega_i\cap\bigcup_{\bv\in\cV_{\Omega_i}}\Delta(\bv,\epsilon)\ne\emptyset$. By Lemma \ref{L:part}, there is $k\in [1,[n/2]]$ and $\bv\in\cV_{\Omega_i,k}$ such that $\Omega_i\cap\Delta(\bv, \epsilon)\ne\emptyset$. Applying Lemma \ref{L:main2} to $\Omega=\Omega_{i_{n-k}}$ and $\Omega'=\Omega_i$, we obtain
			$\Omega_i\cap\Delta(\bv,\epsilon)\subset L_k(\Omega_{i_{n-k}})^{(R^{-n}\rho_0)}$.
			So $\Omega_i\cap L_k(\Omega_{i_{n-k}})^{(R^{-n}\rho_0)}\ne\emptyset$. In view of $\rho_i\le R^{-n}\rho_0$, it follows that
			$\bx_\infty\in \Omega_i\subset L_k(\Omega_{i_{n-k}})^{(2R^{-n}\rho_0)}$. Hence Alice also wins.
			
			\medskip
			
			This completes the proof of Theorem \ref{T:ru} modulo Lemmas \ref{L:main1} and \ref{L:main2}.
		\end{proof}

		\section{Proof of Lemma \ref{L:main1}}
		We shall need the following simple but useful lemma.
		\begin{lemma}\label{L:Rvalue}
			Let $n\ge 1$ and $\rho\in (0,R^{-n}\rho_0]$, then
			\begin{equation}\label{Rvalue}
				\rho\log H_{2n+2}\le \frac{1}{2}\sqrt{\rho}\le 1
			\end{equation}
		\end{lemma}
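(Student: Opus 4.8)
The plan is to split the claimed chain into its two halves, dispose of the easy half at once, and reduce the harder half to a single clean estimate. The right-hand inequality $\tfrac12\sqrt{\rho}\le 1$ is immediate: since $n\ge 1$, we have $\rho\le R^{-n}\rho_0\le\rho_0<\tfrac14$ (recall $\rho_0<1/4$), so $\sqrt{\rho}<\tfrac12$, and a fortiori $\tfrac12\sqrt{\rho}<\tfrac14<1$.

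For the left-hand inequality $\rho\log H_{2n+2}\le\tfrac12\sqrt{\rho}$, I would first note it is trivial when $H_{2n+2}\le 1$, since then the left side is $\le 0$. Otherwise, dividing by $\sqrt{\rho}>0$, it becomes $\sqrt{\rho}\,\log H_{2n+2}\le\tfrac12$. Because $\rho\le R^{-n}\rho_0$ and $\rho_0<\tfrac14$ give $\sqrt{\rho}<\tfrac12 R^{-n/2}$, the whole lemma follows from the single inequality
\[
\log H_{2n+2}\le R^{n/2},\qquad n\ge 1.
\]

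To establish this I would simply substitute the definitions. By the definition of $H_n$ and by \eqref{E:epsilon}, $H_{2n+2}=20\epsilon\kappa\rho_0^{-1}R^{2n+2}\le 20\kappa R^{2n-22}$. Since $R\ge 10^4\kappa^4$ (part of \eqref{E:R}) forces $\kappa\le R^{1/4}$, taking logarithms gives $\log H_{2n+2}\le \log 20+\tfrac14\log R+(2n-22)\log R\le \log 20+3n\log R$, where the last step uses $n\ge 1$ and $\log R>0$ to absorb the $-22$. Feeding in the remaining conditions in \eqref{E:R}, namely $4\log R\le\sqrt{R}$ and $R\ge 10^4$, bounds the right-hand side by $n\sqrt{R}$, and finally $n\sqrt{R}\le R^{n/2}$ because $R^{(n-1)/2}\ge 100^{\,n-1}\ge n$ for every $n\ge1$.

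I do not anticipate any genuine difficulty here: the lemma is pure constant bookkeeping, and the only point requiring mild care is that $\log H_{2n+2}$ can be negative — indeed $H_{2n+2}<1$ for small $n$, since $\epsilon$ is tiny relative to $\rho_0$ — which is exactly why I would phrase the reduction so that the case $H_{2n+2}\le 1$ is peeled off at the very start.
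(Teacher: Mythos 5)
Your proof is correct, but it takes a noticeably more computational route than the paper's. You expand $H_{2n+2}$ explicitly, bound $\kappa$ by $R^{1/4}$ using $R\ge 10^4\kappa^4$, absorb the constant $\log 20$ and the term $-22\log R$ by hand, and finish by verifying $n\le(\sqrt R)^{n-1}$ directly. The paper instead observes that $2\sqrt{\rho}\log H_{2n+2}\le\sqrt{R^{-n}}\log R^{2n+2}\le\sqrt{R^{-n}}\log R^{4n}$, and then invokes the monotone decrease of $x^{-1}\log x$ for $x>e$ (applied at $x=R^{n/2}$) to slide the exponent $n$ down to $1$, landing precisely on the hypothesis $\sqrt{R^{-1}}\log R^4\le 1$ from \eqref{E:R}. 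The paper's argument is shorter and makes transparent why that particular condition was placed in \eqref{E:R}; your argument is more elementary bookkeeping but reaches the same conclusion, and you handle the sign of $\log H_{2n+2}$ explicitly where the paper handles it only implicitly (the first inequality in its chain is trivially true when the log is negative). One small stylistic remark: since the paper already establishes $H_{2n+2}\le R^{2n+2}$, the case split on $H_{2n+2}\le 1$ in your write-up is unnecessary once you bound $\log H_{2n+2}$ from above by a positive quantity, though it does no harm.
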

		
		\begin{proof}
			The second inequality is direct. Since the function $x^{-1}\log x$ is monotonically decreasing when $x>e$, it follows that
			$$2\sqrt{\rho}\log H_{2n+2}\le \sqrt{R^{-n}}\log R^{2n+2}\le \sqrt{R^{-n}}\log R^{4n}\le \sqrt{R^{-1}}\log R^4\le 1.$$
			This completes the proof of the first inequality.
		\end{proof}
		\begin{proof}[Proof of Lemma \ref{L:main1}]
			We denote $\Omega_n=\Omega$, and let $\Omega_n\subset\cdots\subset \Omega_1\subset \Omega_0$ be such that $\Omega_k\in\sB'_k$. Write $\lambda_{\Omega_k}=\lambda_k$ and $\rho_{\Omega_k}=\rho_k$ for simplicity. Suppose to the contrary that the conclusion of the lemma is not true. Then there exists $\bv=(p,r,q)\in\ZZ^2\times\NN$ with $q^{1+\lambda_n}\le 3H_{n+1}$ such that $\Delta(\bv,\epsilon)\cap \Omega_k\ne\emptyset$ for every $1\le k\le n$. It follows from the definition of $\sB'_k$ that $\bv\notin\cV_{\Omega_k}$, that is,
			\begin{equation}\label{E:notin}
				H_{\Omega_k}(\bv)\notin[H_k,3H_{k+1}], \qquad 1\le k\le n.
			\end{equation}
			Let $1\le m\le n$ be such that
			\begin{equation}\label{E:n_0}
				H_{m}\le q^{1+\lambda_n}\le H_{m+1}.
			\end{equation}
			We inductively prove that
			\begin{equation}\label{E:induc}
				H_{\Omega_k}(\bv)<H_k, \qquad [\frac{m}{2}]\le k\le m.
			\end{equation}
			By letting $k=[\frac{m}{2}]$ in \eqref{E:induc},  we have
			$$q\le H_{\Omega_{[\frac{m}{2}]}}(\bv)\le 3H_{[\frac{m}{2}]+1}\le \sqrt{H_m}<q,$$
			which is a contradiction. So Lemma \ref{L:main1} follows from \eqref{E:induc}.
			Since
			\begin{align*}
				H_{\Omega_{m}}(\bv) \le q^{1+\lambda_{m}}\le q^{|\lambda_{m}-\lambda_{n}|}q^{1+\lambda_{n}} \le H_{m+1}^{\rho_m} H_{m+1}=e^{\rho_m\log H_{m+1}}H_{m+1}\le 3H_{m+1},
			\end{align*}
			it follows from \eqref{E:notin} that
			\eqref{E:induc} holds for $k=m$. Suppose that $[\frac{m}{2}]\le k\le m-1$ and \eqref{E:induc} holds if $k$ is replaced by $k+1$.
			We prove that
			\begin{equation}\label{E:conti}
				H_{\Omega_k}(\bv)\le 3H_{\Omega_{k+1}}(\bv).
			\end{equation}
			To prove \eqref{E:conti}, the first step is to establish
			\begin{equation}\label{E:k+1}
				\bw(\Omega_{k+1},\bv)\in\cW(\Omega_k,\bv).
			\end{equation}
			Write 
            $\bw(\Omega_k,\bv)=(a_k,b_k,c_k)$ and $z_{\Omega_k}=z_k$.
			Since $\bw(\Omega_{k+1},\bv)\in\cW(\Omega_{k+1},\bv)$, we have 
			$$|a_{k+1}| \le e^{\sqrt{\rho_{k+1}}}q^{\lambda_{k+1}} \quad\text{ and } \quad |b_{k+1}+z_{k+1}a_{k+1}| \le e^{\sqrt{\rho_{k+1}}}q^{1-\lambda_{k+1}}.$$
			Note that $\sqrt{\rho_{k+1}}\le \sqrt{\beta^{-1} R^{-1}}\sqrt{\rho_k}\le \frac{1}{10}\sqrt{\rho_k}$, and by Lemma \ref{L:Rvalue}
            $$
                q^{|\lambda_{k+1}-\lambda_k|}\leq e^{\rho_k\log q} \leq e^{\rho_k\log H_{m+1}}\leq e^{\frac{1}{2}\sqrt{\rho_k}}.
            $$
			Now we are in position to prove \eqref{E:k+1}. Firstly, we have
			\begin{align*}
				|a_{k+1}| &\le e^{\sqrt{\rho_{k+1}}}q^{\lambda_{k+1}}\\
				&\leq\left(e^{\sqrt{\rho_{k+1}}-\sqrt{\rho_{k}}}q^{|\lambda_{k+1}-\lambda_{k}|}\right)e^{\sqrt{\rho_{k}}}q^{\lambda_{k}}\\
				&\le \left(e^{\sqrt{\rho_{k+1}}-\frac{1}{2}\sqrt{\rho_{k}}}\right) e^{\sqrt{\rho_{k}}}q^{\lambda_{k}}
				\\
				&\le e^{\sqrt{\rho_{k}}}q^{\lambda_{k}}.
			\end{align*}
			Secondly, we have
			\begin{align*}
				|b_{k+1}+z_ka_{k+1}|
				&\le |b_{k+1}+z_{k+1}a_{k+1}|+|a_{k+1}||z_k-z_{k+1}|\\
				&\le |b_{k+1}+z_{k+1}a_{k+1}|+|a_{k+1}|\rho_k \\
				&\le \big(1+|a_{k+1}|\rho_k\big)e^{\sqrt{\rho_{k+1}}}q^{1-\lambda_{k+1}} \\
				&\le \left(1+q^{-1}H_{\Omega_{k+1}}(\bv)\rho_k\right) \left(e^{\sqrt{\rho_{k+1}}-\sqrt{\rho_{k}}}q^{|\lambda_{k+1}-\lambda_{k}|}\right)
				e^{\sqrt{\rho_{k}}}q^{1-\lambda_{k}} \\
				&\le \left(1+\sqrt{H_m^{-1}}H_{k+1}\rho_k\right) \left(e^{\sqrt{\rho_{k+1}}-\frac{1}{2}\sqrt{\rho_{k}}}\right)
				e^{\sqrt{\rho_{k}}}q^{1-\lambda_{k}} \\
				&\le \left(1+\sqrt{20\epsilon \kappa \beta^{-1}R^2}\sqrt{\rho_{k+1}}\right) \left(e^{\sqrt{\rho_{k+1}}-\frac{1}{2}\sqrt{\rho_{k}}}\right)
				e^{\sqrt{\rho_{k}}}q^{1-\lambda_{k}} \\
				&\le \left(1+\sqrt{\rho_{k+1}}\right)\left(e^{\sqrt{\rho_{k+1}}-\frac{1}{2}\sqrt{\rho_{k}}}\right) e^{\sqrt{\rho_{k}}}q^{1-\lambda_{k}} \\
				&\le  e^{\sqrt{\rho_{k}}}q^{1-\lambda_{k}}. 
			\end{align*}			
			Hence \eqref{E:k+1} holds. It then follows from \eqref{E:k+1} and the definition of $\bw(\Omega_k,\bv)$ that
			\begin{align*}
				H_{\Omega_k}(\bv)&=q\max\{|a_k|,|b_k+z_ka_k|\}\\
				&\le q\max\{|a_{k+1}|,|b_{k+1}+z_ka_{k+1}|\}\\
				&\le 3q\max\{|a_{k+1}|,|b_{k+1}+z_{k+1}a_{k+1}|\}\\
				&=3H_{\Omega_{k+1}}(\bv).
			\end{align*}
			This proves \eqref{E:conti}.
			
			It follows from \eqref{E:conti} and the induction hypothesis that $H_{\Omega_k}(\bv)< 3H_{k+1}$.
			By \eqref{E:notin}, we have $H_{\Omega_k}(\bv)<H_k$.
			Thus \eqref{E:induc} is proved.			
		\end{proof}

		\section{Proof of Lemma \ref{L:main2}}\label{S:pf_main}
		
		In this section, we prove Lemma \ref{L:main2}. We first establish the following lemma.
		\begin{lemma}\label{L:inc}
			Let $n\ge 1$, $\Omega\in \sB_n$ and $\bv=(p,r,q)\in \Xi$ with $q\le H_{2n+2}$. Then
			\begin{equation}\label{e:inc}
				\left( I_{\Omega}\times\Delta_{\lambda_{\Omega}}(\bv,\epsilon/3)\right)\cap \Omega\subset \Delta(\bv,\epsilon)\cap \Omega \subset \left(I_{\Omega}\times\Delta_{\lambda_{\Omega}}(\bv,3\epsilon)\right)\cap \Omega.
			\end{equation}
		\end{lemma}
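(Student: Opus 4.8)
The plan is to deduce both inclusions from a single pointwise comparison of the slices $\Delta_\lambda(\bv,\cdot)$ and $\Delta_{\lambda_\Omega}(\bv,\cdot)$, exploiting the fact that a generic point of $\Omega=I_\Omega\times B_\Omega$ has the form $\big(\lambda,(x,y,z)\big)$ with $|\lambda-\lambda_\Omega|\le\rho_\Omega$ (in the product metric) and $(x,y,z)\in B_\Omega$. When one tests a fixed triple $(x,y,z)$ for membership in $\Delta_\lambda(\bv,\cdot)$ versus in $\Delta_{\lambda_\Omega}(\bv,\cdot)$, the nonlinear quantity $x-p/q-z(y-r/q)$ is left untouched, and the only change on the right-hand sides of the two defining inequalities is the replacement of the denominators $q^{1+\lambda}$, $q^{2-\lambda}$ by $q^{1+\lambda_\Omega}$, $q^{2-\lambda_\Omega}$. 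So everything comes down to controlling the multiplicative error $q^{|\lambda-\lambda_\Omega|}$ uniformly over $\Omega$.

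The key estimate I would record first: for $\bv=(p,r,q)\in\Xi$ with $q\le H_{2n+2}$ and $|\lambda-\lambda_\Omega|\le\rho_\Omega$, since $\Omega\in\sB_n$ forces $\rho_\Omega\in(0,R^{-n}\rho_0]$ and hence $\rho_\Omega\log H_{2n+2}\le 1$ by Lemma \ref{L:Rvalue}, we get
\[
q^{|\lambda-\lambda_\Omega|}\le e^{\rho_\Omega\log q}\le e^{\rho_\Omega\log H_{2n+2}}\le e<3 .
\]
In particular both $q^{\lambda-\lambda_\Omega}$ and $q^{\lambda_\Omega-\lambda}$ lie in $(1/3,3)$.

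Granting this, the two inclusions are immediate and symmetric. For the left one, take $\big(\lambda,(x,y,z)\big)\in\big(I_\Omega\times\Delta_{\lambda_\Omega}(\bv,\epsilon/3)\big)\cap\Omega$; substituting $q^{-(1+\lambda_\Omega)}=q^{\lambda-\lambda_\Omega}q^{-(1+\lambda)}$ and $q^{-(2-\lambda_\Omega)}=q^{\lambda_\Omega-\lambda}q^{-(2-\lambda)}$ into the two inequalities that witness $(x,y,z)\in\Delta_{\lambda_\Omega}(\bv,\epsilon/3)$ and using $q^{|\lambda-\lambda_\Omega|}<3$ upgrades them to the two inequalities defining $\Delta_\lambda(\bv,\epsilon)$, so the point lies in $\Delta(\bv,\epsilon)\cap\Omega$. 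The right inclusion is the same computation read backwards: from $(x,y,z)\in\Delta_\lambda(\bv,\epsilon)$ one multiplies the defining inequalities by $q^{\lambda_\Omega-\lambda}$, resp.\ $q^{\lambda-\lambda_\Omega}$, and again invokes $q^{|\lambda-\lambda_\Omega|}<3$ to conclude $(x,y,z)\in\Delta_{\lambda_\Omega}(\bv,3\epsilon)$, hence $\big(\lambda,(x,y,z)\big)\in\big(I_\Omega\times\Delta_{\lambda_\Omega}(\bv,3\epsilon)\big)\cap\Omega$.

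I do not expect a genuine obstacle: this is a bookkeeping lemma whose only content is that on a ball $\Omega\in\sB_n$ an integer $q\le H_{2n+2}$ cannot distort the weight parameter by more than a bounded factor, and the hypothesis $q\le H_{2n+2}$ is calibrated --- through Lemma \ref{L:Rvalue} --- precisely so that $q^{\rho_\Omega}<3$. The one point to keep in mind throughout is that one must vary $\lambda$ only and never $(x,y,z)$, so that the nonlinear term $z(y-r/q)$ is literally identical on both sides of every comparison; with that observation the whole argument is elementary arithmetic.
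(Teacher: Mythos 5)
Your proposal is correct and follows essentially the same route as the paper: both reduce the inclusions to the single estimate $q^{|\lambda-\lambda_\Omega|}\le 3$ for $\lambda\in I_\Omega$, which is obtained from Lemma~\ref{L:Rvalue} via $q^{|\lambda-\lambda_\Omega|}\le e^{\rho_\Omega\log q}\le e^{\rho_\Omega\log H_{2n+2}}\le e<3$. The only difference is that you spell out the substitution $q^{-(1+\lambda_\Omega)}=q^{\lambda-\lambda_\Omega}q^{-(1+\lambda)}$ and its companion explicitly, where the paper leaves this bookkeeping to the reader.
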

		
		\begin{proof}
			Let $(\lambda,x,y,z)\in\Delta(\bv,\epsilon)\cap \Omega$. Then
			$$\Big|x-\frac{p}{q}-z\Big(y-\frac{r}{q}\Big)\Big|<\frac{\epsilon}{q^{1+\lambda}}, \qquad  \Big|y-\frac{r}{q}\Big|<\frac{\epsilon}{q^{2-\lambda}}.$$
			Hence to show \eqref{e:inc}, it suffices to show that, for any $\lambda\in I_{\Omega}$, $q^{|\lambda-\lambda_{\Omega}|}\le 3$. According to Lemma \ref{L:Rvalue}, we have
			$$q^{|\lambda-\lambda_{\Omega}|}\le e^{\rho_{\Omega}\log q}\le e^{\rho_{\Omega}\log H_{2n+2}}\le 3.$$
			This completes the proof.
		\end{proof}
		
		\begin{lemma}\label{L:inequ}
			Let $n\ge 1$, $\Omega\in\sB'_n$, $k\in [1,n]$, and $\Omega_j\in\sB_{n+k}$ with $\Omega_j\subset \Omega$, $j=1,2$.
			Let $\bv_j=(p_j,r_j,q_j)\in\cV_{\Omega_j,k}$ be such that $\Delta(\bv_j,\epsilon)\cap \Omega\ne\emptyset$, and $\bw_j=\bw(\Omega_j,\bv_j)$. Then
			\begin{equation}\label{E:v1w2}
				\max\left\{|\bv_1\cdot\bw_2|, \  |\bv_2\cdot\bw_1|\right\} \le \begin{cases}
					R^{k+22}\epsilon, & k=1,\\
					R^{k+4}\epsilon, & k\ge2.
				\end{cases}
			\end{equation}
		\end{lemma}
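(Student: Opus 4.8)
The plan is to turn $\bv_i\cdot\bw_j$ into a linear functional in the coordinate differences $\tfrac{p_1}{q_1}-\tfrac{p_2}{q_2}$ and $\tfrac{r_1}{q_1}-\tfrac{r_2}{q_2}$, and then to split the resulting expression into a ``main part'' living at the scale $\rho_\Omega$ of the ball $\Omega$ and an ``error part'' living at the much finer scale $\epsilon$, estimating the two with the two different bounds available for the entries of $\bw_j$. By the symmetry $1\leftrightarrow2$ it suffices to bound $|\bv_1\cdot\bw_2|$; the bound for $|\bv_2\cdot\bw_1|$ is obtained by interchanging the indices. Write $\bw_2=(a_2,b_2,c_2)$ and $z_2:=z_{\Omega_2}$. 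The orthogonality $\bv_2\cdot\bw_2=a_2p_2+b_2r_2+c_2q_2=0$ (part of the definition of $\cW(\Omega_2,\bv_2)$) eliminates $c_2$, and after also splitting off the twisted entry $a_2z_2+b_2$ one gets
$$\bv_1\cdot\bw_2=q_1a_2\Big[\big(\tfrac{p_1}{q_1}-\tfrac{p_2}{q_2}\big)-z_2\big(\tfrac{r_1}{q_1}-\tfrac{r_2}{q_2}\big)\Big]+q_1(a_2z_2+b_2)\big(\tfrac{r_1}{q_1}-\tfrac{r_2}{q_2}\big),$$
so that the only entries of $\bw_2$ occurring are $a_2$ and $a_2z_2+b_2$. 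Next I would pick points $(\mu_j,x_j,y_j,z_j)\in\Delta(\bv_j,\epsilon)\cap\Omega$ (nonempty by hypothesis) and unwind the definition of $\Delta_{\mu_j}(\bv_j,\epsilon)$, writing $\tfrac{r_j}{q_j}=y_j-\eta_j$ and $\tfrac{p_j}{q_j}=x_j-z_j\eta_j-\delta_j$ with $|\eta_j|<\epsilon q_j^{-(2-\mu_j)}$ and $|\delta_j|<\epsilon q_j^{-(1+\mu_j)}$. Substituting and collecting terms, $\bv_1\cdot\bw_2$ becomes a sum of a main part built from $(x_1-x_2)-z_2(y_1-y_2)$ and $y_1-y_2$, and an error part built from the $\eta_j$'s and $\delta_j$'s (including the cross terms $(z_1-z_2)\eta_1$, $\eta_1-\eta_2$, $\delta_1-\delta_2$).

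For the main part I would use the sharp bound on $\bw_2$ coming from $\bv_2\in\cV_{\Omega_2}$. Since all of $(x_j,y_j,z_j)$ and the centres of $\Omega_1,\Omega_2$ lie in $\Omega$, whose radius satisfies $\rho_\Omega\le R^{-n}\rho_0$, one has $|x_1-x_2|,|y_1-y_2|,|z_1-z_2|\le2\rho_\Omega$ and $|\mu_j-\lambda_{\Omega_\ell}|\le2\rho_\Omega$, while \eqref{E:kappa} gives $|(x_1-x_2)-z_2(y_1-y_2)|\le2\kappa\rho_\Omega$. Estimating both $|a_2|$ and $|a_2z_2+b_2|$ by $H_{\Omega_2}(\bv_2)/q_2\le3H_{n+k+1}/q_2$ (the first step being the definition of $H_{\Omega_2}$, the second because $\bv_2\in\cV_{\Omega_2}$), the main part is $\lesssim\kappa\,\tfrac{q_1}{q_2}H_{n+k+1}\rho_\Omega$. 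Two numerical facts finish it: first, $q_1/q_2$ is at most $R^2$ (when $k\ge2$), resp.\ $R^{20}$ (when $k=1$), up to an absolute constant — this is where $\bv_1,\bv_2\in\cV_{\Omega_j,k}$ with the \emph{same} $k$ is used, together with $|\lambda_{\Omega_1}-\lambda_{\Omega_2}|\le2\rho_\Omega$ and Lemma \ref{L:Rvalue} (which makes $q_j^{|\lambda_{\Omega_1}-\lambda_{\Omega_2}|}\le 3$); second, $H_{n+k+1}\rho_\Omega\le20\epsilon\kappa\rho_0^{-1}R^{n+k+1}\cdot R^{-n}\rho_0=20\epsilon\kappa R^{k+1}$. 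Hence the main part is $\lesssim\kappa^2\epsilon R^{k+3}$ when $k\ge2$ and $\lesssim\kappa^2\epsilon R^{k+21}$ when $k=1$, and \eqref{E:R} ($R\ge10^4\kappa^4$) absorbs the factor $\kappa^2$ into one further power of $R$.

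For the error part the same $H$-bound would blow up as $n\to\infty$, so instead I would bound the entries of $\bw_2$ by their Minkowski bounds $|a_2|\le e^{\sqrt{\rho_{\Omega_2}}}q_2^{\lambda_{\Omega_2}}$ and $|a_2z_2+b_2|\le e^{\sqrt{\rho_{\Omega_2}}}q_2^{1-\lambda_{\Omega_2}}$, which hold since $\bw_2\in\cW(\Omega_2,\bv_2)$. Each error term is a product of $q_1$, one such entry, and one of $\eta_j,\delta_j$, hence carries a negative power $q_j^{-(2-\mu_j)}$ or $q_j^{-(1+\mu_j)}$; because $q_1$ and $q_2$ are comparable up to a fixed power of $R$ and the four weights $\mu_1,\mu_2,\lambda_{\Omega_1},\lambda_{\Omega_2}$ all lie within $2\rho_\Omega$ of one another (so every surviving power of $q$ cancels up to a bounded factor by Lemma \ref{L:Rvalue}), each such term is $\lesssim\epsilon R^3$ for $k\ge2$ (resp.\ $\lesssim\epsilon R^{21}$ for $k=1$). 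The few genuinely second-order terms (those containing $(z_1-z_2)\eta_1$ or $\eta_1-\eta_2$) acquire an extra factor $q_2^{\lambda_{\Omega_2}}\rho_\Omega$ or $H_{n+k+1}\rho_\Omega/q_2$, each $\lesssim\epsilon\kappa R^{k+1}$, so they contribute only $O(\kappa\epsilon^2R^{k+1})$, which is negligible by \eqref{E:epsilon} ($\epsilon\le R^{-24}\rho_0$). Adding the main and error parts and tidying the constants with \eqref{E:R} gives the asserted bound.

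The main obstacle, and the real substance of the proof, is exactly this bookkeeping: for each of the roughly half a dozen terms produced by the substitution one must choose correctly between the Minkowski bound and the bound $H_{\Omega_2}(\bv_2)/q_2$ for the relevant entry of $\bw_2$ — the former is the right one against the $\eta,\delta$ factors, the latter against the $\rho_\Omega$ factors — and then check that, after the cancellations forced by the comparability of $q_1$ and $q_2$ and by the near-equality of all four weights, every constant is controlled by the prescribed power of $R$. No single estimate is deep, but the argument uses all of \eqref{E:kappa}, \eqref{E:R}, \eqref{E:epsilon} and the two-scale ranges built into the definition of $\cV_{\Omega_j,k}$, and it genuinely splits into the cases $k=1$ and $k\ge2$ precisely because those ranges differ by $R^{20}$ versus $R^2$.
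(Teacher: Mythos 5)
Your proposal follows essentially the same route as the paper's proof: you use the orthogonality $\bv_2\cdot\bw_2=0$ to turn $\bv_1\cdot\bw_2$ into $q_1\big(a_2[\,\cdot\,]+(b_2+z_{\Omega_2}a_2)[\,\cdot\,]\big)$ with $[\,\cdot\,]$ built from the coordinate differences, and then split that into a scale-$\rho_\Omega$ part controlled by $H_{\Omega_2}(\bv_2)/q_2\le 3H_{n+k+1}/q_2$ and a scale-$\epsilon$ part controlled by the Minkowski bounds $|a_2|\lesssim q_2^{\lambda}$, $|b_2+z_{\Omega_2}a_2|\lesssim q_2^{1-\lambda}$, invoking the comparability of $q_1,q_2$ (forced by being in the same $\cV_{\cdot,k}$) and the near-constancy of the weight to close. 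The paper reaches the identical two-scale structure, but executes the algebra slightly differently: rather than substituting $\tfrac{r_j}{q_j}=y_j-\eta_j$, $\tfrac{p_j}{q_j}=x_j-z_j\eta_j-\delta_j$ and then tracking cross-terms, it bounds $\big|\tfrac{p_1}{q_1}-\tfrac{p_2}{q_2}-z_{\Omega_2}(\tfrac{r_1}{q_1}-\tfrac{r_2}{q_2})\big|$ and $\big|\tfrac{r_1}{q_1}-\tfrac{r_2}{q_2}\big|$ directly by a telescoping identity (using $|r_j/q_j|\le\kappa$ to absorb the $z$-shift terms) after first applying Lemma~\ref{L:inc} to renormalize every weight to $\lambda_\Omega$ at the cost of replacing $\epsilon$ by $3\epsilon$; this keeps the $\rho_\Omega$-term as a clean $10\kappa\rho_\Omega$ and avoids any genuinely second-order bookkeeping. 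Your version is a touch heavier (you must argue the $(z_j-z_{\Omega_2})\eta_j$-type terms are $O(\epsilon^2)$-negligible and handle four nearby weights $\mu_1,\mu_2,\lambda_{\Omega_1},\lambda_{\Omega_2}$ instead of one), but both paths land on the same bound $C\kappa^2 R^{d_k+k+1}\epsilon + C'R^{d_k}\epsilon\le R^{d_k+k+2}\epsilon$, with $d_k=20$ for $k=1$ and $d_k=2$ otherwise, which is exactly \eqref{E:v1w2}.
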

		
		\begin{proof}
			For simplicity, we write $\rho_{\Omega}=\rho$ and $\lambda_{\Omega}=\lambda$.   We first give a bound on $q_1$ and $q_2$ in terms of $\lambda$. Indeed, for any $\lambda'\in I_{\Omega}$, we have
			\begin{equation}\label{E:bound-lambda}
				H_{n+k}^{\big|\frac{1}{1+\lambda'}-\frac{1}{1+\lambda}\big|}\le H_{2n}^{|\lambda'-\lambda|} \le e^{\rho\log H_{2n}}\le 3.
			\end{equation}
			By applying \eqref{E:bound-lambda} to $\lambda_{\Omega_1}$ and $\lambda_{\Omega_2}$, we get
			\begin{equation}\label{E:bound-q-1-2}
				\begin{aligned}
					q_1, q_2\in &  \left[\frac{1}{3}H_{n+k}^{\frac{1}{1+\lambda}},\  3H_{n+k}^{\frac{1}{1+\lambda}}R^{20}\right], & k=1, \\
					q_1, q_2\in & \left[\frac{1}{3}H_{n+k}^{\frac{1}{1+\lambda}}R^{2k+16}, \ 3H_{n+k}^{\frac{1}{1+\lambda}}R^{2k+18}\right], & k\ge 2.
				\end{aligned}
			\end{equation}
			Let $(\lambda_j',x_j,y_j,z_j)\in\Delta(\bv_j,\epsilon)\cap \Omega$.
			Since $\Omega_j\in\sB_{n+k}$ and $\bv_j\in \cV_{\Omega_j}$, we have
			$$q_j\le H_{\Omega_j}(\bv_j)\le 3H_{n+k+1} \le 3H_{2n+1}\le H_{2n+2}.$$
			Thus, it follows from Lemma \ref{L:inc} that, $(x_j,y_j,z_j)\in\Delta_{\lambda}(\bv_j,3\epsilon)$. Hence
			$$\Big|x_j-\frac{p_j}{q_j}-z_j\Big(y_j-\frac{r_j}{q_j}\Big)\Big|<\frac{3\epsilon}{q_j^{1+\lambda}}, \qquad  \Big|y_j-\frac{r_j}{q_j}\Big|<\frac{3\epsilon}{q_j^{2-\lambda}}.$$
			The latter inequality implies that
			$$\Big|\frac{r_j}{q_j}\Big|\le|y_j|+\frac{3\epsilon}{q_j^{2-\lambda}}\le \kappa.$$
			Thus
			\begin{align*}
				&\ \Big|\frac{p_1}{q_1}-\frac{p_2}{q_2}-z_{\Omega_2}\Big(\frac{r_1}{q_1}-\frac{r_2}{q_2}\Big)\Big|\\
				=&\ \Big|-\Big(x_1-\frac{p_1}{q_1}-z_1\Big(y_1-\frac{r_1}{q_1}\Big)\Big)+\Big(x_2-\frac{p_2}{q_2}-z_2\Big(y_2-\frac{r_2}{q_2}\Big)\Big)\\
				& \qquad +(x_1-x_2)+\frac{r_1}{q_1}(z_1-z_{\Omega_2})+\frac{r_2}{q_2}(z_{\Omega_2}-z_2)-(y_1z_1-y_2z_2)\Big|\\
				\le&\ \Big|x_1-\frac{p_1}{q_1}-z_1\Big(y_1-\frac{r_1}{q_1}\Big)\Big|+\Big|x_2-\frac{p_2}{q_2}-z_2\Big(y_2-\frac{r_2}{q_2}\Big)\Big|\\
				& \qquad +|x_1-x_2|+\Big|\frac{r_1}{q_1}\Big||z_1-z_{\Omega_2}|+\Big|\frac{r_2}{q_2}\Big||z_{\Omega_2}-z_2|+|y_1||z_1-z_2|+|z_2||y_1-y_2|\\
				\le&\ \frac{3\epsilon}{q_1^{1+\lambda}}+\frac{3\epsilon}{q_2^{1+\lambda}}+10\kappa \rho,
			\end{align*}
			and
			$$
			\Big|\frac{r_1}{q_1}-\frac{r_2}{q_2}\Big|=\Big|-\Big(y_1-\frac{r_1}{q_1}\Big)+\Big(y_2-\frac{r_2}{q_2}\Big)+(y_1-y_2)\Big|
			\le\frac{3\epsilon}{q_1^{2-\lambda}}+\frac{3\epsilon}{q_2^{2-\lambda}}+2\rho.
			$$
			
			Write $\bw_j=(a_j,b_j,c_j)$. Then according to Lemma \ref{L:Rvalue}, we have
			$$|a_j|\le e^{\sqrt{\rho_{\Omega_j}}}q_j^{\lambda_{\Omega_j}}\le 9q_j^\lambda, \quad \text{and} \quad |b_j+z_{\Omega_j}a_j|\le e^{\sqrt{\rho_{\Omega_j}}}q_j^{1-\lambda_{\Omega_j}}\le 9q_j^{1-\lambda}.$$
			Set $d_k$ to be a function such that $d_k=20$ if $k=1$, and $d_k=2$ otherwise.
			It follows that
			\begin{align*}
				|\bv_1\cdot\bw_2|=&\ q_1|(q_1^{-1}\bv_1-q_2^{-1}\bv_2)\cdot\bw_2|\\
				=&\ q_1\Big|a_2\Big(\frac{p_1}{q_1}-\frac{p_2}{q_2}\Big)+b_2\Big(\frac{r_1}{q_1}-\frac{r_2}{q_2}\Big)\Big| \\
				=&\ q_1\Big|a_2\Big(\frac{p_1}{q_1}-\frac{p_2}{q_2}-z_{\Omega_2}\Big(\frac{r_1}{q_1}-\frac{r_2}{q_2}\Big)\Big)
				+(b_2+z_{\Omega_2}a_2)\Big(\frac{r_1}{q_1}-\frac{r_2}{q_2}\Big)\Big| \\
				\le&\ q_1|a_2|\Bigg(\frac{3\epsilon}{q_1^{1+\lambda}}+\frac{3\epsilon}{q_2^{1+\lambda}}+10\kappa \rho\Bigg)+q_1|b_2+z_{\Omega_2}a_2|\Bigg(\frac{3\epsilon}{q_1^{2-\lambda}}+\frac{3\epsilon}{q_2^{2-\lambda}}+2\rho\Bigg) \\
				\le&\ 9q_1q_2^{\lambda}\Bigg(\frac{3\epsilon}{q_1^{1+\lambda}}+\frac{3\epsilon}{q_2^{1+\lambda}}\Bigg)
				+9q_1q_2^{1-\lambda}\Bigg(\frac{3\epsilon}{q_1^{2-\lambda}}+\frac{3\epsilon}{q_2^{2-\lambda}}\Bigg) \\
				&\ +12\kappa \rho q_1\max\big\{|a_2|,|b_2+z_{\Omega_2}a_2|\big\}\\
				\le&\  27\epsilon \Bigg(\frac{q_2^{\lambda}}{q_1^{\lambda}}+\frac{q_1}{q_2}+\frac{q_2^{1-\lambda}}{q_1^{1-\lambda}}
				+\frac{q_1}{q_2}\Bigg)+12\kappa R^{-n}\rho_0H_{\Omega_2}(\bv_2)\frac{q_1}{q_2} \\
				\le &\ 108R^{d_k}\epsilon+720\kappa^2R^{d_k+k+1}\epsilon \le R^{d_k+k+2}\epsilon.
			\end{align*}
			This proves \eqref{E:v1w2} for $|\bv_1\cdot\bw_2|$. A similar argument shows the same inequality for $|\bv_2\cdot\bw_1|$.
		\end{proof}
		
		For a closed ball $\Omega\subset\Sigma$ and $\bv=(p,r,q)\in\Xi$, we consider the plane
		\begin{equation}\label{E:plane}
        L(\Omega,\bv)=\{(\lambda,x,y,z)\in\Sigma:a(\Omega,\bv)x+b(\Omega,\bv)y+c(\Omega,\bv)=0\}.
        \end{equation}
		The next lemma states that many pairs $(\Omega,\bv)$ share the same hyperplane $L(\Omega,\bv)$.
		
		\begin{lemma}\label{L:const}
			Let $n\ge1$, $\Omega\in\sB'_n$, $k\in [1,n]$. Then either 
            \begin{itemize}
                \item[(i)] there is a plane $L_k(\Omega)$ such that for any $\Omega'\in\sB_{n+k}$ with $\Omega'\subset \Omega$, if $\bv\in\cV_{\Omega',k}$ and $\Delta(\bv,\epsilon)\cap \Omega\ne\emptyset$, then $L(\Omega',\bv)=L_k(\Omega)$, or
                \item[(ii)] $k=1$, and there is $\bv_0=(p_0,r_0,q_0)\in \Xi$ with $\frac{1}{3}H_{n+1}^{\frac{1}{1+\lambda_{\Omega}}}\leq q_0\leq 3H_{n+2}$ such that for any $\Omega'\in \sB_{n+1}$ with $\Omega'\subset \Omega$, if $\bv\in\cV_{\Omega',1}$ and $\Delta(\bv,\epsilon)\cap \Omega\neq \varnothing$, then $\bv=\bv_0$.
            \end{itemize}
		\end{lemma}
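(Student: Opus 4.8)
The plan is to reduce the statement to an assertion about pairs and then exploit the integrality of the lattice vectors attached to the balls. First I would set terminology: call a pair $(\Omega',\bv)$ \emph{admissible} if $\Omega'\in\sB_{n+k}$, $\Omega'\subset\Omega$, $\bv\in\cV_{\Omega',k}$ and $\Delta(\bv,\epsilon)\cap\Omega\ne\emptyset$. If there is no admissible pair, alternative (i) holds with $L_k(\Omega)$ chosen arbitrarily. Otherwise I claim it is enough to prove the pairwise statement: for any two admissible pairs $(\Omega_1,\bv_1)$ and $(\Omega_2,\bv_2)$, either $L(\Omega_1,\bv_1)=L(\Omega_2,\bv_2)$, or else $k=1$ and $\bv_1=\bv_2$. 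Granting this, if all admissible pairs determine the same plane we are in case (i); if two admissible pairs $(\Omega_1,\bv_1),(\Omega_2,\bv_2)$ determine distinct planes, then $k=1$ and $\bv_1=\bv_2=:\bv_0$, and for any further admissible $(\Omega_3,\bv_3)$ the pairwise statement applied to $(\Omega_1,\bv_0)$ against $(\Omega_3,\bv_3)$ and to $(\Omega_2,\bv_0)$ against $(\Omega_3,\bv_3)$ forces $\bv_3=\bv_0$ (since $\bv_3\ne\bv_0$ would make both planes equal to $L(\Omega_3,\bv_3)$, contradicting $L(\Omega_1,\bv_0)\ne L(\Omega_2,\bv_0)$), so we land in case (ii); the bounds on $q_0$ are then read off from $\bv_0\in\cV_{\Omega',1}$ via \eqref{E:qq}, \eqref{E:q} and \eqref{E:bound-lambda}.

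To prove the pairwise statement I would write $\bv_j=(p_j,r_j,q_j)$ and $\bw_j:=\bw(\Omega_j,\bv_j)=(a_j,b_j,c_j)\in\ZZ^3$. By the defining linear equation in $\cW(\Omega_j,\bv_j)$ we have $\bv_j\cdot\bw_j=0$, and by \eqref{E:plane} the planes $L(\Omega_1,\bv_1)$ and $L(\Omega_2,\bv_2)$ coincide precisely when $\bw_1$ and $\bw_2$ are proportional, i.e. $\bw_1\times\bw_2=0$. Assume they are not proportional. Since both pairs are admissible, Lemma \ref{L:inequ} applies and gives $|\bv_1\cdot\bw_2|,\ |\bv_2\cdot\bw_1|\le R^{k+22}\epsilon$; using \eqref{E:epsilon} together with \eqref{E:R} one checks that, in the range of $(n,k)$ that can actually occur here, this quantity is strictly less than $1$, so — both pairings being integers — $\bv_1\cdot\bw_2=\bv_2\cdot\bw_1=0$. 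Hence $\bw_1$ and $\bw_2$ are each orthogonal to $\mathrm{span}_{\RR}(\bv_1,\bv_2)$. If $\bv_1,\bv_2$ are linearly independent, this span is a plane whose orthogonal complement is a line, forcing $\bw_1\parallel\bw_2$, a contradiction; so $\bv_1\parallel\bv_2$, and since the $\bv_j$ lie in $\Xi$ (primitive, positive last coordinate) this gives $\bv_1=\bv_2$. Finally I would show this case can only occur when $k=1$: for $k\ge2$ the scale bounds \eqref{E:bound-q-1-2} on $q_1=q_2$, combined with \eqref{E:qq} and the minimality property \eqref{E:max}, pin $\bw(\Omega_1,\bv_1)$ and $\bw(\Omega_2,\bv_2)$ down up to proportionality, so the two planes agree after all.

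The conceptual core is short: the cross pairings $\bv_1\cdot\bw_2$ and $\bv_2\cdot\bw_1$ are integers which the estimate of Lemma \ref{L:inequ} forces to vanish, after which a one-line linear-algebra argument identifies the planes. The step I expect to be the main obstacle is the arithmetic bookkeeping in the middle paragraph — verifying, with only $\epsilon\le R^{-24}\rho_0$ and the largeness of $R$ available, that the Lemma \ref{L:inequ} bound is genuinely $<1$ across all relevant scales (for large $k$ this will likely require re-running the inductive comparison of $H_{\Omega^{(m)}}(\bv)$ along the chain $\Omega=\Omega^{(n)}\subset\cdots\subset\Omega_0$ as in the proof of Lemma \ref{L:main1}), and carrying out the $k\ge2$ comparison of $\bw(\Omega_1,\bv)$ with $\bw(\Omega_2,\bv)$, which is precisely where the $k=1$ versus $k\ge2$ asymmetry in the statement comes from.
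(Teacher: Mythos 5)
Your reduction to the pairwise statement and the ensuing dichotomy analysis is a clean reorganization, and your argument for the $k=1$ case (cross pairings are integers of absolute value $<1$, hence $0$; linearly independent $\bv_1,\bv_2$ then force $\bw_1\parallel\bw_2$; linearly dependent ones force $\bv_1=\bv_2$, giving alternative (ii)) is essentially the paper's Case (1). But the $k\ge2$ half has a genuine gap, and it is exactly the spot you flag as the ``main obstacle.'' First, Lemma \ref{L:inequ} only gives $|\bv_1\cdot\bw_2|,|\bv_2\cdot\bw_1|\le R^{k+4}\epsilon$, and with $\epsilon\le R^{-24}\rho_0$ this is bounded by roughly $R^{k-20}\rho_0$, which is not $<1$ once $k$ is large (and $k$ ranges up to $n$, which is unbounded). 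So you cannot conclude $\bv_1\cdot\bw_2=\bv_2\cdot\bw_1=0$ in general, and ``re-running the inductive comparison of Lemma \ref{L:main1}'' does not obviously salvage this. Second, even when those pairings do vanish, the degenerate case $\bv_1=\bv_2=:\bv$ only tells you that $\bw_1,\bw_2$ both lie in the two-dimensional space $\bv^\perp$, which does not force proportionality; and since $\bw(\Omega_1,\bv)$ and $\bw(\Omega_2,\bv)$ minimize \emph{different} weighted sup-norms (involving $z_{\Omega_1}$ vs.\ $z_{\Omega_2}$), minimality \eqref{E:max} does not pin them down up to a scalar.

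The paper's actual argument for $k\ge2$ avoids both problems by working with $\bv_0:=\bw_1\times\bw_2=(p_0,r_0,q_0)$ rather than with the pairings directly. It uses the triple product identity $\bv_1\times\bv_0=(\bv_1\cdot\bw_2)\bw_1$ together with the much stronger decay coming from the factor $q_j\ge\tfrac13 H_{n+k}^{1/(1+\lambda)}R^{2k+16}$, which for $k\ge2$ makes the components of $\bw_j$ tiny. If $q_0=0$, the integer $\max\{|r_0|,|p_0-z_{\Omega_1}r_0|\}$ is bounded by roughly $H_{n+k}^{(\lambda-1)/(1+\lambda)}R^{-3k-26}\epsilon<1$ (note the gain of $R^{-3k}$, which makes the estimate uniform in $k$), so $(p_0,r_0)=(0,0)$ and $\bv_0=0$. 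If $q_0\ne0$, one shows $\Delta(\bv_1,\epsilon)\cap\Omega\subset\Delta(\bv_0,\epsilon)$ and hence $\Delta(\bv_0,\epsilon)\cap\Omega\ne\emptyset$, so Lemma \ref{L:main1} gives $q_0^{1+\lambda}>3H_{n+1}$, contradicting $q_0\le 2H_{n+k}^{1/(1+\lambda)}R^{-k-4}$, i.e.\ $q_0^{1+\lambda}\le 4R^{-4}H_{n+1}$. This is where the $k=1$/$k\ge2$ asymmetry truly lives: for $k=1$ the vectors $\bw_j$ are not small enough to run the cross-product estimate, so one instead uses the dimension of the span of the admissible $\bv$'s, accepting the exceptional alternative (ii); for $k\ge2$ the extra $R^{2k+16}$ in $q_j$ buys enough smallness that (i) always holds. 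Your proposal inverts this dichotomy and does not supply the crucial $q_0\ne0$ step.
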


		\begin{proof}
			
            We proceed by considering two separate cases.
			
			\medskip
			
			\textbf{Case (1).} Suppose $k=1$. We consider two subcases.

            \textbf{Case (1.1).} The linear span of the set $$
            \cV:=\bigcup_{\substack{\Omega'\in \sB_{n+1}\\ \Omega'\subset\Omega}}\{\bv\in \cV_{\Omega',1}:\Delta(\bv,\epsilon)\cap \Omega\neq \varnothing\}
            $$
            is of dimension $\geq 2$. We prove that (i) holds. It suffices to prove that for $j=1,2$, if $\Omega_j\in\sB_{n+1}$, $\Omega_j\subset \Omega$, $\bv_j\in\cV_{\Omega_j,1}$, $\Delta(\bv_j, \epsilon)\cap \Omega\ne\emptyset$, and $\bv_1$ and $\bv_2$ are linearly independent, then $L(\Omega_1,\bv_1)=L(\Omega_2,\bv_2)$. Write $\bw_j=\bw(\Omega_j,\bv_j)$.
            It follows from Lemma \ref{L:inequ} that
			$$\max\{|\bv_1\cdot\bw_2|, |\bv_2\cdot\bw_1|\}\le R^{23}\epsilon<1.$$
			Since both $\bv_2\cdot\bw_1$ and $\bv_1\cdot\bw_2$ are integers, they must be $0$. 
            Hence both $\bw_1$ and $\bw_2$ are orthogonal to $\mathrm{span}\{\bv_1,\bv_2\}$.
            Thus $\bw_1$ and $\bw_2$ are linearly dependent. This means that $L(\Omega_1,\bv_1)=L(\Omega_2,\bv_2)$.

            \textbf{Case (1.2).} The linear span of $\cV$ is of dimension $1$. We prove that (ii) holds. Let $\bv_0=(p_0,r_0,q_0)$ be any element in $\cV$. It follows from $\cV\subset \Xi$ that $\cV=\{\bv_0\}$. Moreover, since $\bv_0\in \cV_{\Omega',1}$ for some $\Omega'\in\sB_{n+1}$ with $\Omega'\subset \Omega$, we have $\frac{1}{3}H_{n+1}^{\frac{1}{1+\lambda_{\Omega}}}\leq q_0\leq 3H_{n+2}$. This completes the proof of Case (1).

			\medskip
			
			\textbf{Case (2).} Suppose $k\ge2$. 
            We prove that (i) holds. It suffices to prove that if $\Omega_j\in\sB_{n+k}$, $\Omega_j\subset \Omega$, $\bv_j\in\cV_{\Omega_j,k}$, $\Delta(\bv_j, \epsilon)\cap \Omega\ne\emptyset$, and $\bw_j=\bw(\Omega_j,\bv_j)$  ($j=1,2$), then $\bw_1$ and $\bw_2$ are linearly dependent. 
            Let $\bv_j=(p_j,r_j,q_j)$, $\bw_j=(a_j,b_j,c_j)$. Then
			\begin{align*}
				\max\{|a_j|,|b_j+z_{\Omega_j}a_j|\}&=q_j^{-1}H_{\Omega_j}(\bv_j)\le\left(\frac{1}{3}H_{n+k}^{\frac{1}{1+\lambda}}R^{2k+16}\right)^{-1}\cdot 3H_{n+k+1}\\
				&=9H_{n+k}^{\frac{\lambda}{1+\lambda}}R^{-2k-15}\le H_{n+k}^{\frac{\lambda}{1+\lambda}}R^{-2k-14}. \\
			\end{align*}
			Moreover, for any $(x,y,z)\in B_\Omega$, we have
			\begin{align*}
				|b_j+za_j|\le&\ |b_j+z_{\Omega_j}a_j|+|a_j||z-z_{\Omega_j}|
				\le 3q_j^{1-\lambda_j}+2\rho|a_j|\\
				\le&\ 9q_j^{1-\lambda}+H_{n+k}^{\frac{\lambda}{1+\lambda}}R^{-2k-14}\cdot 2R^{-n}\rho_0 \le 9q_j^{1-\lambda}+2H_{n+k}R^{-n-2k-14}\rho_0 \\
				\le&\  9q_j^{1-\lambda}+1\le 10q_j^{1-\lambda}\le 10\left(3H_{n+k}^{\frac{1}{1+\lambda}}R^{2k+18}\right)^{1-\lambda}\\
				\le&\  30H_{n+k}^{\frac{1-\lambda}{1+\lambda}}R^{k+9} \le H_{n+k}^{\frac{1-\lambda}{1+\lambda}}R^{k+10}.
			\end{align*}
			
			Let $$\bv_0=(p_0,r_0,q_0)=\bw_1\times \bw_2.$$
			It suffices to prove that $\bv_0=0$. By the triple cross product expansion, we have
			$$\bv_1\times \bv_0=\bv_1\times(\bw_1\times \bw_2)=(\bv_1\cdot\bw_2)\bw_1.$$
			Comparing the first two components of the vectors on both sides, we obtain
			\begin{align}
				q_0\frac{r_1}{q_1}-r_0&=q_1^{-1}(\bv_1\cdot\bw_2)a_1, \label{E:c1}\\
				q_0\frac{p_1}{q_1}-p_0&=-q_1^{-1}(\bv_1\cdot\bw_2)b_1. \label{E:c2}
			\end{align}
			Suppose to the contrary that $\bv_0\ne0$. There are two cases.
			
			\medskip
			
			\textbf{Case (2.1).} Suppose $q_0=0$. Then $(p_0,r_0)\ne(0,0)$. It then follows from \eqref{E:c1} and \eqref{E:c2} that
			\begin{align*}
				1&\le\max\{|r_0|,|p_0-z_{\Omega_1}r_0|\}\\
				&=q_1^{-1}|\bv_1\cdot\bw_2|\max\{|a_1|,|b_1+z_{\Omega_1}a_1|\}\\
				&\le \left(\frac{1}{3}H_{n+k}^{\frac{1}{1+\lambda}}R^{2k+16}\right)^{-1}\cdot  R^{k+4}\epsilon \cdot H_{n+k}^{\frac{\lambda}{1+\lambda}}R^{-2k-14}\\
				&\le 3 H_{n+k}^{\frac{\lambda-1}{1+\lambda}}R^{-3k-26}\epsilon<1,
			\end{align*}
			a contradiction.
			
			\medskip
			
			\textbf{Case (2.2).} Suppose $q_0\ne0$. Without loss of generality, we may assume that $q_0>0$. Then
			\begin{align}
				q_0&=|a_1b_2-a_2b_1|=|a_1(b_2+za_2)-a_2(b_1+za_1)|\notag\\
				&\le 2\cdot H_{n+k}^{\frac{\lambda}{1+\lambda}}R^{-2k-14}\cdot H_{n+k}^{\frac{1-\lambda}{1+\lambda}}R^{k+10}=
				2H_{n+k}^{\frac{1}{1+\lambda}}R^{-k-4}.\label{E:q_0}
			\end{align}
			It follows that
			$$q_0/q_1\le2H_{n+k}^{\frac{1}{1+\lambda}}R^{-k-4}\cdot 3H_{n+k}^{-\frac{1}{1+\lambda}}R^{-2k-16}=6R^{-3k-20}\le 1/6.$$
			We prove that $\Delta(\bv_1,\epsilon)\cap \Omega\subset\Delta(\bv_0,\epsilon)$. In view of \eqref{E:q_0} and Lemma \ref{L:inc}, it suffices to show that
			\begin{equation*}
				\Delta_{\lambda}(\bv_1,3\epsilon)\cap B_\Omega\subset \Delta_{\lambda}(\bv_0,\epsilon/3).
			\end{equation*}
			Let $( x,y,z)\in\Delta_{\lambda}(\bv_1,3\epsilon)\cap B_\Omega$.
			It follows from \eqref{E:c1} and \eqref{E:c2}  that
			\begin{align*}
				&\ q_0^{1+\lambda}\Big|x-\frac{p_0}{q_0}-z\Big(y-\frac{r_0}{q_0}\Big)\Big|\\
				\le &\ q_0^{1+\lambda}\Big|x-\frac{p_1}{q_1}-z\Big(y-\frac{r_1}{q_1}\Big)\Big|
				+q_0^{1+\lambda}\Big|\frac{p_1}{q_1}-\frac{p_0}{q_0}-z\Big(\frac{r_1}{q_1}-\frac{r_0}{q_0}\Big)\Big|\\
				< &\ q_0^{1+\lambda}\frac{\epsilon}{q_1^{1+\lambda}}+q_0^{\lambda} q_1^{-1}|b_1+za_1||\bv_1\cdot\bw_2|\\
				\le &\ \epsilon/6+2H_{n+k}^{\frac{\lambda}{1+\lambda}}\cdot 3H_{n+k}^{-\frac{1}{1+\lambda}}R^{-2k-16}\cdot H_{n+k}^{\frac{1-\lambda}{1+\lambda}}R^{k+10}\cdot R^{k+4}\epsilon\\
				\le &\ (1/6+6R^{-2})\epsilon<\epsilon/3,
			\end{align*}
			and
			\begin{align*}
				q_0^{2-\lambda}\Big|y-\frac{r_0}{q_0}\Big|
				\le &\ q_0^{2-\lambda}\Big|y-\frac{r_1}{q_1}\Big|+q_0^{2-\lambda}\Big|\frac{r_1}{q_1}-\frac{r_0}{q_0}\Big|\\
				< &\ q_0^{2-\lambda}\frac{\epsilon}{q_1^{2-\lambda}}+q_0^{1-\lambda} q_1^{-1}|a_1||\bv_1\cdot\bw_2|\\
				\le &\ \epsilon/6+2H_{n+k}^{\frac{1-\lambda}{1+\lambda}}\cdot 3H_{n+k}^{-\frac{1}{1+\lambda}}R^{-2k-16}\cdot H_{n+k}^{\frac{\lambda}{1+\lambda}}R^{-2k-14}\cdot R^{k+4}\epsilon\\
				\le &\ (1/6+ 6R^{-26})\epsilon<\epsilon/3.
			\end{align*}
			Thus $(x,y,z)\in\Delta_\lambda(\bv_0,\epsilon/3)$. This proves $\Delta(\bv_1,\epsilon)\cap \Omega\subset\Delta(\bv_0,\epsilon)$.
			It then follows from $\Delta(\bv_1,\epsilon)\cap \Omega\ne\emptyset$ that $\Delta(\bv_0,\epsilon)\cap \Omega\ne\emptyset$. By Lemma \ref{L:main1}, we have
			$q_0^{1+\lambda}>3H_{n+1}$. But in view of \eqref{E:q_0}, we have
			$$q_0^{1+\lambda}\le 2^{1+\lambda}H_{n+k}R^{-(1+\lambda)(k+4)}\le 4R^{-4}H_{n+1}\le 3H_{n+1},$$
			which leads to a contradiction. This completes the proof of the lemma.
		\end{proof}
		
		We are now prepared to prove Lemma \ref{L:main2}.
		
		\begin{proof}[Proof of Lemma \ref{L:main2}]
			Let $n\ge1$, $\Omega\in\sB'_n$, $k\ge 1$. We need to prove that there is a plane $L_k(\Omega)$ such that for any $\Omega'\in\sB_{n+k}$ with $\Omega'\subset \Omega$ and any $\bv\in\cV_{\Omega',k}$, we have $\Delta(\bv,\epsilon)\cap \Omega'\subset L_k(\Omega)^{(R^{-(n+k)}\rho_0)}$. We need only to consider the case that $\Delta(\bv,\epsilon)\cap \Omega'\ne\emptyset$.  
            By Lemma \ref{L:const}, one of the following statements holds:
            \begin{itemize}
                \item[(i)] there is a plane $L_k(\Omega)$ such that for any $\Omega'\in\sB_{n+k}$ with $\Omega'\subset \Omega$, if $\bv\in\cV_{\Omega',k}$ and $\Delta(\bv,\epsilon)\cap \Omega\ne\emptyset$, then $L(\Omega',\bv)=L_k(\Omega)$.
                \item[(ii)] $k=1$, and there is $\bv_0=(p_0,r_0,q_0)\in \Xi$ with $\frac{1}{3}H_{n+1}^{\frac{1}{1+\lambda_{\Omega}}}\leq q_0\leq 3H_{n+2}$ such that for any $\Omega'\in \sB_{n+1}$ with $\Omega'\subset \Omega$, if $\bv\in\cV_{\Omega',1}$ and $\Delta(\bv,\epsilon)\cap \Omega\neq \varnothing$, then $\bv=\bv_0$.
            \end{itemize}
            
            Suppose that (i) holds.
            Let us fix such a $\Omega'$ and denote $\lambda_{\Omega'}=\lambda',\;\rho_{\Omega'}=\rho'$.
            By Lemma \ref{L:inc}, it suffices to show that
			\begin{equation}\label{E:final}
				\text{$\left(I_{\Omega'}\times\Delta_{\lambda'}(\bv, 3\epsilon)\right)\cap \Omega'\subset L(\Omega',\bv)^{(R^{-(n+k)}\rho_0)}$ for any $\bv=(p,r,q)\in\cV_{\Omega'}$.}
			\end{equation}
			
			Firstly, we have 
			$$\rho q\le R^{-(n+k)}\rho_0\cdot3H_{n+k+1}=60\epsilon \kappa R\le 1/4.$$
			Denote $\bw(\Omega',\bv)=(a,b,c)$. If $(x,y,z)\in\Delta_{\lambda'}(\bv, 3\epsilon)\cap B_{\Omega'}$, then
			\begin{align*}
            |ax+by+c|&=\Big|a\Big(x-\frac{p}{q}-z\Big(y-\frac{r}{q}\Big)\Big)+(b+za)\Big(y-\frac{r}{q}\Big)\Big|\\
				&< |a|\frac{3\epsilon}{q^{1+\lambda'}}+|b+za|\frac{3\epsilon}{q^{2-\lambda'}}\\
				&\le 3\epsilon q^{-1}(|a|q^{-\lambda'}+|b+z_{\Omega'} a|q^{-1+\lambda'}+|z-z_{\Omega'}||a|q^{-1+\lambda'})\\
				&\le 3\epsilon q^{-1}\Big(3+3+\rho' q^{-1+2\lambda'}\Big)\\
				&\le 3\epsilon q^{-1}\Big(6+\rho' q\Big)\\
				&\le 20\epsilon q^{-1}.
			\end{align*}
			Note that
			$$H_{\Omega'}(\bv)=q\max\{|a|,|b+z_{\Omega'}a|\}\le\kappa q\max\{|a|,|b|\}.$$
			Thus
			$$\frac{|ax+by+c|}{\sqrt{a^2+b^2}}<\frac{20\epsilon}{q\max\{|a|,|b|\}}\le\frac{20\epsilon \kappa }{H_{\Omega'}(\bv)}\le\frac{20\epsilon \kappa }{H_{n+k}}=R^{-(n+k)}\rho_0.$$
			This means that $I_{\Omega'}\times\{(x,y,z)\}\subset L(\Omega',\bv)^{(R^{-(n+k)}\rho_0)}$. This proves \eqref{E:final}.

            Suppose that (ii) holds. Let us write $\lambda_{\Omega}=\lambda,\;\rho_{\Omega}=\rho$. Consider the plane $$
            L_1(\Omega)=\left\{(\lambda,x,y,z)\in \Sigma: x-\frac{p_0}{q_0}-z_{\Omega}\left(y-\frac{r_0}{q_0}\right)=0\right\}.
            $$
            We proceed by showing that $\Delta(\bv_0,\epsilon)\cap \Omega\subset L_1(\Omega)^{(R^{-n+1}\rho_0)}$. By Lemma \ref{L:inc}, it suffices to show that 
            \begin{equation}\label{E:final2}
                (I_{\Omega}\times \Delta_{\lambda}(\bv_0,3\epsilon))\subset L_1(\Omega)^{(R^{-(n+1)\rho_0})}.
            \end{equation}  
            If $(x,y,z)\in \Delta_{\lambda}(\bv_0,3\epsilon)\cap B_{\Omega}$, then 
\begin{align*}
				(1+z_{\Omega}^2)^{-1/2}\left|x-\frac{p_0}{q_0}-z_{\Omega}\left(y-\frac{r_0}{q_0}\right)\right|
                &\leq\left|x-\frac{p_0}{q_0}-z\left(y-\frac{r_0}{q_0}\right)\right|+|z-z_{\Omega}|\left|y-\frac{r_0}{q_0}\right|\\
				&< \frac{3\epsilon}{q_0^{1+\lambda}}+\rho\frac{3\epsilon}{q_0^{2-\lambda}}\\
				&\le 3\epsilon q_0^{-(1+\lambda)}(1+\rho q_0)\\
				&\le 27\epsilon H_{n+1}^{-1}(1+R^{-n}\rho_0\cdot 3H_{n+2})\\
                &=27(20\kappa)^{-1}R^{-(n+1)}\rho_0(1+60\epsilon\kappa R^2)\\
				&< R^{-(n+1)}\rho_0.
			\end{align*}
            This means that $I_{\Omega}\times \{(x,y,z)\}\subset L_1(\Omega)^{(R^{-(n+1)\rho_0})}.$ This proves (\ref{E:final2}).
		\end{proof}

		\section*{Acknowledgment}
		Both authors would like to thank Jinpeng An and Dmitry Kleinbock for their helpful discussions and considerable suggestions. L. Guan is supported by National Key R\&D Program of China No. 2022YFA1007800, NSFC and Zhejiang Province  2022XHSJJ010.

	\end{document}

        \color{red}
        \section{New Theorems to be removed}      
        As suggested by Dmitry Kleinbock, we consider the following statements:

        \begin{theorem}\label{T:main3}
            Let $G=\SL_3(\RR),\Gamma=\SL_3(\ZZ),K=\SO_3(\RR)$, and $Y=K\backslash G/\Gamma$ be endowed with the natural locally homogeneous space structure. Then for any two points $y_1\neq y_2\in Y$, the set $$
            \{\xi\in S_{y_1}(Y): y_2\notin\overline{\gamma(y_1,\xi)}\}
            $$  is HAW on $S_{y_2}(Y)$.
        \end{theorem}
        
        \begin{theorem}\label{T:main4}
            Let $G=\SL_3(\RR),\Gamma=\SL_3(\ZZ)$, and $X=G/\Gamma$ be the corresponding homogeneous space. Then for any two points $x_1\neq x_2\in X$, the set $$
            \{[\bv]\in\bP(\fg): x_2\notin\overline{F_{[\bv]}x_1}\}
            $$  is HAW on $\bP(\fg)$.
        \end{theorem}

        For Theorem \ref{T:main3}, we write $y_i=[Kg_i\Gamma]\in Y$ and $x_i=[g_i\Gamma]\in X$ for $i=1,2$. Then it suffices to prove that the set $$
        \{\bv\in\fp^1:x_2\notin K\cdot \overline{\{\exp(t\bv)x_1\}_{t\geq 0}}\}
        $$
        is HAW on $\fp^1$. By our proof in Section 3, we only need to show that the set $$
        \{(\lambda,k)\in \Pi\times K:kx_1\in E(F^+_\lambda, Kx_2)\}
        $$
        is HAW on $\Pi\times K$. Since $Kx_1\neq Kx_2$, it follows from Proposition 5.1(1) in \cite{AGK2} that each slice of the above set is HAW on $K$. So the proof in our mind is based on the observation that this winning strategy is somehow ``locally constant''. We're convinced that this can be done in the same manner as this paper does. But the winning strategy in \cite{AGK2} is quite different from the one in \cite{AGK}, and we have to unravel it and play the game again... I personally figure that this is too lengthy and even worth another paper.

        For Theorem \ref{T:main4} (which is the original statement of Dmitry), there is a lot more to it than meets the eye. In fact, still by our proof in Section 3, it suffices to show that the set $$
        \{(\lambda,u_1,u_2)\in \Pi\times U\times U^-:(u_2u_1)x_1\in E(F^+_\lambda, (u_2u_1)x_2)\}
        $$
        is HAW on $\Pi\times U\times U^-$. However, it is even not clear why each slice of the above set is HAW on $U\times U^-$. We note that $U^-\cdot U$ is not a subgroup of $G$, which makes it not fit into the framework of \cite{AGK2}.

        \color{black}